\documentclass[a4paper,11pt]{amsart}

\usepackage[T1]{fontenc}
\usepackage[utf8]{inputenc}
\usepackage[margin=3cm]{geometry}

\usepackage{amsmath,amssymb,amsthm,mathtools}
\usepackage{mathrsfs}
\usepackage{enumitem}
\usepackage{tikz-cd}
\usepackage{xcolor}
\usepackage{hyperref}
\usepackage[nameinlink,noabbrev]{cleveref}
\usepackage{float}

\hypersetup{
  colorlinks=true,
  linkcolor=blue!60!black,
  citecolor=blue!60!black,
  urlcolor=blue!60!black
}

\newtheorem{theorem}{Theorem}[section]
\newtheorem{proposition}[theorem]{Proposition}
\newtheorem{lemma}[theorem]{Lemma}
\newtheorem{corollary}[theorem]{Corollary}

\theoremstyle{definition}

\newtheorem{example}[theorem]{Example}

\theoremstyle{remark}
\newtheorem{remark}[theorem]{Remark}

\newcommand{\C}{\mathbb{C}}
\newcommand{\R}{\mathbb{R}}
\newcommand{\Z}{\mathbb{Z}}
\newcommand{\PP}{\mathbb{P}}
\newcommand{\CP}{\mathbb{CP}}
\newcommand{\HP}{\mathbb{HP}}
\newcommand{\HH}{\mathbb{H}}

\newcommand{\SO}{\mathrm{SO}}
\newcommand{\SU}{\mathrm{SU}}
\newcommand{\U}{\mathrm{U}}
\newcommand{\SL}{\mathrm{SL}}

\newcommand{\PSU}{\mathrm{PSU}}
\newcommand{\PGL}{\mathrm{PGL}}

\newcommand{\Qtwotwo}{Q^{2,2}}

\newcommand{\Id}{\mathrm{Id}}
\newcommand{\Span}{\mathrm{Span}}
\newcommand{\diag}{\mathrm{diag}}

\newcommand{\rad}{\mathrm{rad}}

\newcommand{\Ker}{\mathrm{Ker}}

\newcommand{\norm}[1]{\left\lVert #1\right\rVert}

\title[Flat CR Twistor Model]{The Flat CR Twistor Model \(Q^{2,2}\) and Its Algebraic Sections
}

\author{Amedeo Altavilla}
\address{Dipartimento di Matematica, Università degli Studi di Bari Aldo Moro, Italy}
\email{amedeo.altavilla@uniba.it}

\author{Stefano Marini}
\address{Dipartimento di Scienze Matematiche, Fisiche e Informatiche, Università degli Studi di Parma, Italy}
\email{stefano.marini@unipr.it}

\date{}

\subjclass[2020]{32V05, 32V40, 53C30, 53C28}
\keywords{CR geometry, twistor spaces, hyperquadrics, holomorphic curves,
inversive distance, $j$-invariant quadrics, conformal geometry, Levi form}

\begin{document}

\begin{abstract}
We study the flat CR twistor model $Q^{2,2}\subset \mathbb{CP}^3$ by explicit projective methods. Using the anti-holomorphic involution $j$ associated with the twistor fibration, we classify the projective lines contained in $Q^{2,2}$ into twistor fibres and transverse lines, and relate the latter to round $2$-spheres in $S^3$ through an explicit incidence--tangency correspondence. We classify hyperplane sections under the twistor-compatible symmetry group $PSp(1,1)$ and describe the induced CR geometries on $S^3$. For smooth $j$-invariant quadric sections, we obtain a complete relative classification in terms of Coxeter's inversive distance and show that, in the disjoint case, the construction yields an explicit one-parameter family of globally defined real-analytic non-spherical Levi-nondegenerate CR structures on $S^3$.
\end{abstract}

\maketitle

 \setcounter{tocdepth}{1}
 \tableofcontents

\section*{Introduction}

The interplay between projective geometry and twistor theory has proved to be a
remarkably productive source of explicit results in differential geometry. Starting
from Penrose's original construction \cite{Penrose1967,Penrose1976}, the projective
realization of twistor spaces over anti-self-dual conformal $4$-manifolds has made it
possible to translate geometric questions about the base into concrete problems about
curves, and surfaces in $\CP^3$, yielding explicit classifications that would
be hard to obtain by purely differential-geometric methods
\cite{AtiyahHitchinSinger1978,Salamon1982,SalamonViaclovsky2009,Altavilla2025}.

An odd-dimensional analogue of this story is the $CR$ twistor construction
introduced by LeBrun \cite[Section~2]{LeBrun1984}: to every conformal Riemannian $3$-manifold
$(X,[g])$ one associates a $5$-dimensional $CR$ manifold of hypersurface type and
signature of the Levi form $(1,1)$, canonically fibred by Riemann spheres, whose $CR$ geometry
encodes the conformal geometry of the base. In the flat case, i.e. when $(X,[g])$ is the
round $3$-sphere, this construction produces the  hyperquadric
\[
\Qtwotwo
=
\bigl\{[z_0:z_1:z_2:z_3]\in\CP^3\;\big|\;
|z_0|^2+|z_1|^2=|z_2|^2+|z_3|^2\bigr\},
\]
the flat model of Levi-nondegenerate $CR$ geometry of Levi form signature $(1,1)$ in the
sense of Chern--Moser \cite{ChernMoser1974}. This object sits at the crossing of
three theories: twistor geometry, $CR$ geometry, and projective algebraic geometry,
and it is the central object of study of the present paper.

The $CR$ twistor construction has generated a rich geometry to study since LeBrun's
original article. Belgun studied normal $CR$ structures and their relation to the
twistor $CR$ manifold \cite{Belgun2001a,Belgun2001b}. Rossi extended LeBrun's
nonrealizability theory to higher dimensions \cite{Rossi1985} and use it to construct example of non realizable (in sense of \cite{AndreottiFredricks1979}) $CR$ manifold. More recently, Verbitsky \cite{Verbitsky2011} generalized LeBrun's construction
to \(7\)-dimensional \(G_2\)-manifolds, while Fiorenza--V\^an L\^e
\cite{FiorenzaLe2023} studied these constructions in general dimension with the use of
vector cross-product geometry, treating in particular the cases arising from \(G_2\)- and  \(\mathrm{Spin}(7)\)-structures.
Marugame \cite{Marugame2025} gave an interpretation of the construction via the
Fefferman metric, showing that the conformal geometry of the base is completely
encoded in the Fefferman conformal structure of the $CR$ twistor manifold.

On the projective-twistor side, the work of Salamon--Viaclovsky \cite{SalamonViaclovsky2009}
showed that $j$-invariant quadrics in $\CP^3$, the real quadrics in their
terminology, admit a complete classification by their discriminant loci, and that
this classification has rich geometric consequences for orthogonal complex structures
on domains in $\R^4$. More recently, \cite{Altavilla2022,Altavilla2025} developed
systematic projective techniques for twistor geometry in the flag threefold setting,
providing a dictionary between projective objects upstairs and conformal objects
downstairs that applies equally to the $\CP^3$ and flag models.

The purpose of the present article is to apply this projective philosophy
systematically to the flat $CR$ twistor model $\Qtwotwo\subset\CP^3$. The central
observation is that the projective realization of $\Qtwotwo$ makes it possible to
obtain \emph{explicit and complete} answers to several natural geometric questions, about its symmetry group, its holomorphic curves, its hyperplane and quadric
sections (see~\cite{Shafikov2025} and the references therein for a recent report on this topic). Moreover the $CR$ structures of these sections are induced on the base $3$-sphere, e.g. by reducing them to concrete linear and multilinear algebra in $\C^4$.
\par
The main results of the paper are the following.
\par
We show that the projective lines contained in $\Qtwotwo$ are in natural bijection
with the unitary group $U(2)$, and split into two twistorially distinct families:
the $j$-invariant lines (the twistor fibres, parametrized by $SU(2)\cong S^3$) and
the transverse lines (parametrized by $U(2)\setminus SU(2)$, grouped in
$j$-conjugate pairs). Every transverse line $\ell_A$ projects to a round $2$-sphere
in $S^3$, and the incidence geometry of lines in $\Qtwotwo$ translates exactly into
the tangency geometry of round $2$-spheres in $S^3$, two transverse lines meet if
and only if their spheres are tangent with compatible orientation, and a line meets
its $j$-conjugate if and only if the spheres are tangent with opposite orientation.
This gives a completely explicit twistor study of sphere geometry in $S^3$,
in the spirit of Shapiro \cite{Shapiro2013}.

We classify the hyperplane sections of $\Qtwotwo$ under the twistor-compatible
symmetry group $PSp(1,1)$: there are exactly three orbits, determined
by the sign of an invariant related to the restriction on the chosen hyperplane of the Hermitian form defining \(\Qtwotwo\).
 Non-tangent sections are
smooth spherical $CR$ $3$-spheres; tangent sections produce a singular Levi-flat
$CR$ structure on $S^3\setminus\{x\}$. The non-tangent case recovers the standard
spherical $CR$ geometry of $S^3$ via a global diffeomorphism with $\Qtwotwo$.

Lastly, we develop a complete relative classification of smooth $j$-invariant quadrics with
respect to the distinguished $3$-sphere $\Sigma\subset S^4$, using the inversive
distance of Coxeter \cite{Coxeter1966} as the key invariant. The discriminant locus
of each quadric \(\mathcal{S}\) is a geometric circle in $S^4$, and its position relative to $\Sigma$, contained, secant, tangent, or disjoint, determines the branch locus
$\Gamma_{\mathcal{S}}\subset\Sigma$ and hence the global domain of the induced
$CR$ geometry. In the disjoint case, the construction produces, for each
value of the inversive distance, a pair of globally defined conjugate real-analytic
Levi-nondegenerate $CR$ structures on $S^3$ that are generically non-spherical and
non-equivalent to the flat model. This yields an explicit
one-parameter family of non-spherical $CR$ structures on $S^3$, parametrized by a
conformal invariant  and constructed by entirely projective means.

\noindent
\textbf{Organization of the paper.}
Section~\ref{sec:background} recalls the background on twistor spaces, $CR$ manifolds,
LeBrun's construction, and the two ambient spaces $\CP^3$ and $\mathbb{F}$.
Section~\ref{sec:embedded-spheres} compares the two natural embedded models of $S^3$
and their twistor lifts.
Section~\ref{sec:explicit-CR} develops the explicit $CR$ geometry of $\Qtwotwo$
affine charts, local frames, contact form, Levi form.
Section~\ref{sec:symmetries} determines the full projective stabilizer of $\Qtwotwo$
and the twistor-compatible subgroup $PSp(1,1)$.
Section~\ref{sec:curves} classifies the holomorphic curves in $\Qtwotwo$ and
studies the relations between line geometry upstairs and sphere geometry in $S^3$.
Section~\ref{sec:hyperplane-sections} classifies hyperplane sections under $PSp(1,1)$ and
describes the induced $CR$ geometries on the base.
Section~\ref{sec:j-invariant-quadrics} develops the relative classification of smooth
$j$-invariant quadrics, introduces the inversive distance as the key invariant, and
constructs a one-parameter family of $CR$ structures on $S^3$.
\section{Background: twistor geometry, \texorpdfstring{$CR$}{CR} manifolds, and ambient spaces}
\label{sec:background}

This section fixes the geometric framework for the rest of the paper. The flat $CR$
twistor model $\Qtwotwo\subset\CP^3$ sits at the intersection of twistor geometry,
conformal geometry, and Levi--nondegenerate $CR$ geometry, and we briefly recall
the relevant background from each, the classical twistor-space construction for
Riemannian manifolds, the basic language of $CR$ manifolds, and LeBrun's
odd-dimensional twistor construction for conformal $3$-manifolds
\cite{AtiyahHitchinSinger1978,DragomirTomassini2006,LeBrun1984}. This will also
clarify why $\CP^3$ and the full flag threefold are the natural algebraic twistor
ambient spaces here \cite{Hitchin1981,Altavilla2025}.

\subsection{Twistor spaces in Riemannian geometry}
\label{TSRG}
Twistor theory provides a bridge between Riemannian geometry and complex geometry
by replacing geometric data on a real manifold with a space whose fibres parametrize
compatible complex structures
\cite{AtiyahHitchinSinger1978,BarNannicini1995,deBartolomeisNannicini1998}.
In its most classical form, given an oriented
Riemannian manifold \((N,g)\) of even real dimension \(2n\), one considers the bundle
\[
\pi: Z(N)\longrightarrow N
\]
whose fibre over a point \(p\in N\) consists of all endomorphisms
\[
J\in \mathrm{End}(T_pN)
\]
such that
\[
J^2=-\Id,
\qquad
g_p(Ju,Jv)=g_p(u,v)
\quad
\forall\,u,v\in T_pN.
\]
Thus the fibre \(\pi^{-1}(p)\) is the space of orthogonal complex structures on
\(T_pN\) compatible with the metric and the orientation.  Equivalently, \(Z(N)\) is
the bundle associated with the orthonormal frame bundle of \(N\) with typical fibre
\[
\SO(2n)/\U(n).
\]
This is the basic twistor bundle introduced in Riemannian geometry and will serve
as the starting point for the more specific constructions recalled below.

A particularly useful description is available in real dimension four.  In that case
the bundle of \(2\)-forms splits as
\[
\Lambda^2T^*N=\Lambda^+T^*N\oplus\Lambda^-T^*N,
\]
where \(\Lambda^\pm T^*N\) denote the self-dual and anti-self-dual summands.
If \(\omega\in \Lambda^+T_p^*N\) is a unit self-dual \(2\)-form, then \(\omega\)
determines an orthogonal complex structure \(J_\omega\) on \(T_pN\) by
\begin{equation}\label{eq:twistor-correspondence}
g_p(J_\omega u,v)=\omega(u,v),
\qquad u,v\in T_pN.
\end{equation}
In this way one obtains a natural identification
\[
Z(N)\cong S(\Lambda^+T^*N),
\]
where \(S(\Lambda^+T^*N)\) is the unit sphere bundle in \(\Lambda^+T^*N\) with
respect to the metric induced by \(g\)
\cite{BarNannicini1995,Salamon1982}.

\begin{proposition}
[\cite{AtiyahHitchinSinger1978}]
Let \((N,g)\) be an oriented Riemannian \(4\)-manifold. Then the correspondence
\eqref{eq:twistor-correspondence} gives a natural identification
\[
Z_p(N)\cong S(\Lambda^+T_p^*N)\cong S^2\cong \CP^1
\]
for every \(p\in N\).
\end{proposition}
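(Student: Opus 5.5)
This is pointwise linear algebra on the Euclidean space \(V=T_pN\), \(\dim V=4\), oriented and with inner product \(g_p\). The plan is to show that \(\omega\mapsto J_\omega\), defined by \eqref{eq:twistor-correspondence}, is a bijection from \(S(\Lambda^+V^*)\) onto the set of orientation-compatible orthogonal complex structures. Then I identify \(S(\Lambda^+V^*)\) with \(S^2\), and finally with \(\CP^1\).

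First I fix the normalization. The metric on \(\Lambda^2V^*\) is chosen so that \(e^1\wedge e^2\) has norm \(1\) for an oriented orthonormal coframe. Then the basis
\[
\omega_1=e^{12}+e^{34},\qquad \omega_2=e^{13}+e^{42},\qquad \omega_3=e^{14}+e^{23}
\]
of \(\Lambda^+V^*\) is orthogonal with \(|\omega_i|^2=2\). The unit sphere should therefore be taken for the rescaled metric \(\tfrac12|\cdot|^2\); equivalently, I use \(\omega/\sqrt2\)-normalized forms. This harmless convention is the only place where care is needed.

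Next comes the forward direction. Given \(\omega=\sum a_i\omega_i\) with \(\sum a_i^2=1\), the endomorphism \(J_\omega\) defined by \(g(J_\omega u,v)=\omega(u,v)\) is \(g\)-skew. Writing \(J_i=J_{\omega_i}\), a direct check shows that the \(J_i\) satisfy the quaternion relations \(J_i^2=-\Id\) and \(J_iJ_j=\pm J_k\), with \(J_iJ_j=-J_jJ_i\) for \(i\neq j\). Hence
\[
J_\omega^2=\Bigl(\sum a_iJ_i\Bigr)^2=-\Bigl(\sum a_i^2\Bigr)\Id=-\Id .
\]
Skewness together with \(J^2=-\Id\) gives orthogonality of \(J_\omega\). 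It remains to check that \(J_\omega\) induces the given orientation: the orientation is constant on the connected sphere, and \(J_1\) maps \(e_1\mapsto e_2\), \(e_3\mapsto e_4\), so \(J_1\) induces \((e_1,Je_1,e_3,Je_3)=(e_1,e_2,e_3,e_4)\), which is positive.

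The main step is the converse. Given an orthogonal complex structure \(J\) compatible with the orientation, I would pick a unit vector \(e_1\) and set \(e_2=Je_1\). Next I pick a unit vector \(e_3\perp\Span(e_1,e_2)\) and set \(e_4=Je_3\). Orthogonality of \(J\) makes \(e_4\perp e_1,e_2,e_3\), and compatibility with the orientation makes \((e_1,\dots,e_4)\) oriented. In this frame the fundamental form \(\omega_J=g(J\cdot,\cdot)\) equals \(e^{12}+e^{34}\). That form is self-dual with the normalized unit length, and \(J=J_{\omega_J}\) by construction. Thus the map is surjective, and injectivity is immediate since \(\omega\) is recovered from \(J\). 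The expected obstacle is only bookkeeping: sign conventions for \(*\), and checking that the reversed orientation would instead give anti-self-dual forms.

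Finally, \(\Lambda^+V^*\cong\R^3\) with the round metric, so its unit sphere is \(S^2\), which is diffeomorphic to \(\CP^1\) by stereographic projection. Moreover, the \(S^2\) carries the complex structure given by \(\omega\mapsto\) cross product with \(\omega\), and this agrees with the standard one on \(\CP^1\). Naturality of the identification holds because every choice made is \(\SO(4)\)-equivariant, so everything globalizes over \(N\).
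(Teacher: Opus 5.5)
The paper gives no proof of this proposition: it is quoted from Atiyah--Hitchin--Singer as background. There is therefore no internal argument to compare against, and your self-contained argument is correct.

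You take the elementary route. In an oriented orthonormal coframe you check that the endomorphisms $J_i$ attached to $\omega_1,\omega_2,\omega_3$ satisfy the quaternion relations, so that $J_\omega^2=-\Id$ on the normalized sphere. You then get surjectivity from a $J$-adapted frame in which $g(J\cdot,\cdot)=e^{12}+e^{34}$.

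The standard argument behind the citation is representation-theoretic instead. $\SO(4)$ acts on $\Lambda^+\oplus\Lambda^-$ through the two factors of $\SO(4)\to\SO(3)\times\SO(3)$, and the stabilizer of $e^{12}+e^{34}$ is $\U(2)$. Hence $\SO(4)/\U(2)\cong\SO(3)/\SO(2)\cong S^2$. In the spinor formulation the fibre is the projectivized half-spin space, which exhibits it directly as $\CP^1$ together with its complex structure.

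Your version buys explicitness, and it catches a normalization point that the statement glosses over. With the literal induced metric, for which $|e^{12}|=1$, a unit self-dual form gives $J_\omega^2=-\tfrac12\Id$. So the sphere really must be taken of radius $\sqrt2$, as you do. What your route does not give for free is the final identification $S^2\cong\CP^1$ as Riemann surfaces, which you only assert; the homogeneous or spinorial description yields it at once.

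Your orientation step can also be made continuity-free. For self-dual $\omega$ one has $\omega\wedge\omega=\omega\wedge *\omega=|\omega|^2\,\mathrm{vol}>0$. Now take unit $v$ and unit $w\perp\Span(v,J_\omega v)$. In the orthonormal frame $(v,J_\omega v,w,J_\omega w)$, the form $\omega$ has the shape $f^{12}+f^{34}$ for the dual coframe, so $\omega\wedge\omega$ is positive on that frame. Hence $J_\omega$ induces the given orientation.
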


This  picture is one of the main reasons why twistor theory is
especially effective in dimension four,  the fibres become \(2\)-spheres, and the
geometry of the twistor space is closely tied to the self-dual/anti-self-dual
decomposition of curvature.

From the point of view of complex geometry, twistor spaces become especially rigid in
low dimension.  In complex dimension three, and in the algebraic category, the
classical theory singles out two fundamental ambient models
\cite{Hitchin1981}.  
The first is the
projective space \(\CP^3\), which is the twistor space of the round \(4\)-sphere
\((S^4,g_{\mathrm{rnd}})\), identified with the quaternionic projective line
\(\HP^1\).  The twistor projection is the quaternionic  fibration
\begin{equation}\label{eq:Hopf-background}
\pi:\CP^3\longrightarrow \HP^1\simeq S^4,
\qquad
[z_0:z_1:z_2:z_3]\longmapsto [z_0+jz_1:z_2+jz_3],
\end{equation}
whose fibres are projective lines \(\CP^1\subset \CP^3\).
\par
The second is the full
flag threefold. Writing \(p=[p_0:p_1:p_2]\in\CP^2\) and \(\ell=[\ell_0:\ell_1:\ell_2]\in(\CP^2)^*\),
the incidence relation is \(\ell\cdot p:=\ell_0p_0+\ell_1p_1+\ell_2p_2=0\), so that the flag threefold is given by 
\[
\mathbb F=\{(p,\ell)\in \CP^2\times(\CP^2)^*:\ell\cdot p=0\},
\]

and the twistor projection 
\[
\pi_{\mathbb F}:\mathbb{F}
\to\CP^2,
\]
is given by
\begin{equation}\label{eq:flag-twistor-projection}
\pi_{\mathbb F}(p,\ell)=p^*\times\ell=
[\overline p_1\ell_2-\overline p_2\ell_1:
 \overline p_2\ell_0-\overline p_0\ell_2:
 \overline p_0\ell_1-\overline p_1\ell_0]
\end{equation}

This threefold appears as the twistor space over
\(\CP^2\) endowed with its Fubini--Study conformal class
\cite{BurstallRawnsley1990,Altavilla2025}.

These two ambient spaces will play a distinguished role throughout the paper.  On the
one hand, they provide the two natural complex projective settings in which the
embedded models relevant to us arise.  On the other hand, they already exhibit the two
main geometric features that will reappear later in the \(CR\) setting: a fibration by
rational curves, and an anti-holomorphic symmetry encoding the underlying real
geometry
\cite{AtiyahHitchinSinger1978,BurstallRawnsley1990,BurstallGuttRawnsley1993}.
\par
In the next subsections we shall pass from this even-dimensional
Riemannian picture to the odd-dimensional \(CR\) twistor geometry.

\subsection{$CR$ manifolds, Levi form, and flat models}

We now recall the basic notions from $CR$ geometry that will be used throughout
the paper. Since our main examples are real hypersurfaces in complex threefolds,
we shall freely pass between the abstract and the embedded points of view
\cite{DragomirTomassini2006,ChenShaw2001}.

A \emph{$CR$ structure} on a smooth manifold \(M\) of real dimension \(2n+k\)
is given by a rank-\(n\) complex subbundle
\[
H^{0,1}M\subset \C TM
\]
such that
\[
H^{0,1}M\cap \overline{H^{0,1}M}=\{0\},
\qquad
[\Gamma(H^{0,1}M),\Gamma(H^{0,1}M)]\subset \Gamma(H^{0,1}M).
\]
The pair \((M,H^{0,1}M)\) is then called a \(CR\) manifold of type \((n,k)\),
where \(n\) is the \(CR\) dimension and \(k\) the \(CR\) codimension.
Equivalently, one may describe the same structure by means of a rank-\(2n\)
real subbundle
\[
H_{\R}M=\Re\bigl(H^{1,0}M\oplus H^{0,1}M\bigr)\subset TM
\]
together with a complex structure
\[
J_M:H_{\R}M\to H_{\R}M,
\qquad
J_M^2=-\Id,
\]
whose \((-i)\)-eigenspace in \(\C\otimes H_{\R}M\) is \(H^{0,1}M\). In this
language, the integrability condition amounts to the vanishing of the Nijenhuis
tensor of \(J_M\) along the distribution \(H_{\R}M\).

When \(k=1\)  then \(M\) is said to be of
\emph{hypersurface type}; this is the one most relevant case, for historical reason.  If \(M\) is realized as a real hypersurface in a complex
manifold \((\widehat M,J)\), its induced \(CR\) structure is given by
\[
H_{\R}M=TM\cap J(TM),
\qquad
H^{0,1}M=\C TM\cap T^{0,1}\widehat M.
\]
Thus here the \(CR\) structure records the maximal complex subspaces of the tangent
spaces of \(M\). In the abstract setting, not every \(CR\) manifold is a priori
realizable in this way, but for the geometric problems considered here, since the construction is algebraic, it is in particular analytic, and  the embedded
picture will  serve as the 
model
\cite{AndreottiFredricks1979}.

A fundamental first-order invariant of a \(CR\) manifold is its \emph{Levi form} (see \cite{MariniMedoriNacinovich2021}). Let
\[
pr:\C TM\longrightarrow \C TM/(H^{1,0}M\oplus H^{0,1}M)
\]
be the quotient map. If \(L_1,L_2\in \Gamma(H^{1,0}M)\), the Levi form is defined by
\begin{equation}\label{eq:Levi-background}
\mathcal L(L_1,\overline{L}_2)
=
-\frac{i}{2}\,pr\bigl([L_1,\overline{L}_2]\bigr).
\end{equation}
This is a Hermitian form with values in the complex vector bundle
\[
\C TM/(H^{1,0}M\oplus H^{0,1}M).
\]
Its kernel is called the \emph{Levi kernel}; when this kernel is trivial, the \(CR\) structure
is called \emph{Levi nondegenerate}. In the hypersurface case, if \(\theta\) is a local
real \(1\)-form such that
\[
H_{\R}M=\ker\theta,
\]
then \eqref{eq:Levi-background} may be written in the familiar form
\[
\mathcal L(L_1,\overline{L}_2)
=
-\frac{i}{2}\,d\theta(L_1,\overline{L}_2).
\]
In particular, the signature of the Levi form is a local invariant of the
\(CR\) geometry.
For Levi--nondegenerate \(CR\) manifolds of hypersurface type, the local equivalence problem was solved by Cartan in dimension three and, in higher dimensions, by Tanaka's theory of algebraic prolongation and by the Chern--Moser normal form theory \cite{Tanaka1962,ChernMoser1974} ( see also
\cite{MariniMedoriNacinovich2020a} for algebraic prolongation).
A Levi--nondegenerate \(CR\) manifold of \(CR\) dimension \(n\), with Levi
signature \((p,q)\), \(p+q=n\), carries a canonical Cartan geometry of type
\[
(\PSU(p+1,q+1),P),
\]
where \(P\) is a real parabolic that stabilize  a complex null line. The corresponding
homogeneous space
\[
Q^{p,q}\cong \PSU(p+1,q+1)/P
\]
is the flat model of Levi--nondegenerate \(CR\) geometry of signature
\((p,q)\).

Concretely, it is realized as the projectivized null cone of a Hermitian
form of signature \((p+1,q+1)\), namely as the hyperquadric
\[
Q^{p,q}
=
\Bigl\{
[z_0:\cdots:z_{n+1}]\in \CP^{n+1}
\ \Big|\
|z_0|^2+\cdots+|z_p|^2
-|z_{p+1}|^2\cdots-|z_{n+1}|^2=0
\Bigr\}.\]
Its \(CR\) curvature vanishes identically, and conversely every Levi--nondegenerate
\(CR\) manifold of hypersurface type is locally equivalent to \(Q^{p,q}\) if and only if its canonical
Cartan curvature vanishes
\cite{Tanaka1962,ChernMoser1974}.

In the present article we are mainly interested in real dimension \(5\). In this case a non degenerate Levi form is either definite, of signature
\((2,0)\), or \emph{split}, of signature \((1,1)\). The corresponding flat models are the
two homogeneous hyperquadrics associated respectively with \(\SU(3,1)\) and
\(\SU(2,2)\), i.e. our main object \(\Qtwotwo\subset \CP^3\) of study.

\subsection{LeBrun's twistor $CR$ construction in dimension three}

We now turn our attention to the odd-dimensional twistor construction that is most relevant for
this paper, namely the one introduced by LeBrun for conformal Riemannian
\(3\)-manifolds \cite{LeBrun1984}.  This construction associates with every conformal
\(3\)-manifold \((\Sigma,[g])\) a natural \(5\)-dimensional \(CR\) manifold
\(M\), namely the projectivized bundle of nonzero complex null covectors.  It may be viewed as a
three-dimensional analogue of the classical twistor correspondence for
self-dual conformal \(4\)-manifolds.

Let \(g\) be a representative of the conformal class \([g]\). The metric \(g\)
induces a dual symmetric bilinear form \(g^{-1}\) on \(T^*\Sigma\), given in local
coordinates by the inverse matrix. We extend
\(g^{-1}\) by \(\C\)-bilinearity to a complex symmetric bilinear form on the
complexified cotangent bundle \(\C T^*\Sigma = T^*\Sigma\otimes_{\R}\C\).  
Consider the bundle of nonzero complex null covectors
\[
\widehat M
:=
\bigl\{
\zeta\in \C T^*\Sigma\setminus\{0\}
\;\big|\;
g^{-1}(\zeta,\zeta)=0
\bigr\}.
\]
Since the nullity condition is homogeneous, \(\widehat M\) is invariant under the
natural \(\C^*\)-action by fibrewise scalar multiplication, and we may form its
projectivization
\[
M
:=
\widehat M/\C^*
\subset
\PP(\C T^*\Sigma).
\]
Let
\[
\rho:\C T^*\Sigma\longrightarrow \Sigma
\]
denote the bundle projection. Since \(\widehat M\subset \C T^*\Sigma\setminus\{0\}\)
is \(\C^*\)-invariant, the restriction \(\rho|_{\widehat M}\) descends to a smooth map
\[
\pi_{CR}:M\longrightarrow \Sigma,
\qquad
\pi_{CR}([\zeta])=x
\quad\text{whenever } \zeta\in \C T_x^*\Sigma.
\]
Equivalently, \(\pi_{CR}\) is the restriction to \(M\subset \PP(\C T^*\Sigma)\) of the
natural projective-bundle projection
\[
\PP(\C T^*\Sigma)\longrightarrow \Sigma.
\]
We shall call \(M\) the \(CR\) twistor space associated with the conformal
\(3\)-manifold \((\Sigma,[g])\), and \(\pi_{CR}\) its twistor \(CR\) fibration.
For \(x\in\Sigma\), we write
\[
M_x:=\pi_{CR}^{-1}(x)
=
\PP\Bigl(
\{\zeta\in \C T_x^*\Sigma\setminus\{0\}\mid g_x^{-1}(\zeta,\zeta)=0\}
\Bigr).
\]
\begin{proposition}[{\cite[Section~1]{LeBrun1984}}]\label{prop:CR-twistor-basic-features}
Let \((\Sigma,[g])\) be a conformal Riemannian \(3\)-manifold, let
\[
M=\widehat M/\C^*
\subset \PP(\C T^*\Sigma)
\]
be its twistor \(CR\) space, and let
\[
\pi_{CR}:M\longrightarrow \Sigma
\]
be the associated twistor \(CR\) fibration. Then:
\begin{enumerate}
\item[\rm(i)] $M$ depends only on the conformal class $[g]$;
\item[\rm(ii)] for every \(x\in \Sigma\), the fibre
\[
M_x=\pi_{CR}^{-1}(x)
\]
is a smooth plane conic in \(\PP(\C T_x^*\Sigma)\cong \CP^2\), hence biholomorphic
to \(\CP^1\);
\item[\rm(iii)] in particular, $M$ is canonically foliated by Riemann spheres.
\end{enumerate}
\end{proposition}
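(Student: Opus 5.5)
Each of the three assertions reduces to linear algebra on a single fibre \(\C T^*_x\Sigma\), together with a check that the fibrewise objects glue smoothly over \(\Sigma\). I would treat the items in order.

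For (i), take a second representative \(\tilde g=e^{2f}g\) of \([g]\). Then \(\tilde g^{-1}=e^{-2f}g^{-1}\) on \(T^*\Sigma\), and the same relation holds for the \(\C\)-bilinear extensions. Since \(e^{-2f}\) vanishes nowhere, the null cones agree:
\[
\{\tilde g^{-1}(\zeta,\zeta)=0\}=\{g^{-1}(\zeta,\zeta)=0\}.
\]
Hence \(\widehat M\) is the same subset of \(\C T^*\Sigma\), and so are its quotient \(M\) by the \(\C^*\)-action and the projection \(\pi_{CR}\). No choice beyond the conformal class enters the construction.

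For (ii), fix \(x\) and choose a \(g_x\)-orthonormal coframe \(e^1,e^2,e^3\) of \(T_x^*\Sigma\). Writing \(\zeta=\sum\zeta_ie^i\) with \(\zeta_i\in\C\), we get \(g_x^{-1}(\zeta,\zeta)=\zeta_1^2+\zeta_2^2+\zeta_3^2\). So \(M_x\) is the conic
\[
\{\zeta_1^2+\zeta_2^2+\zeta_3^2=0\}\subset\CP^2 .
\]
This quadratic form has identity matrix, so it is nondegenerate, and the conic is smooth by the Jacobian criterion: the gradient \(2(\zeta_1,\zeta_2,\zeta_3)\) vanishes only at the origin. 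A smooth plane conic is biholomorphic to \(\CP^1\). To make this explicit, I would use the parametrization
\[
[s:t]\mapsto[s^2-t^2:\,i(s^2+t^2):\,2st],
\]
check by direct substitution that it lands on the conic, and check that it is injective with nowhere-vanishing differential.

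For (iii), I would first show that \(M\) is a smooth submanifold and \(\pi_{CR}\) is a submersion. Over a chart \(U\subset\Sigma\), pick a smooth local orthonormal coframe. This trivializes
\[
\PP(\C T^*\Sigma)|_U\cong U\times\CP^2,
\]
and under this trivialization \(M|_U\) becomes \(U\times C\), where \(C\) is the fixed conic above. So \(M\) is a smooth fibre bundle over \(\Sigma\) with fibre \(\CP^1\), locally a product. Its fibres \(M_x\) are disjoint and cover \(M\), and they form the leaves of the foliation given by the fibration. Each leaf is a Riemann sphere, with the complex structure inherited from \(\PP(\C T_x^*\Sigma)\), by (ii).

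The one point needing care is intrinsicality. The complex structure on each fibre must not depend on the chosen coframe. Changing the orthonormal coframe acts by \(O(3)\subset O(3,\C)\), which preserves the conic and acts on \(\CP^2\) by projective linear, hence holomorphic, maps. Changing the representative metric rescales the coframe by a positive function, which fixes the conic and induces the identity on \(\CP^2\). So the structure is canonical, and the result follows.
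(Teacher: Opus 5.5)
Your proposal is correct. The paper gives no proof of its own here (it simply cites LeBrun), and your fibrewise reduction is exactly the standard argument behind that citation: an orthonormal coframe turning $g_x^{-1}$ into $\zeta_1^2+\zeta_2^2+\zeta_3^2$, smoothness of this nondegenerate conic, and local coframe trivializations $M|_U\cong U\times C$ glued by $O(3)$-valued transition maps. The only small omission is that your explicit parametrization must also be surjective onto the conic. This follows either because an injective immersion of $\CP^1$ into the connected conic has open and closed image, or directly by recovering $s^2=(X-iY)/2$, $t^2=-(X+iY)/2$, $st=Z/2$ from a point $[X:Y:Z]$ of the conic.
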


The foliation by projective lines is one of the structural features of the
construction, and not merely an auxiliary by-product.  
\begin{remark}
In Section~\ref{TSRG} we recalled that the twistor fibre at a point
parametrizes orthogonal complex structures on the whole tangent space.  In
LeBrun's \(3\)-dimensional setting, the same idea survives on the two dimensional subspaces of \(T_x\Sigma\).

Indeed, let \(x\in \Sigma\) and let \([\zeta]\in M_x\).  Writing
\[
\zeta=\alpha+i\beta,
\qquad
\alpha,\beta\in T_x^*\Sigma,
\]
the nullity condition \(g^{-1}(\zeta,\zeta)=0\) is equivalent to
\[
g^{-1}(\alpha,\alpha)=g^{-1}(\beta,\beta),
\qquad
g^{-1}(\alpha,\beta)=0.
\]
Thus the duals \(\alpha^\sharp\) and \(\beta^\sharp\) span an oriented conformal
\(2\)-plane
\[
\xi_{[\zeta]}:=\Span(\alpha^\sharp,\beta^\sharp)\subset T_x\Sigma,
\]
where the orientation is the one determined by the ordered pair
\((\alpha^\sharp,\beta^\sharp)\).  The restriction of \(g\) to
\(\xi_{[\zeta]}\) defines a conformal class of Euclidean metrics on this plane,
and the chosen orientation determines, in the usual way, an almost complex
structure $J_{[\zeta]}\colon \xi_{[\zeta]}\to \xi_{[\zeta]}.$

In details,  if \(h_{[\zeta]}\) denotes the restriction of \(g\) to
\(\xi_{[\zeta]}\) and \(\omega_{[\zeta]}\) the positively oriented area form on
\(\xi_{[\zeta]}\), then \(J_{[\zeta]}\) is characterized by
\[
h_{[\zeta]}(J_{[\zeta]}u,v)=\omega_{[\zeta]}(u,v),
\qquad
u,v\in \xi_{[\zeta]}.
\]
In a positively oriented \(h_{[\zeta]}\)-orthonormal basis \((e_1,e_2)\) of
\(\xi_{[\zeta]}\), this simply means that
\[
J_{[\zeta]}e_1=e_2,
\qquad
J_{[\zeta]}e_2=-e_1.
\]

\end{remark}
The \(CR\) structure on \(M\) is induced from the tautological geometry of the
cotangent bundle.  Let \(\alpha\) denote the tautological \(1\)-form on
\(\C T^*\Sigma\), restricted to \(\widehat M\), and set
\[
\omega:=d\alpha.
\]
Then one considers the complex distribution
\[
H^{0,1}\widehat M:=\Ker \omega\cap \C T\widehat M.
\]
This distribution is \(\C^*\)-invariant, so it descends to a complex
rank-\(2\) distribution \(H^{0,1} M\) on the quotient \(M\).  LeBrun proved that this descended
distribution defines a nondegenerate \(CR\) structure on \(M\), of hypersurface type
and  signature of the Levi form \((1,1)\).  Moreover, the \(CR\) structure depends only on the conformal class
\([g]\), not on the chosen representative.

\begin{proposition}[{\cite[Theorem~1.2]{LeBrun1984}}]\label{prop:twistor-Levi-signature-background}
Let $(M,H^{0,1}M)$ be the twistor $CR$ manifold associated with a conformal
Riemannian $3$-manifold $(\Sigma,[g])$. Then $M$ is a Levi--nondegenerate
$CR$ manifold of hypersurface type, and its Levi form has signature
\((1,1)\) at every point.
\end{proposition}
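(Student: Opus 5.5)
Since the statement is local and depends only on the conformal class, the plan is to work pointwise in local coordinates on $\widehat M$ and then descend along the $\C^*$-action. First fix $x\in\Sigma$ and choose local coordinates $(x^1,x^2,x^3)$ in which the representative $g$ is orthonormal at $x$. Use fibre coordinates $\zeta=\zeta_a\,dx^a$ on $\C T^*\Sigma$, so that $\alpha=\zeta_a\,dx^a$ and $\omega=d\zeta_a\wedge dx^a$, with $\widehat M$ cut out by $Q(x,\zeta):=g^{ab}(x)\zeta_a\zeta_b=0$. Now $\widehat M$ has real dimension $3+4=7$. We have $\omega|_{\widehat M}=d\zeta_a\wedge dx^a$ restricted to $\{Q=0\}$, and we compute its kernel inside $\C T\widehat M$. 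A complex vector $X=u^a\partial_{x^a}+v_a\partial_{\zeta_a}+w_a\partial_{\bar\zeta_a}$ tangent to $\widehat M$ must satisfy $dQ(X)=0$ and $d\bar Q(X)=0$. The condition $\iota_X\omega|_{T\widehat M}=0$ then forces $u^a$ to be a multiple of $g^{ab}\zeta_b$, i.e.\ $u=\lambda\,\zeta^\sharp$, and forces $v$ to be the corresponding Hamiltonian lift, up to the Euler field $\zeta_a\partial_{\zeta_a}$. It also leaves $w$ constrained only by tangency, $\bar\zeta^{\sharp}\cdot w=0$ at the orthonormal point.

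The key step is identifying this kernel modulo the $\C^*$-directions. Two vectors survive the quotient by the real span of the Euler field and its $J$-image. The first is the complexified geodesic-spray direction $\zeta^a\partial_{x^a}+\dots$, which is of type $(1,0)$ in the base direction. The second is a fibre direction $w$ with $\bar\zeta\cdot w=0$ and $w\not\parallel\bar\zeta$, i.e.\ an antiholomorphic tangent vector to the conic $M_x$. From this we get rank $2$ for $H^{0,1}M$ on the $5$-manifold $M$, so the structure is of hypersurface type ($2\cdot2+1=5$). The condition $H\cap\bar H=0$ follows because $\zeta^\sharp$ and $\bar\zeta^\sharp$ are linearly independent real-wise: $\zeta=\alpha+i\beta$ with $\alpha,\beta$ orthogonal and of equal nonzero length. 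Involutivity comes for free, since $H^{0,1}\widehat M$ is the kernel of a closed form intersected with the tangent bundle. I will take the statements on integrability and conformal invariance as in the cited construction, and focus on the Levi form.

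For the signature, choose the contact form $\theta$ on $M$ to be the descended, suitably normalized imaginary part of $\alpha$ paired against the spray, and compute $d\theta$ on the basis $\{L_1,L_2\}$ of $H^{1,0}$. Here $L_1$ is the horizontal spray direction and $L_2$ is the fibre direction. We will have $\mathcal L(L_2,\bar L_2)=0$, because the fibre $M_x$ is a complex curve contained in $M$, so its tangent lies in $H_\R M$ and $[L_2,\bar L_2]$ is vertical inside the fibre. Next, the pairing of the spray with itself gives $\mathcal L(L_1,\bar L_1)\propto -i\,g^{-1}(\zeta,\bar\zeta)$ up to sign. The real vector $[L_1,\bar L_1]$ moves the base point along $\beta^\sharp$, transverse to the contact plane, so this entry is a nonzero real multiple of $|\alpha|^2$. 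The off-diagonal term $\mathcal L(L_1,\bar L_2)$ is nonzero, because differentiating the spray in the fibre direction rotates the $2$-plane $\xi_{[\zeta]}$ out of itself.

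The resulting Hermitian matrix has the form $\begin{pmatrix} a & b\\ \bar b & 0\end{pmatrix}$ with $b\neq0$. Its determinant is $-|b|^2<0$, so it is nondegenerate of signature $(1,1)$. This shape argument is robust: it only needs the zero diagonal entry and the nonvanishing of $b$. The main obstacle is rigorously proving $b\neq0$ with correct normalization. I would do this by verifying it at one point of the flat model $\R^3$, where the computation is explicit, e.g.\ $\zeta=(1,i,0)$ and fibre direction $(0,0,1)$-deformation. Then I would argue that the value of $b$ at the centre of normal coordinates depends only on the first-order jet of $g$, which is flat there. Finally, conformal rescaling multiplies $\theta$ by a positive function, which preserves signature. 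This gives the claim at every point.
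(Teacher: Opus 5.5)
The paper does not prove this proposition; it is quoted from LeBrun. The only Levi-form computation the paper carries out itself is the extrinsic affine-chart computation for the flat model $Q^{2,2}$ in Section~\ref{sec:explicit-CR}, with determinant $-1/(4|u_3|^2)$. Your intrinsic route is genuinely different and covers an arbitrary $(\Sigma,[g])$, not only the flat model. Since the twistor fibres are complex curves in $M$, the fibre direction $L_2$ is Levi-null. Nondegeneracy and signature $(1,1)$ then both reduce to the single claim $b=\mathcal L(L_1,\overline L_2)\neq0$, because $\det\begin{pmatrix}a&b\\ \bar b&0\end{pmatrix}=-|b|^2$ whatever $a$ is. Your reduction of that claim to flat $\R^3$ is also sound. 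The form $-\frac i2\,d\theta$ is tensorial, $\theta$ depends on $g$ only pointwise, and $H^{1,0}$ over $x$ depends only on $g(x)$ and $\partial g(x)$ (through $\partial_aQ$).

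As written, however, your contact form is wrong and the decisive step is not carried out.

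\textbf{The contact form.} Write $\zeta=\mu+i\nu$. The form $\operatorname{Im}\alpha=\nu_a\,dx^a$ is not $\C^*$-invariant, and it does not annihilate $H^{0,1}$: on the flat spray direction $\zeta^a\partial_{x^a}\in H^{0,1}\widehat M$ it equals $i|\nu|^2\neq0$. Take instead the horizontal form $\theta=c_a\,dx^a$ with $c=*_g(\mu\wedge\nu)/|\mu|_g^2$, the unit conormal of $\xi_{[\zeta]}$. This is the sphere-bundle contact form the paper alludes to. It kills the spray because $c\perp\mu,\nu$, and it kills the fibres because it is horizontal.

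\textbf{The diagonal entry.} In the flat model take $L_1=\bar\zeta^a\partial_{x^a}$ and $L_2=\bar w_a\partial_{\zeta_a}$ with $g^{-1}(\bar\zeta,w)=0$ and $w\not\parallel\bar\zeta$. Then $[L_1,\overline L_1]=0$, so $a=0$. Your claim that $[L_1,\overline L_1]$ moves the base point along $\beta^\sharp$ is therefore false, although your argument never uses it.

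\textbf{The off-diagonal entry.} One has $d\theta(L_1,\overline L_2)=-\bar\zeta^aw_b\,\partial c_a/\partial\bar\zeta_b$. Differentiating the identity $c_a\bar\zeta^a\equiv0$ along $w_b\partial_{\bar\zeta_b}$ turns this into $d\theta(L_1,\overline L_2)=c_aw^a$. Since $\{w:g^{-1}(\bar\zeta,w)=0\}=\Span_\C(\bar\zeta,c)$, this is nonzero at every point. No homogeneity argument is needed; your sample $\zeta=(1,i,0)$, $w=(0,0,1)$ gives $c_aw^a=1$.

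\textbf{A minor slip.} $\zeta_a\partial_{\zeta_a}\notin\ker(\omega|_{\widehat M})$, since its contraction is $\alpha\notin\Span(dQ,d\overline Q)$. The rank-$3$ kernel contains $\bar\zeta_a\partial_{\bar\zeta_a}$ instead, which leaves your rank count intact.
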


There is also a useful alternative description when \(\Sigma\) is oriented.  After fixing a
metric \(g\in[g]\), the volume form identifies \(T\Sigma\) with \(\Lambda^2T^*\Sigma\), and the
space of null complex covectors can be identified with the unit sphere bundle
\[
S\Sigma=\{v\in T\Sigma\mid g(v,v)=1\}.
\]
Under this correspondence, the fibres of the twistor projection remain \(2\)-spheres,
and the orthogonal complement of the \(CR\) distribution is generated by the standard
contact form of the sphere bundle.

The construction captures the conformal geometry of the base in a precise rigid way: LeBrun showed
that realizability of the twistor $CR$ manifold (i.e.\ the existence of a local $CR$
embedding into a complex manifold) is closely tied to the real-analyticity of the
conformal structure \cite{LeBrun1984}.

Indeed, if \((\Sigma,[g])\) is locally conformally flat, then its twistor \(CR\) manifold is
locally \(CR\)-equivalent to the split hyperquadric
\[
\Qtwotwo
=
\bigl\{
[z_0:z_1:z_2:z_3]\in \CP^3
\;\big|\;
|z_0|^2+|z_1|^2-|z_2|^2-|z_3|^2=0
\bigr\},
\]
endowed with the \(CR\) structure induced from \(\CP^3\).  Conversely, the flatness
of the twistor \(CR\) structure reflects the conformal flatness of the base
\cite{Marugame2025}.  In this sense, \(\Qtwotwo\) is not merely a convenient
explicit example but it is the intrinsic flat $CR $ model in three-dimensional twistor geometry.

\begin{proposition}\label{prop:local-flatness-twistor}
Let $(\Sigma,[g])$ be a conformal Riemannian $3$-manifold, and let
$(M,H^{0,1}M)$ be its twistor $CR$ space. Then $M$ is locally
$CR$-equivalent to $\Qtwotwo$ if and only if $(\Sigma,[g])$ is locally
conformally flat.
\end{proposition}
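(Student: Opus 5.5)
We prove the two implications separately, keeping the characterization of the flat model recalled above (Tanaka, Chern--Moser) as the organizing principle.

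\emph{Sufficiency.} Assume $(\Sigma,[g])$ is locally conformally flat. Near any point there is a conformal diffeomorphism $\phi:U\to V$ onto an open subset of the round $S^3$. By \Cref{prop:CR-twistor-basic-features}(i) and the conformal invariance of the $CR$ structure, the induced map on projectivized null covectors, $[\zeta]\mapsto[(\phi^{-1})^*\zeta]$, is a $CR$ diffeomorphism $\pi_{CR}^{-1}(U)\to\pi_{CR}^{-1}(V)$. This holds because the tautological form and hence $H^{0,1}$ are natural under diffeomorphisms, and nullity is conformally invariant. So it suffices to show that the twistor $CR$ space of the round $S^3$ is $CR$-equivalent to $\Qtwotwo$.

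For this we use the conformal group. $\mathrm{Conf}(S^3)\cong \SO(4,1)$ acts transitively on $S^3$, and the isotropy $\mathrm{CO}(3)$ acts transitively on each null conic $M_x$, so it acts transitively on $M$ by $CR$ automorphisms. Through the spin covering $Sp(1,1)\to\SO_0(4,1)$, the group $Sp(1,1)\subset\SU(2,2)$ acts on $\CP^3$. It preserves $\Qtwotwo$ and also the $3$-sphere $\Sigma\subset S^4=\HP^1$ that is the image of $\Qtwotwo$ under \eqref{eq:Hopf-background}. We then build an explicit equivariant map $\Qtwotwo\to M$ sending a point $z$ over $x\in\Sigma$ to the null covector at $x$ determined by the complex structure $J_z$ of the twistor fibre, restricted to the oriented $2$-plane $T_x\Sigma\cap J_z T_x\Sigma$. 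Equivariance and transitivity reduce the check that this map is a $CR$ diffeomorphism to a single point, where it becomes a linear-algebra computation. Alternatively, one can note that both spaces are homogeneous Levi-nondegenerate of signature $(1,1)$ with the same $5$-dimensional orbit structure, and that the Cartan curvature of a $CR$ space on which $\SO(4,1)$ acts with the right isotropy vanishes. The basepoint verification is the main computational obstacle in this direction.

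\emph{Necessity.} Assume $M$ is locally $CR$-equivalent to $\Qtwotwo$. Then its Chern--Moser curvature vanishes. The goal is to show that the Cotton tensor of $g$ (the complete conformal invariant in dimension $3$) appears as a component of this curvature. Following LeBrun and Marugame \cite{Marugame2025}, we compute the Fefferman conformal structure of $M$. It is a conformal structure on a circle bundle over $M$, and its Weyl tensor is expressed through $\pi_{CR}^*$ of the Cotton tensor, which is nonzero wherever the Cotton tensor is nonzero. Vanishing $CR$ curvature forces the Fefferman metric to be conformally flat, hence the Cotton tensor vanishes, hence $(\Sigma,[g])$ is locally conformally flat.

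We expect this curvature identification to be the real difficulty. If the Fefferman route is too heavy, the fallback is a more direct argument. The fibres $M_x$ are chains or holomorphic spheres that are intrinsically determined by the $CR$ structure, because they are the leaves of the canonical foliation of \Cref{prop:CR-twistor-basic-features}(iii). A local $CR$ equivalence with $\Qtwotwo$ should therefore carry fibres to the $j$-invariant lines of $\Qtwotwo$. It then descends to a local diffeomorphism $\Sigma\supset U\to S^3$ that preserves the incidence of the surfaces swept out by transverse holomorphic curves, that is, the round $2$-spheres. A diffeomorphism preserving spheres is conformal by a Liouville-type argument, which gives local conformal flatness.
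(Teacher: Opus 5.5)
Your argument is correct, and for necessity it is exactly the paper's. The paper's proof combines three ingredients: Marugame's theorem that the Fefferman conformal structure of $M$ is conformally flat iff $(\Sigma,[g])$ is; Tanaka--Chern--Moser (vanishing $CR$ curvature iff local equivalence with $\Qtwotwo$); and LeBrun's signature result. It uses that single equivalence for \emph{both} implications, implicitly relying, as you do, on the fact that Fefferman conformal flatness is equivalent to $CR$ flatness.

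Where you differ is sufficiency. You first use naturality of the construction under local conformal diffeomorphisms to reduce to the round $S^3$. You then identify its twistor $CR$ space with $\Qtwotwo$ through an $Sp(1,1)$-equivariant map $z\mapsto T_x\Sigma\cap J_zT_x\Sigma$, checked at one point. This is sound: $Sp(1,1)$ acts transitively on $\Qtwotwo$ and both $CR$ structures are invariant, so the comparison reduces to a single tangent space. Your route avoids the Fefferman machinery in that direction. It also proves the global identification of the round twistor $CR$ space with $\Qtwotwo$, which the paper asserts in Section~\ref{sec:embedded-spheres} and uses in Corollary~\ref{cor:global-flatness-twistor} without proof. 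The paper's route is shorter and symmetric, but it rests entirely on Marugame's curvature computation.

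Two side remarks should not be relied on.

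\emph{The ``alternatively'' clause.} Homogeneity of a signature-$(1,1)$ structure does not by itself force flatness. You would need an extra input, e.g.\ that a symmetry algebra of dimension $\dim\mathfrak{so}(4,1)=10$ exceeds the submaximal symmetry dimension for Levi-nondegenerate $CR$ $5$-manifolds.

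\emph{The fallback for necessity.} This fails at its first step, because the fibres are not determined by the $CR$ structure when $M$ is flat.
\begin{itemize}
\item $\PSU(2,2)$ acts transitively on the whole $U(2)$-family of lines in $\Qtwotwo$; for instance $\diag(I,e^{i\theta}I)$ maps $\ell_U$ to $\ell_{e^{i\theta}U}$.
\item Through each point of $\Qtwotwo$ passes a circle of lines.
\item Hence a local $CR$ equivalence only sends the fibres to some $3$-parameter family of lines. In terms of $U(2)$ viewed as compactified Minkowski space, this is a spacelike hypersurface $\Sigma'$, not necessarily the $j$-invariant family $SU(2)$.
\item The surfaces swept out by transverse holomorphic curves then project to light-cone cuts of $\Sigma'$. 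Showing that these are round, or totally umbilic for $[g]$, requires an argument the fallback does not supply.
\end{itemize}
So the Fefferman route is the one to keep.
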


\begin{proof}
By \cite[Theorem~4.8]{Marugame2025}, the Fefferman conformal structure of
$M$ is conformally flat if and only if $(\Sigma,[g])$ is conformally flat. By
Tanaka--Chern--Moser theory \cite{Tanaka1962,ChernMoser1974}, local $CR$
flatness for a Levi--nondegenerate $CR$ manifold with Levi form of signature $(1,1)$ is
equivalent to local $CR$ equivalence with $\Qtwotwo$. The conclusion follows
from Proposition~\ref{prop:twistor-Levi-signature-background}.
\end{proof}

\begin{corollary}\label{cor:global-flatness-twistor}
Let $(\Sigma,[g])$ be a compact simply connected locally conformally flat conformal
Riemannian $3$-manifold. Then its twistor $CR$ space is globally
$CR$-diffeomorphic to the hyperquadric $\Qtwotwo$.
\end{corollary}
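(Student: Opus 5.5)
The plan is to reduce to the round sphere via a conformal developing map and then use the naturality of LeBrun's construction. Let $(\Sigma,[g])$ be compact, simply connected and locally conformally flat.

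First, apply Kuiper's theorem: the developing map $D:\Sigma\to S^3$ is a conformal immersion. Since $\Sigma$ is compact, $D$ is a local diffeomorphism onto an open and closed subset of $S^3$, hence surjective. It is therefore a covering map, and since $S^3$ is simply connected and $\Sigma$ is connected, it is a conformal diffeomorphism. So $(\Sigma,[g])\cong(S^3,[g_{\mathrm{rnd}}])$.

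Second, use the naturality of the twistor $CR$ construction. By Proposition~\ref{prop:CR-twistor-basic-features}(i) the twistor space depends only on $[g]$. A conformal diffeomorphism $\phi$ induces a bundle map $(\phi^{-1})^*$ on $\C T^*$ which preserves the null cone up to scale. It also commutes with the $\C^*$-action and preserves the tautological $1$-form, hence preserves $\Ker d\alpha$. So it descends to a $CR$ diffeomorphism of twistor spaces covering $\phi$. It therefore suffices to show that the twistor $CR$ space of the round $S^3$ is globally $CR$-diffeomorphic to $\Qtwotwo$.

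Third, realize this globally. Take $S^3\subset S^4=\HP^1$ as an equatorial sphere. Its preimage under the quaternionic fibration \eqref{eq:Hopf-background}, after a unitary change of coordinates, is $\Qtwotwo$. This is a real hypersurface of $\CP^3$ fibred over $S^3$ by twistor lines. I would construct an explicit fibre-preserving map from the null-covector bundle $M(S^3)$ to $\pi^{-1}(S^3)$. At each point $x$, an oriented $2$-plane in $T_xS^3$ together with its complex structure determines a unique orthogonal complex structure on $T_xS^4=T_xS^3\oplus\R\nu$, which is a point of the twistor fibre over $x$. This is LeBrun's original identification of the boundary of the twistor space of a hyperbolic $4$-ball. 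It gives a diffeomorphism that is fibrewise biholomorphic, $\CP^1\cong\CP^1$.

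The main obstacle is checking that this diffeomorphism is $CR$ rather than just smooth, i.e.\ that the induced structure $T\Qtwotwo\cap T^{0,1}\CP^3$ matches $H^{0,1}M$. I would avoid a direct computation and argue by equivariance instead. The conformal group $\SO(4,1)$ of $S^3$ acts on both sides, on $\Qtwotwo$ through its lift $PSp(1,1)\subset\PSU(2,2)$, and the map is equivariant. The isotropy of a point acts transitively on the corresponding fibre of each space, and it also acts on the $CR$ structures. By Proposition~\ref{prop:local-flatness-twistor} both structures are locally flat of signature $(1,1)$. So the local $CR$ equivalences given by Chern--Moser can be compared with the map, and the equivariance together with agreement along the fibres should force them to coincide. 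Failing this, one can fall back on a direct verification at a single point, which extends everywhere by homogeneity.
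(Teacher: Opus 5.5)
Your proposal follows the paper's route. Kuiper's theorem, which you correctly derive from the developing map, reduces everything to the round $S^3$; naturality transports the twistor $CR$ structure; and the conclusion rests on identifying the twistor $CR$ space of the round sphere with $\Qtwotwo=\pi^{-1}(S^3)$, which the paper simply takes as known from LeBrun's construction. For that last step, your homogeneity fallback is the sound argument: equivariance under $PSp(1,1)$ plus transitivity reduces $CR$-ness of your map to a single point. Your expectation that equivariance and agreement along the fibres force the two structures to coincide is also correct, but the reason is not Chern--Moser. It is that the isotropy of a point of $M(S^3)$ (rotations about the normal, dilations, special conformal maps) preserves exactly one $CR$ structure there containing the vertical $(0,1)$-line.
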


\begin{proof}
By Proposition~\ref{prop:local-flatness-twistor}, the twistor $CR$ space
$M$ is locally $CR$-equivalent to $\Qtwotwo$. By
Lemma~\ref{lem:twistor-simply-connected}, $M$ is simply connected.

Since $(\Sigma,[g])$ is compact, simply connected, and locally conformally flat,
Kuiper's theorem implies that $(\Sigma,[g])$ is conformally diffeomorphic to the
round $3$-sphere \cite{Kuiper1949}. Therefore the associated twistor $CR$
space is the twistor space of the round conformal $3$-sphere, which is
globally identified with the split hyperquadric $\Qtwotwo$.
\end{proof}
We point out that in \cite[Theorem~4.8 and Section~5]{Marugame2025} Fefferman
metric of a twistor $CR$ manifold  is explicitly computed   and it is shown that its conformal geometry is governed by the geometry of the base.
\begin{proposition}\label{lem:twistor-simply-connected}
Let $(\Sigma,[g])$ be a conformal Riemannian $3$-manifold, and let
\[
\pi_{CR}:M\longrightarrow \Sigma
\]
be its twistor $CR$ fibration. If $\Sigma$ is simply connected, then $M$ is
simply connected as well.
\end{proposition}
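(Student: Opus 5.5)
The plan is to use the long exact sequence of homotopy groups of the fibre bundle
\[
\CP^1 \longrightarrow M \xrightarrow{\ \pi_{CR}\ } \Sigma .
\]
By Proposition~\ref{prop:CR-twistor-basic-features}(ii), $\pi_{CR}$ is a smooth fibre bundle whose fibres $M_x$ are smooth plane conics, hence diffeomorphic to $S^2\cong\CP^1$.

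The first step is local triviality. On a neighbourhood $U$ of a point, choose a $g$-orthonormal coframe $(e^1,e^2,e^3)$; this identifies $\C T^*\Sigma|_U \cong U\times\C^3$ with $g^{-1}$ becoming the standard form $\zeta_1^2+\zeta_2^2+\zeta_3^2$. Then
\[
M|_U \cong U\times C,\qquad C=\{[\zeta]\in\CP^2 : \zeta_1^2+\zeta_2^2+\zeta_3^2=0\}.
\]
So $\pi_{CR}$ is a locally trivial bundle with fibre $C\cong S^2$ and structure group $\SO(3)$ acting on $C$. This holds since the conformal class only rescales $g^{-1}$, which does not change the null cone. Equivalently, one can use the alternative description of $M$ as the unit sphere bundle $S\Sigma$ in the oriented case; the coframe argument avoids needing an orientation.

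The second step is to extract from the long exact sequence the segment
\[
\pi_1(S^2)\longrightarrow \pi_1(M)\longrightarrow \pi_1(\Sigma)\longrightarrow \pi_0(S^2).
\]
Since $\pi_1(S^2)=0$ and $\pi_1(\Sigma)=0$ by hypothesis, exactness gives $\pi_1(M)=0$. Path-connectedness of $M$ follows from $\pi_0(S^2)=0$ together with the connectedness of $\Sigma$, which is implicit in simple connectivity. Hence $M$ is simply connected.

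The only point that needs care is confirming that $\pi_{CR}$ really is a fibre bundle, so that the homotopy exact sequence applies; it is not a mere submersion with varying fibres. This is handled by the local orthonormal trivialization above. No paracompactness subtleties arise, since the projection is a proper submersion with compact fibres. By Ehresmann's theorem one could alternatively conclude local triviality directly from the fact that $\pi_{CR}$ is a proper surjective submersion.
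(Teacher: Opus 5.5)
Your proof is correct and follows the same route as the paper, which also applies the long exact homotopy sequence of the fibration $\CP^1\hookrightarrow M\to\Sigma$ and uses $\pi_1(\CP^1)=\pi_1(\Sigma)=0$. Your local trivialization via an orthonormal coframe and your check of path-connectedness fill in details that the paper leaves implicit.
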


\begin{proof}
The fibres of \(\pi_{CR}\) are \(\CP^1\) by
Proposition~\ref{prop:CR-twistor-basic-features}, hence simply connected.
The conclusion follows immediately from the long exact sequence of homotopy groups
of the fibration
\[
\CP^1\hookrightarrow M\overset{\pi_{CR}}{\longrightarrow} \Sigma,
\]
using \(\pi_1(\CP^1)=\pi_1(\Sigma)=0\).
\end{proof}

Then, even without the compactness assumption in Corollary~\ref{cor:global-flatness-twistor}, the locally conformally flat case implies that the twistor \(CR\) space is locally \(CR\)-equivalent to the flat model \(\Qtwotwo\) and if, in addition, it is simply connected, this local equivalence globalizes to a map into \(\Qtwotwo\), though not necessarily to a global \(CR\)-diffeomorphism onto \(\Qtwotwo\).

\subsection{The two ambient spaces: \texorpdfstring{$\CP^3$}{CP3} and the flag threefold}
\label{sec:ambient-models}

We now describe the two complex threefolds serving as ambient spaces throughout the
paper,  $\CP^3$ and the full flag threefold $\mathbb{F}$ . These are the basic algebraic
twistor threefolds associated with the rank-one geometries of $S^4$ and $\CP^2$,
providing the most concrete settings in which twistor fibres, real structures, and
embedded models can be studied explicitly.
Both spaces display the main geometric ingredients that recur throughout the paper, i.e.
fibrations by rational curves, anti-holomorphic symmetries, and homogeneous group
actions. The
computational core takes place in $\CP^3$, where $\Qtwotwo$ admits a particularly
simple projective realization; $\mathbb{F}$ serves as an essential complementary
model, helping to distinguish intrinsic features from those depending on the embedding.

\medskip
\noindent\textbf{The projective space model.}
Identifying $S^4$ with the quaternionic projective line $\HP^1$, one obtains the
quaternionic fibration \eqref{eq:Hopf-background}, whose fibres are projective
lines $\CP^1\subset\CP^3$, making $\CP^3$ a $\CP^1$-bundle over $S^4$
\cite{AtiyahHitchinSinger1978,Altavilla2025}. The quaternionic structure
on $\C^4$ is encoded by the anti-linear map $J(z)=S\overline{z}$, where
\[
S=
\begin{pmatrix}
0&-1&0&0\\
1&0&0&0\\
0&0&0&-1\\
0&0&1&0
\end{pmatrix},
\qquad J^2=-\Id,
\]
whose projectivization is the fixed-point-free anti-holomorphic involution
\begin{equation}\label{eq:j-section2}
j[z_0:z_1:z_2:z_3]=[-\overline z_1:\overline z_0:-\overline z_3:\overline z_2]
\end{equation}
\cite{SalamonViaclovsky2009,Altavilla2025}.
The geometric role of $j$ is to single out exactly the twistor fibres, i.e. a projective
line $\ell\subset\CP^3$ is a fibre of $\pi$ if and only if it is $j$-invariant,
equivalently of the form $\ell_z:=\PP(\Span_\C\{z,Jz\})$ for some
$z\in\C^4\setminus\{0\}$.

\medskip
\noindent\textbf{The flag model.}
The full flag threefold
\[
\mathbb{F}=\bigl\{(p,\ell)\in\CP^2\times(\CP^2)^*\mid p\in\ell\bigr\}
\cong \SL(3,\C)/B,
\]
where $B$ is a Borel subgroup, is the incidence variety of points and lines in $\CP^2$. It is the twistor
space of $\CP^2$ with its Fubini--Study conformal class, with twistor projection defined in Formula~\eqref{eq:flag-twistor-projection}
(see \cite{Altavilla2022}), whose fibres are again projective lines, making
$\mathbb{F}$ a $\CP^1$-bundle. The flag threefold is the counterpart of the
previous model and will serve as the natural home for the lift of the spherical $CR$
$3$-sphere in $\CP^2$, showing that the twistor phenomena we study are not exclusive
to the quaternionic setting.

\section{Embedded \texorpdfstring{$3$}{3}-spheres and their twistor lifts}\label{sec:embedded-spheres}
The \(3\)-sphere appears naturally in two related flag-geometric settings, on the
one hand, as a standard equatorial \(3\)-sphere inside \(S^4\)
equipped with the round metric, lifted through the
quaternionic twistor fibration \(\CP^3\to\mathbb{HP}^1\); on the other hand, as the flat
\(CR\) model in complex dimension two, realized as the standard \(CR\)
hypersurface in \(\CP^2\) and lifted to the corresponding flag threefold.
Although these two
constructions arise from different ambient geometries, the resulting twistor spaces are
closely related, in fact both encode the same flat Levi--nondegenerate $CR$ geometry of Levi form of 
signature $(1,1)$, and the comparison between them shows that this geometry is not
tied to a single ambient realization.

Computationally, one model is much more convenient, in fact the lift of the round $3$-sphere
to $\CP^3$ is precisely the   hyperquadric $\Qtwotwo$, and this is the main reason why the
subsequent analysis is carried out in $\CP^3$. We describe each embedded model and
its lift in turn, then compare the two and explain why they provide equivalent models
for the flat twistor $CR$ geometry studied in the rest of the paper.

\subsection{The round \texorpdfstring{$S^3\subset S^4$}{S3 in S4}}

The round $3$-sphere sits inside $S^4\simeq\HP^1$ as, e.g., the equatorial hypersphere
\begin{equation}\label{eq:S3-in-S4}
S^3=\bigl\{[q_0:q_1]\in\HP^1\;\big|\;\norm{q_0}=\norm{q_1}\bigr\},
\end{equation}
inheriting the round conformal structure from $S^4$. Writing $q_0=z_0+jz_1$ and
$q_1=z_2+jz_3$, so that $\norm{q_k}^2$ becomes the sum of squares of the
corresponding complex components, the inverse image of \eqref{eq:S3-in-S4} under
the twistor fibration \eqref{eq:Hopf-background} is
\begin{equation}\label{eq:Q22-as-lift}
\pi^{-1}(S^3)=\Bigl\{[z_0:z_1:z_2:z_3]\in\CP^3\;\Big|\;
|z_0|^2+|z_1|^2=|z_2|^2+|z_3|^2\Bigr\},
\end{equation}
which is precisely $\Qtwotwo$. Thus the flat twistor $CR$ model arises concretely as
the twistor lift of the round $3$-sphere: $\Qtwotwo$ is the projectivized bundle of
complex null covectors of $(S^3,[g_{\mathrm{rnd}}])$, and the restricted projection
$\pi|_{\Qtwotwo}:\Qtwotwo\to S^3$ is again a $\CP^1$-bundle.

The model \eqref{eq:Q22-as-lift} already exhibits the three features governing the
rest of the paper: $\Qtwotwo$ is embedded in $\CP^3$ and amenable to projective
methods; it carries a distinguished fibration by twistor fibres; and it inherits the
anti-holomorphic involution $j$ of \eqref{eq:j-section2}, which preserves $\Qtwotwo$
and encodes the underlying real twistor geometry.

The round \(3\)-sphere is not the only embedded \(3\)-manifold in \(S^4\) whose twistor lift is locally modeled on \(\Qtwotwo\); the following example shows that the global topology of the base may change without altering the local flat \(CR\) geometry upstairs.

\begin{example}
Besides the equatorial \(3\)-sphere, the \(4\)-sphere contains other natural embedded
\(3\)-manifolds whose twistor lifts are locally modeled on \(\Qtwotwo\).  A particularly
simple example is the \emph{generalized Clifford hypersurface} of type \((2,1)\),
obtained from the orthogonal splitting
\[
\R^5=\R^3\oplus \R^2.
\]
Writing a point of \(S^4\subset \R^5\) as
\[
x=(x_0,x_1,x_2,x_3,x_4),
\qquad
x_0^2+x_1^2+x_2^2+x_3^2+x_4^2=1,
\]
we set
\[T_{2,1}
:=
\Bigl\{
x\in S^4
\;\Big|\;
x_0^2+x_1^2+x_2^2=\frac12,
\quad
x_3^2+x_4^2=\frac12
\Bigr\}.\]
Equivalently,
\[
T_{2,1}
\cong 
S^2\!\left(\frac1{\sqrt2}\right)\times S^1\!\left(\frac1{\sqrt2}\right)
\subset \R^3\oplus \R^2,
\]
so \(T_{2,1}\) is diffeomorphic to \(S^2\times S^1\).  By analogy with the classical
terminology, one may also think of \(T_{2,1}\) as a Clifford hypersurface in \(S^4\).

We now describe \(M_T:=\pi^{-1}(T_{2,1})\subset \CP^3\) explicitly in homogeneous coordinates.  Let
\(q_0=z_0+jz_1\),
and \(q_1=z_2+jz_3\),
and set
\[
A:=|z_0|^2+|z_1|^2,
\qquad
B:=|z_2|^2+|z_3|^2,
\]
\[
\alpha:=z_2\overline z_0+z_3\overline z_1,
\qquad
\beta:=z_0z_3-z_1z_2.
\]
Then the twistor map may be written as
\[\pi([z_0:z_1:z_2:z_3])
=
\frac{1}{A+B}
\bigl(
A-B,\,
2\Re\alpha,\,
2\Im\alpha,\,
2\Re\beta,\,
2\Im\beta
\bigr),\]
which indeed takes values in \(S^4\subset \R^5\), since
\[
(A-B)^2+4|\alpha|^2+4|\beta|^2=(A+B)^2.
\]
Therefore the condition \(\pi([z])\in T_{2,1}\) is equivalent to
\[
\left(\frac{A-B}{A+B}\right)^2
+
\left(\frac{2\Re\alpha}{A+B}\right)^2
+
\left(\frac{2\Im\alpha}{A+B}\right)^2
=
\frac12,
\]
or, equivalently,
\[M_T
=
\left\{
[z_0:z_1:z_2:z_3]\in \CP^3
\ \middle|\
2\bigl((A-B)^2+4|\alpha|^2\bigr)=(A+B)^2
\right\}.\]
Using the identity above, one may also rewrite this as
\[M_T
=
\left\{
[z_0:z_1:z_2:z_3]\in \CP^3
\ \middle|\
(A-B)^2+4|\alpha|^2=4|\beta|^2
\right\}.\]
Thus \(M_T\) is a real hypersurface in \(\CP^3\), given by a single explicit
homogeneous equation.

On the affine chart \(U_0=\{z_0\neq 0\}\), with coordinates
\(u_1=\frac{z_1}{z_0},
u_2=\frac{z_2}{z_0},
u_3=\frac{z_3}{z_0}\),
one has
\[
A=1+|u_1|^2,
\qquad
B=|u_2|^2+|u_3|^2,
\qquad
\alpha=u_2+u_3\overline u_1,
\qquad
\beta=u_3-u_1u_2,
\]
and therefore \(M_T\cap U_0\) is given by
\[2\Bigl(
\bigl(1+|u_1|^2-|u_2|^2-|u_3|^2\bigr)^2
+
4|u_2+u_3\overline u_1|^2
\Bigr)
=
\bigl(1+|u_1|^2+|u_2|^2+|u_3|^2\bigr)^2.\]

The twistor \(CR\) manifold \(M_T\) is locally \(CR\)-equivalent to \(\Qtwotwo\), but
not globally.  Indeed, the product metric on \(S^2\times \R\) is locally conformally
flat, since if \(r=e^t\) then the Euclidean metric on \(\R^3\setminus\{0\}\) may be
written as
\[
dr^2+r^2g_{S^2}
=
e^{2t}\bigl(dt^2+g_{S^2}\bigr).
\]
Hence \(dt^2+g_{S^2}\) is conformal to the flat Euclidean metric on
\(\R^3\setminus\{0\}\).  Since \(S^2\times S^1\) is locally isometric to \(S^2\times \R\),
it follows that \(T_{2,1}\) is locally conformally flat.  By LeBrun's theorem, its
twistor \(CR\) space is therefore locally \(CR\)-equivalent to the flat model
\(\Qtwotwo\).

On the other hand, \(M_T\) cannot be globally \(CR\)-equivalent to \(\Qtwotwo\).
Indeed, the fibres of the restricted projection
\(\pi|_{M_T}:M_T\longrightarrow T_{2,1}\)
are projective lines \(\CP^1\), hence simply connected, and the long exact sequence of
homotopy groups gives
\[
\pi_1(M_T)\cong \pi_1(T_{2,1})\cong \Z.
\]
By contrast, \(\Qtwotwo\) is the twistor lift of \(S^3\), so
\[
\pi_1(\Qtwotwo)=0.
\]
Thus \(M_T\) and \(\Qtwotwo\) are locally \(CR\)-equivalent but globally distinct.
\end{example}

\subsection{The spherical \texorpdfstring{$CR$}{CR} \texorpdfstring{$3$}{3}-sphere in \texorpdfstring{$\CP^2$}{CP2}}\label{3sph}

We now consider the second natural embedded model of the \(3\)-sphere, namely the
standard spherical \(CR\) hypersurface in \(\CP^2\).  Let
\[
h_{1,2}(z,z)=|z_0|^2-|z_1|^2-|z_2|^2
\]
be the Hermitian form of signature \((1,2)\) on \(\C^3\).  The corresponding projective
hyperquadric
\begin{equation}\label{eq:spherical-CR-sphere}
S^3_{\mathrm{sph}}
=
\bigl\{
[z_0:z_1:z_2]\in \CP^2
\;\big|\;
|z_0|^2=|z_1|^2+|z_2|^2
\bigr\}
\end{equation}
is a smooth real hypersurface in \(\CP^2\), endowed with the \(CR\) structure induced
from the ambient complex structure.  In the affine chart \(z_0=1\), it is given by
\[
|z_1|^2+|z_2|^2=1,
\]
so it is diffeomorphic to the standard \(3\)-sphere.  Its \(CR\) structure is the usual
spherical one, and its full \(CR\) automorphism group is \(\PSU(1,2)\) \cite{Cartan1925}.

From the homogeneous point of view, \eqref{eq:spherical-CR-sphere} is the unique
closed \(\SU(1,2)\)-orbit in \(\CP^2\).  Thus, just as the round \(3\)-sphere appears as
a distinguished hypersphere in \(S^4\), the spherical \(CR\) \(3\)-sphere appears here as
a distinguished homogeneous hypersurface in the complex projective plane.

Its twistor lift lives in the flag threefold $\mathbb{F}$, via the twistor projection
$\pi_{\mathbb{F}}$ introduced in \eqref{eq:flag-twistor-projection}.
We may therefore define the lift of the spherical \(CR\) sphere by
\[
M_{\mathbb F}
:=
\pi_{\mathbb F}^{-1}(S^3_{\mathrm{sph}})
\subset \mathbb F.
\]
Equivalently, \(M_{\mathbb F}\) admits the following explicit description inside the
incidence variety \(\mathbb F\).  Writing
\(p=[p_0:p_1:p_2]\in \CP^2,
\ell=[\ell_0:\ell_1:\ell_2]\in (\CP^2)^*\),
with the incidence relation
\(p_0\ell_0+p_1\ell_1+p_2\ell_2=0\),
and using the explicit formula for
\(\pi_{\mathbb F}(p,\ell)
\) in  \eqref{eq:flag-twistor-projection},
the condition \(\pi_{\mathbb F}(p,\ell)\in S^3_{\mathrm{sph}}\) is equivalent to
\begin{equation}\label{eq:MF-explicit}
\bigl|\overline p_1\ell_2-\overline p_2\ell_1\bigr|^2
=
\bigl|\overline p_2\ell_0-\overline p_0\ell_2\bigr|^2
+
\bigl|\overline p_0\ell_1-\overline p_1\ell_0\bigr|^2.
\end{equation}
Hence
\[M_{\mathbb F}
=
\left\{
(p,\ell)\in \mathbb F
\ \middle|\
\bigl|\overline p_1\ell_2-\overline p_2\ell_1\bigr|^2
=
\bigl|\overline p_2\ell_0-\overline p_0\ell_2\bigr|^2
+
\bigl|\overline p_0\ell_1-\overline p_1\ell_0\bigr|^2
\right\}.\]
Thus \(M_{\mathbb F}\) is a real hypersurface in the flag threefold, explicitly defined
by the incidence relation together with the single real equation
\eqref{eq:MF-explicit}.
As in the quaternionic case, the fibres of the restricted projection
\[
\pi_{\mathbb F}|_{M_{\mathbb F}}:M_{\mathbb F}\longrightarrow S^3_{\mathrm{sph}}
\]
are projective lines, so \(M_{\mathbb F}\) is again a \(\CP^1\)-bundle over a
\(3\)-sphere.

There is also a distinguished homogeneous section of this bundle, obtained by sending
an isotropic line \(p\in S^3_{\mathrm{sph}}\subset \CP^2\) to its Hermitian orthogonal
complement \(p^\perp\).  More explicitly,
\[
\Sigma_{\mathbb F}
=
\bigl\{
(p,p^\perp)\in \mathbb F
\mid
p\in S^3_{\mathrm{sph}}
\bigr\}
\subset M_{\mathbb F}.
\]
This submanifold is naturally identified with the spherical \(CR\) \(3\)-sphere itself,
and may be viewed as the flag-theoretic analogue of the distinguished real locus in the
quaternionic model.

Thus $M_{\mathbb{F}}$ provides a second explicit twistor lift of a $3$-sphere,
complementary to $\Qtwotwo$. Although they arise from different ambient geometries,
both encode the same flat Levi--nondegenerate $CR$ geometry of Levi form of signature $(1,1)$,
they are two
realizations of the same underlying model, seen from the conformal-quaternionic and
the projective-$CR$ side respectively, confirming that this geometry is intrinsic to
the lifted $CR$ structure and not an artifact of the embedding.

\begin{remark}\label{rem:orbits}
The two embedded models of $S^3$ described above both arise as the unique closed
real orbit in a flag manifold (see  \cite{MariniMedoriNacinovich2023} and the references therein).

On the conformal side, let $SO_0(4,1)$ act on $S^4$ preserving the quadratic form
$q(x)=x_1^2+x_2^2+x_3^2+x_4^2-x_5^2$. Then
\[
S^4=\mathcal O_+\sqcup \Sigma\sqcup \mathcal O_-,
\qquad
\mathcal O_\pm=\{x\in S^4\mid q(x)\gtrless 0\},
\quad
\Sigma=\{x\in S^4\mid q(x)=0\}.
\]
The condition $q(x)=0$ together with $\sum_{j=1}^5 x_j^2=1$ forces
$x_1^2+\cdots+x_4^2=\tfrac12$, so $\Sigma\cong S^3$ is the unique closed orbit,
homogeneous and locally conformally flat \cite{KrotzSchlichtkrull2014}.

On the complex projective side, let \(SU(1,2)\) act on \(\CP^2\) preserving
\[
h_{1,2}(z,z)=|z_0|^2-|z_1|^2-|z_2|^2.
\]
Then
\[
\CP^2=\Omega_+\sqcup Q^{1,2}\sqcup\Omega_-,
\]
where
\[
\Omega_+:=\{[z]\in\CP^2\mid h_{1,2}(z,z)>0\},\qquad\Omega_-:=\{[z]\in\CP^2\mid h_{1,2}(z,z)<0\},
\]
and
\[
Q^{1,2}:=\{[z]\in\CP^2\mid h_{1,2}(z,z)=0\},.
\]
Thus \(\Omega_+\) and \(\Omega_-\) are the two open \(SU(1,2)\)-orbits, while
\(Q^{1,2}\) is the unique closed orbit. In the chart \(z_0=1\), the equation of
\(Q^{1,2}\) becomes
\[
|z_1|^2+|z_2|^2=1,
\]
so \(Q^{1,2}\cong S^3\), endowed with its standard flat \(CR\) structure.
The twistor pull-back of $\Sigma$ to $\CP^3$ is $\Qtwotwo$, which is thus the unique
closed $SU(2,2)$-orbit in $\CP^3$; the pull-back of $Q^{1,2}$ to the flag threefold
is $M_{\mathbb F}$, the unique closed $SU(1,2)$-orbit there. Both are locally
$CR$-equivalent to $\Qtwotwo$.
\end{remark}

\section{Explicit \texorpdfstring{$CR$}{CR} geometry of \texorpdfstring{$\Qtwotwo\subset \CP^3$}{Q22 in CP3}}\label{sec:explicit-CR} 
We now develop the explicit \(CR\) geometry 
of \(\Qtwotwo\subset\CP^3\), defining 
equations in affine charts, local 
generators of the \(CR\) bundle, the 
contact form, and a direct verification 
that the Levi form has constant split 
signature \((1,1)\). Beyond providing the 
technical backbone for the later sections, 
these computations also serve a conceptual 
purpose, namely to show explicitly how the 
quaternionic involution \(j\), the twistor 
fibration, and the induced \(CR\) 
distribution fit together inside the 
projective model, thereby making precise 
the interaction between the intrinsic flat 
\(CR\) geometry and the extrinsic 
projective geometry of \(\CP^3\). 
\subsection{Defining equations and affine charts} 
We now describe the flat model \(\Qtwotwo\) 
explicitly as a real hypersurface in 
projective space. Let \[ H=\diag(1,1,-1,-1) \] and consider the Hermitian form \[ h(z,w)=z^*Hw, \qquad z,w\in \C^4. \] Then 
the hyperquadric is the projectivized null 
cone of \(h\), namely 
\begin{equation}\label{eq:Q22-homogeneous} \Qtwotwo = \bigl\{ [z_0:z_1:z_2:z_3]\in \CP^3 \;\big|\; h(z,z)=0 \bigr\}. 
\end{equation} 
Since the defining equation is homogeneous 
of bidegree \((1,1)\), the hypersurface 
\(\Qtwotwo\) is well defined in projective 
space and is invariant under the natural action of the projective unitary group of the form \(h\). 
The \(CR\) structure on \(\Qtwotwo\) is the one induced from the ambient complex structure of \(\CP^3\). Thus, if \(J_{\CP^3}\) denotes the complex structure of \(\CP^3\), the underlying real \(CR\) distribution is \[ H_{\R}\Qtwotwo = T\Qtwotwo\cap J_{\CP^3}(T\Qtwotwo), \] and the bundle \(T^{0,1}\Qtwotwo\) is the \((-i)\)-eigenspace of the induced complex structure on \(H_{\R}\Qtwotwo\). In order to compute this structure explicitly, we pass to affine charts. For \(j=0,1,2,3\), let \[ U_j=\{z_j\neq 0\}\subset \CP^3 \] be the standard affine chart. Since \(\Qtwotwo\) is invariant under permutations of the positive coordinates \((z_0,z_1)\) and separately of the negative coordinates \((z_2,z_3)\), it will be enough for most purposes to work in one chart from each sign block. The most convenient choice will be the chart \[ U_0=\{z_0\neq 0\}, \] with affine coordinates \[ u_1=\frac{z_1}{z_0}, \qquad u_2=\frac{z_2}{z_0}, \qquad u_3=\frac{z_3}{z_0}. \] In these coordinates, \eqref{eq:Q22-homogeneous} becomes \begin{equation}\label{eq:Q22-affine-U0} \Qtwotwo\cap U_0 = \bigl\{ (u_1,u_2,u_3)\in \C^3 \;\big|\; 1+|u_1|^2-|u_2|^2-|u_3|^2=0 \bigr\}, \end{equation} so that on \(U_0\) the hypersurface is defined by the real-valued function \[ \rho_0(u,\overline u) = 1+|u_1|^2-|u_2|^2-|u_3|^2. \]
For comparison we also record the chart \[ U_3=\{z_3\neq 0\}, \] with affine coordinates \[ w_0=\frac{z_0}{z_3}, \qquad w_1=\frac{z_1}{z_3}, \qquad w_2=\frac{z_2}{z_3}. \] There \eqref{eq:Q22-homogeneous} becomes \begin{equation}\label{eq:Q22-affine-U3} \Qtwotwo\cap U_3 = \bigl\{ (w_0,w_1,w_2)\in \C^3 \;\big|\; |w_0|^2+|w_1|^2-|w_2|^2-1=0 \bigr\}, \end{equation} that is, \(\Qtwotwo\cap U_3\) is defined by \[ \rho_3(w,\overline w) = |w_0|^2+|w_1|^2-|w_2|^2-1. \] 
The affine forms \eqref{eq:Q22-affine-U0} and \eqref{eq:Q22-affine-U3} make the split nature of the hypersurface completely transparent, in each chart \(\Qtwotwo\) appears as an indefinite hyperquadric in \(\C^3\), and this is the local manifestation of the fact that \(\Qtwotwo\) is the flat Levi--nondegenerate \(CR\) model with Levi form of signature \((1,1)\). For later use we note that the defining equation \eqref{eq:Q22-homogeneous} is manifestly invariant under the involution \(j\) of \eqref{eq:j-section2}, which therefore preserves \(\Qtwotwo\). The affine models above thus describe the local form of a distinguished \(CR\) hypersurface in projective twistor space. In the next subsection we compute explicit local generators of the bundle \(T^{0,1}\Qtwotwo\). 
\subsection{Local generators of the \texorpdfstring{$CR$}{CR} bundle} We now describe the bundle \(T^{0,1}\Qtwotwo\) explicitly in the affine charts introduced above. \(\Qtwotwo\) is a real hypersurface in the complex threefold \(\CP^3\) and  its \(CR\) dimension is \(2\). Accordingly, in each affine chart the bundle \(T^{0,1}\Qtwotwo\) is a rank-\(2\) complex subbundle of the ambient antiholomorphic tangent bundle. We begin with the chart \(U_0\), where \[ (u_1,u_2,u_3)=\left(\frac{z_1}{z_0},\frac{z_2}{z_0},\frac{z_3}{z_0}\right), \qquad \rho_0(u,\overline u)=1+|u_1|^2-|u_2|^2-|u_3|^2. \] An ambient antiholomorphic vector field has the form \[ \overline L = c_1\frac{\partial}{\partial \overline u_1} + c_2\frac{\partial}{\partial \overline u_2} + c_3\frac{\partial}{\partial \overline u_3}, \] where \(c_1,c_2,c_3\) are smooth complex-valued functions. Such a vector field is tangent to \(\Qtwotwo\) if and only if \[ \overline L(\rho_0)=0, \] that is, \begin{equation}\label{eq:tangency-U0-CR} c_1u_1-c_2u_2-c_3u_3=0. \end{equation} Hence \[ T^{0,1}(\Qtwotwo\cap U_0) = \left\{ c_1\frac{\partial}{\partial \overline u_1} + c_2\frac{\partial}{\partial \overline u_2} + c_3\frac{\partial}{\partial \overline u_3} \;\middle|\; c_1u_1-c_2u_2-c_3u_3=0 \right\}. \] A convenient generating system is given by the three ambient antiholomorphic vector fields \[ \overline M_{12} := u_2\frac{\partial}{\partial \overline u_1} + u_1\frac{\partial}{\partial \overline u_2}, \qquad \overline M_{13} := u_3\frac{\partial}{\partial \overline u_1} + u_1\frac{\partial}{\partial \overline u_3}, \qquad \overline M_{23} := u_3\frac{\partial}{\partial \overline u_2} - u_2\frac{\partial}{\partial \overline u_3}. \] Indeed, a direct computation shows that \[ \overline M_{12}(\rho_0)= \overline M_{13}(\rho_0)= \overline M_{23}(\rho_0)=0, \] so these vector fields are tangent to \(\Qtwotwo\). They satisfy the relation 
\[ u_3\,\overline M_{12} - u_2\,\overline M_{13} - u_1\,\overline M_{23} = 0,\]  
and therefore generate a rank-\(2\) complex bundle, as expected. On smaller open sets one may choose an actual local frame consisting of two generators. For instance, on \[ U_0\cap\{u_3\neq 0\}, \] equation \eqref{eq:tangency-U0-CR} allows us to solve for the coefficient of \(\partial/\partial \overline u_3\), giving the local frame \[ \overline Z_1 := \frac{\partial}{\partial \overline u_1} + \frac{u_1}{u_3}\frac{\partial}{\partial \overline u_3}, \qquad \overline Z_2 := \frac{\partial}{\partial \overline u_2} - \frac{u_2}{u_3}\frac{\partial}{\partial \overline u_3}. \] Indeed, \[ \overline Z_1(\rho_0)=\overline Z_2(\rho_0)=0, \] and every local section of \(T^{0,1}\Qtwotwo\) on this open set can be written uniquely as a complex linear combination of \(\overline Z_1\) and \(\overline Z_2\). Analogous frames are obtained on the open sets where \(u_1\neq 0\) or \(u_2\neq 0\). For comparison, on the chart \(U_3\), where \[ (w_0,w_1,w_2)=\left(\frac{z_0}{z_3},\frac{z_1}{z_3},\frac{z_2}{z_3}\right), \qquad \rho_3(w,\overline w)=|w_0|^2+|w_1|^2-|w_2|^2-1, \] an ambient antiholomorphic vector field \[ \overline L = c_0\frac{\partial}{\partial \overline w_0} + c_1\frac{\partial}{\partial \overline w_1} + c_2\frac{\partial}{\partial \overline w_2} \] is tangent to \(\Qtwotwo\) if and only if 
\[
c_0w_0+c_1w_1-c_2w_2=0.\] 
A convenient generating system is then \[ \overline L_{01} := w_1\frac{\partial}{\partial \overline w_0} - w_0\frac{\partial}{\partial \overline w_1}, \qquad \overline L_{02} := w_2\frac{\partial}{\partial \overline w_0} + w_0\frac{\partial}{\partial \overline w_2}, \qquad \overline L_{12} := w_2\frac{\partial}{\partial \overline w_1} + w_1\frac{\partial}{\partial \overline w_2}, \] which satisfy \[ \overline L_{01}(\rho_3)= \overline L_{02}(\rho_3)= \overline L_{12}(\rho_3)=0 \] and the relation 
\[
 w_1\,\overline L_{02} - w_0\,\overline L_{12} - w_2\,\overline L_{01} = 0. 
\] 
On \(U_3\cap\{w_2\neq 0\}\) one may take the local frame \[ \overline Y_0 := \frac{\partial}{\partial \overline w_0} + \frac{w_0}{w_2}\frac{\partial}{\partial \overline w_2}, \qquad \overline Y_1 := \frac{\partial}{\partial \overline w_1} + \frac{w_1}{w_2}\frac{\partial}{\partial \overline w_2}. \] 
\subsection{Contact form and Levi form} We now compute the contact form and the Levi form of the hypersurface \(\Qtwotwo\subset \CP^3\) in the affine charts introduced above. Since the \(CR\) structure is induced from the ambient complex structure of projective space, a natural contact form is obtained in the standard way from a local defining function \cite{DragomirTomassini2006,ChenShaw2001,ChernMoser1974}. We begin on the chart \(U_0\), where \[ \Qtwotwo\cap U_0 = \bigl\{ (u_1,u_2,u_3)\in \C^3 \;\big|\; \rho_0(u,\overline u)=1+|u_1|^2-|u_2|^2-|u_3|^2=0 \bigr\}. \] A compatible contact form is \[\theta_0 = \frac{i}{2}\bigl(\partial \rho_0-\bar\partial \rho_0\bigr)\big|_{T\Qtwotwo}.
\]
Since 
\[ \partial \rho_0 = \overline u_1\,du_1-\overline u_2\,du_2-\overline u_3\,du_3, \qquad \bar\partial \rho_0 = u_1\,d\overline u_1-u_2\,d\overline u_2-u_3\,d\overline u_3, \] 
we obtain 
\[
\theta_0 = \frac{i}{2} \Bigl( \overline u_1\,du_1-\overline u_2\,du_2-\overline u_3\,du_3 - u_1\,d\overline u_1+u_2\,d\overline u_2+u_3\,d\overline u_3 \Bigr)\Big|_{T\Qtwotwo}. 
\]
By construction, \[ H_{\R}\Qtwotwo=\Ker \theta_0, \] and the bundle \(T^{0,1}\Qtwotwo\) is the \((-i)\)-eigenspace of the induced complex structure on \(H_{\R}\Qtwotwo\). 
Although we describe the \(CR\) structure primarily in terms of the bundle
\(T^{0,1}\Qtwotwo\), for the Levi form we adopt the standard convention and evaluate it
on the conjugate bundle $T^{1,0}\Qtwotwo=\overline{T^{0,1}\Qtwotwo}.$ Thus, if \(L,M\in \Gamma\bigl(T^{1,0}\Qtwotwo\bigr)\), the Levi form is given by
\[
\mathcal L(L,\overline M)
=
-\frac{i}{2}\,d\theta_0(L,\overline M).
\]
Since
\[
d\theta_0
=
i\bigl(
du_1\wedge d\overline u_1
-
du_2\wedge d\overline u_2
-
du_3\wedge d\overline u_3
\bigr)\Big|_{T\Qtwotwo},
\]
it follows that
\begin{equation}\label{eq:Levi-U0-general}
\mathcal L(L,\overline M)
=
\frac12
\Bigl(
du_1(L)\,d\overline u_1(\overline M)
-
du_2(L)\,d\overline u_2(\overline M)
-
du_3(L)\,d\overline u_3(\overline M)
\Bigr).
\end{equation}
In particular, the Levi form is simply the restriction to the complex tangent bundle of
the ambient Hermitian form with coefficients \((1,-1,-1)\).

Let us evaluate \(\mathcal L\) on the local frame of \(T^{1,0}\Qtwotwo\) conjugate to
\(\{\overline Z_1,\overline Z_2\}\), namely
\[
Z_1
=
\frac{\partial}{\partial u_1}
+
\frac{\overline u_1}{\overline u_3}\frac{\partial}{\partial u_3},
\qquad
Z_2
=
\frac{\partial}{\partial u_2}
-
\frac{\overline u_2}{\overline u_3}\frac{\partial}{\partial u_3},
\]
defined on \(U_0\cap\{u_3\neq 0\}\). Using \eqref{eq:Levi-U0-general}, we find
\[
\mathcal L(Z_1,\overline Z_1)
=
\frac12
\left(
1-\frac{|u_1|^2}{|u_3|^2}
\right),
\qquad
\mathcal L(Z_2,\overline Z_2)
=
-\frac12
\left(
1+\frac{|u_2|^2}{|u_3|^2}
\right),
\]
and
\[
\mathcal L(Z_1,\overline Z_2)
=
\frac12\,\frac{u_2\overline u_1}{|u_3|^2},
\qquad
\mathcal L(Z_2,\overline Z_1)
=
\frac12\,\frac{u_1\overline u_2}{|u_3|^2}.
\]
Therefore, in the frame \(\{Z_1,Z_2\}\), the Levi matrix is
\begin{equation}\label{eq:Levi-matrix-U0}
[\mathcal L]_{\{Z_1,Z_2\}}
=
\frac{1}{2|u_3|^2}
\begin{pmatrix}
|u_3|^2-|u_1|^2 & u_2\overline u_1\\[4pt]
u_1\overline u_2 & -(|u_3|^2+|u_2|^2)
\end{pmatrix}.
\end{equation}
Its determinant is
\begin{align*}
\det[\mathcal L]_{\{Z_1,Z_2\}}
&=
\frac{1}{4|u_3|^4}
\Bigl(
-(|u_3|^2-|u_1|^2)(|u_3|^2+|u_2|^2)-|u_1|^2|u_2|^2
\Bigr)\\
&=
-\frac{1}{4|u_3|^4}\,
|u_3|^2\bigl(|u_3|^2+|u_2|^2-|u_1|^2\bigr)\\
&=
-\frac{1}{4|u_3|^2}<0,
\end{align*}
where in the last step we used the defining equation $|u_3|^2=1+|u_1|^2-|u_2|^2.$

Hence the Levi form is nondegenerate and has split signature \((1,1)\) at every point
of this open set. Since the signature is locally constant on the connected hypersurface
\(\Qtwotwo\), it follows that \(\Qtwotwo\) is everywhere Levi nondegenerate of
signature \((1,1)\).

The same conclusion is obtained in the affine chart \(U_3\), where
\[
\rho_3(w,\overline w)=|w_0|^2+|w_1|^2-|w_2|^2-1.
\]
The associated contact form is
\[
\theta_3
=
\frac{i}{2}
\Bigl(
\overline w_0\,dw_0+\overline w_1\,dw_1-\overline w_2\,dw_2
-
w_0\,d\overline w_0-w_1\,d\overline w_1+w_2\,d\overline w_2
\Bigr)\Big|_{T\Qtwotwo},
\]
and
\[
d\theta_3
=
i\bigl(
dw_0\wedge d\overline w_0
+
dw_1\wedge d\overline w_1
-
dw_2\wedge d\overline w_2
\bigr)\Big|_{T\Qtwotwo}.
\]
Thus the Levi form is again the restriction of an ambient Hermitian form, this time
with coefficients \((1,1,-1)\), subject to the complex tangency condition
\[
c_0\overline w_0+c_1\overline w_1-c_2\overline w_2=0
\]
for \(T^{1,0}\Qtwotwo\), equivalently
\[
c_0w_0+c_1w_1-c_2w_2=0
\]
for \(T^{0,1}\Qtwotwo\). For example, in the local frame
\[
Y_0
=
\frac{\partial}{\partial w_0}
+
\frac{\overline w_0}{\overline w_2}\frac{\partial}{\partial w_2},
\qquad
Y_1
=
\frac{\partial}{\partial w_1}
+
\frac{\overline w_1}{\overline w_2}\frac{\partial}{\partial w_2},
\]
defined on \(U_3\cap\{w_2\neq 0\}\), the Levi matrix is
\[
[\mathcal L]_{\{Y_0,Y_1\}}
=
\frac{1}{2|w_2|^2}
\begin{pmatrix}
|w_2|^2-|w_0|^2 & -w_1\overline w_0\\[4pt]
-\overline w_1w_0 & |w_2|^2-|w_1|^2
\end{pmatrix},
\]
whose determinant is again \(-1/(4|w_2|^2)\). This confirms, in a second affine
realization, that the Levi signature is everywhere \((1,1)\).

We summarize the discussion in the following proposition.

\begin{proposition}
The hypersurface \(\Qtwotwo\subset \CP^3\), endowed with the \(CR\) structure induced
from the ambient complex structure, is a Levi--nondegenerate \(CR\) manifold of
hypersurface type and constant Levi signature \((1,1)\).
\end{proposition}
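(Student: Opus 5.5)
The cleanest route avoids chart-by-chart bookkeeping and works with the homogeneous Hermitian form \(h(z,w)=z^*Hw\) on \(\C^4\), exploiting its invariance. First, I will check that \(\Qtwotwo\) is a smooth real hypersurface. Its defining function \(\rho(z)=h(z,z)\) on \(\C^4\setminus\{0\}\) has differential \(d\rho_z(v)=2\Re\, h(z,v)\). This cannot vanish identically on a null vector \(z\ne 0\), since \(H\) is nondegenerate and so \(h(z,\cdot)\ne 0\). The hypersurface \(\{\rho=0\}\) in \(\C^4\setminus\{0\}\) is a \(\C^*\)-invariant cone, so it descends to a smooth real hypersurface of \(\CP^3\). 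This can also be read off from the affine equations \eqref{eq:Q22-affine-U0} and \eqref{eq:Q22-affine-U3}, whose gradients never vanish on the zero set. The \(CR\) structure is then of hypersurface type with \(CR\) dimension \(2\), as computed in the frames above.

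Second, I will identify the Levi form invariantly. At \([z]\in\Qtwotwo\), the holomorphic tangent space of \(\CP^3\) is \(\C^4/\C z\), and the complex tangent space of \(\Qtwotwo\) is the image of \(z^{\perp_h}=\{v\mid h(z,v)=0\}\), which contains \(z\) because \(z\) is null. Up to a positive factor and a choice of transverse normalisation, the Levi form is \(v\mapsto h(v,v)\) restricted to \(z^{\perp_h}/\C z\). This is the projective counterpart of formula \eqref{eq:Levi-U0-general}, where the Levi form is the ambient Hermitian form restricted to the complex tangency condition. The form is well defined on the quotient because \(h(z,z)=h(z,v)=0\) for \(v\in z^{\perp_h}\).

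Third, I will compute the signature by linear algebra. Choose a null vector \(w\) with \(h(z,w)=1\); this is possible since \(h\) is nondegenerate and \(z\) is isotropic. Then \(\Span\{z,w\}\) is a hyperbolic plane of signature \((1,1)\), and \(\C^4=\Span\{z,w\}\oplus W\) with \(W=\Span\{z,w\}^{\perp_h}\). Since \(h\) has signature \((2,2)\), \(W\) has signature \((1,1)\). Moreover \(z^{\perp_h}=\C z\oplus W\), so \(z^{\perp_h}/\C z\cong W\) isometrically, and the Levi form is nondegenerate of signature \((1,1)\) at every point.

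Alternatively, one can argue directly from the explicit computation. The determinant \(-1/(4|u_3|^2)<0\) on \(U_0\cap\{u_3\ne0\}\) gives one positive and one negative eigenvalue there. Transitivity of \(\SU(2,2)\) on \(\Qtwotwo\), recalled in Remark~\ref{rem:orbits}, acts by biholomorphisms of \(\CP^3\) preserving \(\Qtwotwo\), hence by \(CR\) automorphisms, and therefore carries the signature to every point.

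The only delicate step is the second one: justifying that the chart expression \eqref{eq:Levi-U0-general} agrees, up to a positive multiple, with the restriction of \(h\) to \(z^{\perp_h}/\C z\). In particular, the sign normalisation must be uniform across charts, so that the invariant \((1,1)\) does not become \((1,1)\) in one chart and the opposite sign pattern in another. For split signature this is harmless, since \((1,1)\) is symmetric under an overall sign change. I would therefore prefer the transitivity argument as the main proof and use the invariant description as a conceptual check.
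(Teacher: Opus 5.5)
Your proof is correct, but it reaches the conclusion by a different route than the paper. The paper argues entirely in coordinates. It computes the Levi matrix in the frame $\{Z_1,Z_2\}$ on $U_0\cap\{u_3\neq0\}$ and finds determinant $-1/(4|u_3|^2)<0$. It repeats the check on $U_3\cap\{w_2\neq0\}$. It then passes to all of $\Qtwotwo$ by appealing to local constancy of the signature on a connected hypersurface. Strictly speaking, that appeal presupposes nondegeneracy everywhere. What actually closes the step is the symmetry of $\Qtwotwo$ under permutations within $(z_0,z_1)$ and within $(z_2,z_3)$: the sets $\{z_i\neq0,\ z_k\neq0\}$, $i\in\{0,1\}$, $k\in\{2,3\}$, cover $\Qtwotwo$.

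Your transitivity argument does this step cleanly and without case analysis. You use one chart computation plus homogeneity under $\SU(2,2)$ acting by $CR$ automorphisms, and the overall sign ambiguity is harmless for $(1,1)$.

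Your invariant route is genuinely chart-free. You identify the Levi form at $[z]$ with $h$ on $z^{\perp_h}/\C z$, then split $\C^4=\Span\{z,w\}\oplus W$ with $\Span\{z,w\}$ a hyperbolic plane. This gives nondegeneracy and signature $(1,1)$ at every point at once, and it extends verbatim to $Q^{p,q}$. It is also the conceptual reason behind the paper's later statement that the Levi form of $M_{\mathcal S}$ is a restriction of the ambient Levi form. The paper's route, in exchange, produces the explicit frames and Levi matrices used afterwards.

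The step you flag as delicate is not actually a problem. On $U_0$ one has $h(z,z)=|z_0|^2\rho_0(u)$. Multiplying a defining function by a positive function rescales the Levi form on the complex tangent space by that same positive factor, because the extra terms contain $\rho_0$ or $\partial\rho_0$, which vanish there. So the sign is uniform across charts, and the invariant argument could serve as the main proof rather than merely as a check.
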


\begin{proof}
The local contact forms \(\theta_0\) and \(\theta_3\) above define the same
cooriented \(CR\) structure on overlapping charts. The explicit computation of the
Levi matrix in \eqref{eq:Levi-matrix-U0} shows that the determinant is everywhere
negative, hence the Levi form is nondegenerate and indefinite. Since the \(CR\)
dimension is \(2\), the only possible indefinite signature is \((1,1)\).
\end{proof}

On the affine chart \(U_0\), the hypersurface \(\Qtwotwo\) is given by
\[
1+|u_1|^2-|u_2|^2-|u_3|^2=0.
\]
This is precisely the indefinite hyperquadric considered by Sommer
\cite{Sommer1959} as a basic example of a Levi--nondegenerate \(CR\) manifold
which nevertheless admits a foliation by complex curves. As we shall see below,
this foliation is related to the twistor fibres.

\subsection{Twistor fibres and the involution \texorpdfstring{$j$}{j}}

After the local \(CR\) computations, we now return to the global twistor geometry of
the embedding \(\Qtwotwo\subset\CP^3\). The first issue is to identify, among the
projective lines contained in \(\Qtwotwo\), those that come from the ambient twistor
fibration \(\pi:\CP^3\to\HP^1\). The point of this subsection is that these
distinguished lines are characterized exactly by \(j\)-invariance.

As recalled in Section~\ref{sec:ambient-models}, the fibre of $\pi$ through
$[z]\in\CP^3$ is $\ell_z=\PP(\Span_\C\{z,Jz\})$; since $J$ preserves this plane,
every fibre is $j$-invariant, and conversely every $j$-invariant line arises this way
\cite{AtiyahHitchinSinger1978,SalamonViaclovsky2009}.
In particular, a projective line $\ell\subset\CP^3$ is a twistor fibre of
$\pi:\CP^3\to\HP^1$ if and only if it is $j$-invariant.

We now restrict to $\Qtwotwo$. Since $h(Jz,Jw)=\overline{h(z,w)}$ and $h(z,Jz)=0$,
every $h$-null vector $z$ satisfies $h(Jz,Jz)=0$, so the entire fibre $\ell_z$ lies
in $\Qtwotwo$ whenever $[z]\in\Qtwotwo$.

\begin{proposition}
The fibres of $\pi|_{\Qtwotwo}:\Qtwotwo\to S^3$ are exactly the $j$-invariant
projective lines contained in $\Qtwotwo$. Equivalently, for every $[z]\in\Qtwotwo$,
\[
\pi^{-1}(\pi([z]))\cap\Qtwotwo=\PP\bigl(\Span_{\C}\{z,Jz\}\bigr).
\]
\end{proposition}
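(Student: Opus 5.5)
The plan is to combine three facts already recorded in the paper: the fibres of $\pi$ are the lines $\ell_z=\PP(\Span_\C\{z,Jz\})$; these are exactly the $j$-invariant lines of $\CP^3$; and $\Qtwotwo=\pi^{-1}(S^3)$ by \eqref{eq:Q22-as-lift}. The displayed identity comes first. Take $[z]\in\Qtwotwo$. Since the fibres of $\pi$ partition $\CP^3$, the fibre through $[z]$ is $\pi^{-1}(\pi([z]))=\ell_z$. We have $\pi([z])\in S^3$, so this whole fibre lies in $\pi^{-1}(S^3)=\Qtwotwo$, and hence
\[
\pi^{-1}(\pi([z]))\cap\Qtwotwo=\ell_z=\PP\bigl(\Span_\C\{z,Jz\}\bigr).
\]
This containment can also be checked directly, without \eqref{eq:Q22-as-lift}. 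Expand $h(az+bJz,az+bJz)$ using $h(z,z)=0$, $h(Jz,Jz)=\overline{h(z,z)}=0$ and $h(z,Jz)=0$; every term vanishes. The identity $h(z,Jz)=0$ comes from $h(z,Jz)=z^*HS\overline z$: the matrix $HS$ is block diagonal with blocks $\pm\begin{pmatrix}0&-1\\1&0\end{pmatrix}$, hence skew-symmetric, so $z^*(HS)\overline z=\overline z^{\,T}(HS)\overline z=0$.

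Next comes the equivalence of the two formulations. The fibres of $\pi|_{\Qtwotwo}$ are therefore exactly the lines $\ell_z$ with $[z]\in\Qtwotwo$. Each of these is $j$-invariant, because $J$ preserves $\Span_\C\{z,Jz\}$ (using $J^2=-\Id$), and each is contained in $\Qtwotwo$ by the first step. Conversely, let $\ell\subset\Qtwotwo$ be a $j$-invariant line. By the cited characterization \cite{AtiyahHitchinSinger1978,SalamonViaclovsky2009}, $\ell=\ell_z$ for some $z$, and $[z]\in\ell\subset\Qtwotwo$. So $\ell$ is the fibre of $\pi|_{\Qtwotwo}$ over $\pi([z])\in S^3$.

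The one point needing care is that converse characterization: a $j$-invariant line $\PP(W)$ has the form $\ell_z$. I plan to argue this directly. Suppose $\PP(W)$ is $j$-invariant with $W$ a complex $2$-plane. Then $J(W)$ is a complex subspace whose projectivization is $j(\PP(W))=\PP(W)$, so $J(W)=W$. Pick any nonzero $z\in W$. Then $Jz\in W$, and $Jz$ is not a complex multiple of $z$: if $Jz=\lambda z$, then $-z=J^2z=J(\lambda z)=\overline\lambda Jz=|\lambda|^2z$, which is impossible. Hence $z$ and $Jz$ span $W$, and $\PP(W)=\ell_z$. This step also shows $j$ has no fixed points, so the argument is self-contained apart from the identification $\Qtwotwo=\pi^{-1}(S^3)$.
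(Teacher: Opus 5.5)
Your proposal is correct and follows essentially the paper's own argument: the fibre through $[z]$ is $\PP(\Span_\C\{z,Jz\})$, it lies in $\Qtwotwo$ because $h(z,Jz)=0$ and $h(Jz,Jz)=\overline{h(z,z)}=0$, and the $j$-invariant lines are exactly such fibres. The only difference is that the paper cites this last characterization from Atiyah--Hitchin--Singer and Salamon--Viaclovsky, whereas you prove it directly by showing that $Jz=\lambda z$ would force $|\lambda|^2=-1$, which makes your argument self-contained.
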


The twistor fibres are thus singled out by the single condition of \(j\)-invariance.
This gives the first projective characterization of the twistor geometry of
\(\Qtwotwo\), and it will serve as the basic reference point for the classification
of all projective lines contained in \(\Qtwotwo\) in the next section.

\section{Projective and twistor symmetries of \texorpdfstring{$\Qtwotwo$}{Q22}}\label{sec:symmetries}

We now turn to the symmetry theory of $\Qtwotwo\subset\CP^3$. As the flat
Levi--nondegenerate model of signature $(1,1)$, its full $CR$ automorphism group is
$\PSU(2,2)$, the standard symmetry group of the   hyperquadric in the
Tanaka--Chern--Moser framework. However, since
$\Qtwotwo$ is the twistor lift of the round $3$-sphere, not every projective symmetry
is relevant for the twistor geometry: the involution $j$ singles out the subgroup of
\emph{twistor-compatible} automorphisms, those commuting with $j$ and hence
preserving the family of twistor fibres
\cite{SalamonViaclovsky2009,Altavilla2025}. We first describe the full
projective stabilizer of $\Qtwotwo$, then isolate this twistor-compatible subgroup.
The ambient projective realization and the explicit matrix form of $j$ reduce both
problems to concrete linear algebra, and the resulting groups descend to conformal
symmetries of the base $3$-sphere.

\subsection{The full projective stabilizer of \texorpdfstring{$\Qtwotwo$}{Q22}}

With \(H\), \(h\), and \(\Qtwotwo\) as in \eqref{eq:Q22-homogeneous}, the projective
class \([A]\in\PGL(4,\C)\) acts on \(\CP^3\) by \([A]\cdot[z]=[Az]\). We
characterize those transformations preserving \(\Qtwotwo\).

\begin{proposition}\label{prop:full-stabilizer-Q22}
Let \(A\in GL(4,\C)\).  Then the following are equivalent:
\begin{enumerate}
\item[\rm (i)] the projective transformation \([A]\in \PGL(4,\C)\) preserves \(\Qtwotwo\);
\item[\rm (ii)] there exists \(\lambda\in \R^\times\) such that
\[
A^*HA=\lambda H.
\]
\end{enumerate}
In particular, the holomorphic projective stabilizer of \(\Qtwotwo\) is
\[
\mathbb P\{A\in GL(4,\C)\mid A^*HA=\lambda H \text{ for some }\lambda\in \R^\times\},
\]
and its identity component is naturally identified with \(\PSU(2,2)\).
\end{proposition}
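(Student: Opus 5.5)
The implication (ii)$\Rightarrow$(i) is immediate. If $A^*HA=\lambda H$ with $\lambda\in\R^\times$, then $h(Az,Az)=z^*A^*HAz=\lambda\,h(z,z)$. Hence $h(z,z)=0$ if and only if $h(Az,Az)=0$, so $[A]$ maps $\Qtwotwo$ onto itself. The condition is also invariant under rescaling $A\mapsto cA$, which sends $\lambda$ to $|c|^2\lambda$, so it descends to $\PGL(4,\C)$.

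For (i)$\Rightarrow$(ii), consider the two Hermitian forms $h(z,z)$ and $h'(z,z):=h(Az,Az)$, with matrices $H$ and $A^*HA$. By (i), the null cone of $h$ is contained in the null cone of $h'$. The key step is a real Nullstellensatz-type lemma: if a Hermitian form $h'$ vanishes on the null cone of an \emph{indefinite} nondegenerate Hermitian form $h$, then $h'=\lambda h$ for some real $\lambda$.

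I would prove this by hand.
\begin{enumerate}
\item[(a)] Work in the basis $e_0,\dots,e_3$. The vectors $e_a+ e^{i\phi}e_b$, with $a\in\{0,1\}$ and $b\in\{2,3\}$, are $h$-null for every $\phi$. Evaluating $h'$ on them gives $h'_{aa}+h'_{bb}+2\Re(e^{i\phi}h'_{ab})=0$ for all $\phi$. Hence $h'_{ab}=0$ and $h'_{aa}=-h'_{bb}$.
\item[(b)] This yields $h'_{00}=h'_{11}=-h'_{22}=-h'_{33}=:\lambda$.
\item[(c)] To kill the remaining entries $h'_{01}$ and $h'_{23}$, use null vectors such as $e_0+e^{i\phi}e_1+\sqrt2\,e_2$. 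Expanding $h'$ on these, with the terms already known, leaves $2\Re(e^{i\phi}h'_{01})=0$ for all $\phi$, so $h'_{01}=0$. The argument for $h'_{23}$ is symmetric.
\end{enumerate}
Thus $A^*HA=\lambda H$ with $\lambda$ real. Since $A$ is invertible, $\lambda\neq0$.

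This lemma is the only real obstacle, and the step where indefiniteness matters: it supplies enough null vectors to span all the coordinate directions. I expect the computation to be routine once the test vectors are chosen as above.

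For the final assertion, the stabilizer is by definition the projectivization of $\{A: A^*HA=\lambda H\}$. Take a representative with $\lambda>0$ and rescale by $|\lambda|^{-1/2}$; this gives $A^*HA=H$, so $A\in \U(2,2)$. Representatives with $\lambda<0$ exist: for instance, the matrix swapping $(z_0,z_1)\leftrightarrow(z_2,z_3)$ gives $A^*HA=-H$. These form the other coset and swap the two sides of $\Qtwotwo$, so they do not lie in the identity component.

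The sign of $\lambda$ is a continuous invariant on the stabilizer, so the identity component is contained in $\mathbb P\,\U(2,2)=\U(2,2)/\U(1)$. Every element of $\U(2,2)$ can be rescaled by a unit scalar to have determinant $1$. Hence $\mathbb P\,\U(2,2)=\SU(2,2)/\{\text{scalars}\}=\PSU(2,2)$, which is connected. It follows that the identity component is exactly $\PSU(2,2)$.
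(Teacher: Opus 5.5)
Your proposal is correct and follows the same route as the paper: a direct computation for (ii)$\Rightarrow$(i), and for (i)$\Rightarrow$(ii) the fact that a Hermitian form vanishing on the null cone of $h$ is a real multiple of $H$. The paper only asserts this proportionality, and only sketches the identification of the identity component with $\PSU(2,2)$. Your explicit null test vectors prove the proportionality (needing only inclusion of null cones), and your sign-of-$\lambda$ argument justifies the identity component.
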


\begin{proof}
Assume first that \(A^*HA=\lambda H\) for some \(\lambda\in \R^\times\).  Then for every
\(z\in \C^4\setminus\{0\}\),
\[
h(Az,Az)=z^*A^*HAz=\lambda\,z^*Hz=\lambda\,h(z,z),
\]
so \([A]\) preserves the null cone of \(h\), hence \(\Qtwotwo\).
Conversely, if \([A]\) preserves \(\Qtwotwo\), then \(K:=A^*HA\) is a nondegenerate
Hermitian matrix whose projectivized null cone coincides with that of \(H\). Since two
such forms with the same null cone are proportional, \(K=\lambda H\) for some
\(\lambda\in\C^\times\), and Hermiticity forces \(\lambda\in\R^\times\).
\end{proof}

Normalizing \(A^*HA=H\), taking \(\det A=1\), and quotienting by the center gives the
connected symmetry group \(\PSU(2,2)\), the standard flat automorphism group of the
  hyperquadric. For the twistor
geometry one must additionally require compatibility with \(j\), i.e.\ that \([A]\)
preserve the family of twistor fibres. This leads to the smaller subgroup studied next.

\subsection{The involution \texorpdfstring{$j$}{j} and the subgroup commuting with it}

The twistor-compatible symmetries of $\Qtwotwo$ are those projective transformations
that commute with $j$ and hence preserve the family of twistor fibres. Recall from
Section~\ref{sec:ambient-models} that $j$ is the projectivization of $J(z)=S\overline{z}$,
with $J^2=-\Id$. The condition $[A]\circ j=j\circ[A]$ is, after rescaling, equivalent to
\begin{equation}\label{eq:AS=SbarA}
AS=S\overline{A}.
\end{equation}

\begin{proposition}\label{prop:commuting-with-j}
The element $[A]\in\PGL(4,\C)$ commutes with $j$ if and only if $A$ has the form
\begin{equation}\label{eq:quaternionic-normal-form-section}
A=
\begin{pmatrix}
a&b&c&d\\
-\overline b&\overline a&-\overline d&\overline c\\
e&f&g&h\\
-\overline f&\overline e&-\overline h&\overline g
\end{pmatrix},
\qquad a,b,c,d,e,f,g,h\in\C.
\end{equation}
\end{proposition}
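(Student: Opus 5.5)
The plan is to reduce the projective condition to the linear condition \eqref{eq:AS=SbarA}, and then solve that condition block by block. Most of the content is the rescaling step. The block computation is routine.

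\emph{Step 1: reduction to an identity of antilinear maps.} Suppose $[A]\circ j=j\circ[A]$. Then for every $z\in\C^4\setminus\{0\}$ the vectors $AS\overline z$ and $S\overline{A}\,\overline z=S\overline{Az}$ are proportional. Both $z\mapsto AS\overline z$ and $z\mapsto S\overline A\,\overline z$ are injective antilinear maps, so $T:=(S\overline A)^{-1}AS$ is a $\C$-linear invertible map with every nonzero vector as an eigenvector. Hence $T=\mu\,\Id$ for some $\mu\in\C^\times$, that is,
\[
AS=\mu\,S\overline A .
\]
This is the standard fact that a linear map fixing every line is scalar.

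\emph{Step 2: normalising the scalar.} Since $S$ is real and $S^2=-\Id$, the relation gives $A=\mu S\overline A S^{-1}$. Conjugating it gives $\overline A=\overline\mu\, S A S^{-1}$. Substituting the second into the first yields
\[
A=|\mu|^2 S^2AS^{-2}=|\mu|^2A,
\]
so $|\mu|=1$. Write $\mu=e^{i\varphi}$ and replace $A$ by $cA$ with $c=e^{-i\varphi/2}$, which does not change $[A]$. Then
\[
(cA)S=c\mu\,S\overline A=\frac{c\mu}{\overline c}\,S\,\overline{cA}=S\,\overline{cA},
\]
because $c/\overline c=e^{-i\varphi}=\mu^{-1}$. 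So we may assume \eqref{eq:AS=SbarA} holds exactly. Conversely, \eqref{eq:AS=SbarA} clearly implies $[A]\circ j=j\circ[A]$. This normalisation is the step I regard as the main point; the rest is bookkeeping.

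\emph{Step 3: block computation.} Write $S=\diag(s,s)$ with $s=\begin{pmatrix}0&-1\\1&0\end{pmatrix}$, and split $A$ into $2\times2$ blocks $X$. Since $S$ is block diagonal, \eqref{eq:AS=SbarA} is equivalent to $Xs=s\overline X$ for each block. For $X=\begin{pmatrix}x&y\\ z&w\end{pmatrix}$ we have
\[
Xs=\begin{pmatrix}y&-x\\ w&-z\end{pmatrix},\qquad
s\overline X=\begin{pmatrix}-\overline z&-\overline w\\ \overline x&\overline y\end{pmatrix}.
\]
Comparing entries gives $w=\overline x$ and $z=-\overline y$, so
\[
X=\begin{pmatrix}x&y\\-\overline y&\overline x\end{pmatrix}.
\]
Applying this to the four blocks gives exactly the form \eqref{eq:quaternionic-normal-form-section}. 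Conversely, every matrix of that form satisfies \eqref{eq:AS=SbarA} by the same computation, and hence commutes with $j$ whenever it is invertible.
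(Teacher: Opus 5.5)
Your proof is correct and follows the same route as the paper: reduce the projective commutation condition to $AS=S\overline{A}$, then solve that equation entrywise. Your $2\times 2$ block version of the computation is simply tidier bookkeeping. The paper only asserts the reduction ``after rescaling'' without argument. Your Steps 1--2 (a linear map fixing every line is scalar, then $|\mu|=1$ and rescaling by $e^{-i\varphi/2}$) supply exactly that justification, and they correctly read the statement as being about a suitably normalized representative of $[A]$.
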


\begin{proof}
The condition \eqref{eq:AS=SbarA} is equivalent to $AJ=JA$. Writing $A=(a_{rs})$
and equating the entries of $AS$ and $S\overline{A}$ yields exactly the normal form
\eqref{eq:quaternionic-normal-form-section}.
\end{proof}

The form \eqref{eq:quaternionic-normal-form-section} is the standard complex
realization of a quaternionic $2\times2$ matrix: under $\C^4\simeq\HH^2$,
$(z_0,z_1,z_2,z_3)\leftrightarrow(z_0+jz_1,z_2+jz_3)$, the matrices satisfying
\eqref{eq:AS=SbarA} are exactly the $\C$-linear maps induced by right $\HH$-linear
endomorphisms of $\HH^2$, i.e.\ elements of $GL(2,\HH)$ \cite{SalamonViaclovsky2009}.

Intersecting with the stabilizer of $\Qtwotwo$ from Proposition~\ref{prop:full-stabilizer-Q22}
gives the following.

\begin{corollary}\label{cor:j-centralizer-Q22}
The element $[A]\in\PGL(4,\C)$ preserves $\Qtwotwo$ and commutes with $j$ if and only if it
admits a representative of the form \eqref{eq:quaternionic-normal-form-section}
satisfying $A^*HA=\lambda H$ for some $\lambda\in\R^\times$. The identity component
of this subgroup is $PSp(1,1)\subset\PSU(2,2)$.
\end{corollary}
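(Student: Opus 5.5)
The equivalence is a direct intersection of Proposition~\ref{prop:full-stabilizer-Q22} with Proposition~\ref{prop:commuting-with-j}. The only care needed is with representatives. If $[A]$ commutes with $j$, Proposition~\ref{prop:commuting-with-j} gives a representative $A$ of the form \eqref{eq:quaternionic-normal-form-section}. Any other representative $\mu A$ with $\mu\in\C^\times$ satisfies $\mu A S=S\overline{\mu A}$ only when $\mu\in\R^\times$, so the representatives of the quaternionic form are unique up to a nonzero real scalar. If $[A]$ also preserves $\Qtwotwo$, Proposition~\ref{prop:full-stabilizer-Q22} says every representative satisfies $A^*HA=\lambda H$ with $\lambda\in\R^\times$. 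Rescaling by a real $\mu$ replaces $\lambda$ by $\mu^2\lambda$, which stays real, so the chosen quaternionic representative works. The converse direction is immediate from the two propositions.

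For the identity component I would pass to quaternionic language. Under $\C^4\simeq\HH^2$, the matrices of the form \eqref{eq:quaternionic-normal-form-section} are exactly $GL(2,\HH)$. The form $h$ is the complex part of the quaternionic Hermitian form $\langle q,p\rangle=\overline{q_0}p_0-\overline{q_1}p_1$, with $|q_0|^2=|z_0|^2+|z_1|^2$ and similarly for $q_1$. Hence $A^*HA=\lambda H$ says that $A$ is a quaternionic similitude of a form of signature $(1,1)$.

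Next I would normalize. Real positive rescaling turns $\lambda$ into $\pm1$. Elements with $\lambda<0$ swap the two sides of the null cone, so they cannot lie in the identity component; an example is the swap $(q_0,q_1)\mapsto(q_1,q_0)$. The component containing $\Id$ therefore lies in the image of
\[
\{A\in GL(2,\HH)\mid A^*HA=H\}=Sp(1,1),
\]
taken modulo the real scalars $\pm1$, which gives $PSp(1,1)$.

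The remaining point is to check that $Sp(1,1)$ is connected and that its image sits inside $\PSU(2,2)$. For containment, every element of $Sp(1,1)$ lies in $U(2,2)$ and has determinant $1$: quaternionic matrices have real positive complex determinant, and unitarity forces $|\det|=1$. Connectedness is the step I expect to need the most justification. I would argue via the polar (Cartan) decomposition $Sp(1,1)=(Sp(1)\times Sp(1))\cdot\exp(\mathfrak p)$, whose compact factor $Sp(1)\times Sp(1)=S^3\times S^3$ is connected. Alternatively one can cite $Sp(1,1)\cong\mathrm{Spin}(4,1)$. Either route identifies the identity component with $PSp(1,1)\subset\PSU(2,2)$.
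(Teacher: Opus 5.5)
Your proof is correct and takes the same route as the paper: you intersect Propositions~\ref{prop:full-stabilizer-Q22} and~\ref{prop:commuting-with-j}, then identify the normalized group with $Sp(1,1)$ via $\C^4\simeq\HH^2$. You also supply details that the paper's one-line argument skips. These are that quaternionic representatives are unique only up to real scalars, that the $\lambda<0$ coset (e.g.\ the swap $R$) must be excluded from the identity component rather than ``normalized'' to $\lambda=1$, and that $Sp(1,1)$ is connected with $\det=1$, so that $PSp(1,1)$ really is the identity component sitting inside $\PSU(2,2)$.
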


\begin{proof}
The first statement combines Propositions~\ref{prop:full-stabilizer-Q22}
and~\ref{prop:commuting-with-j}. For the second, normalizing $\lambda=1$ gives
matrices satisfying both $AS=S\overline{A}$ and $A^*HA=H$; under $\C^4\simeq\HH^2$,
the form $h$ becomes the quaternionic Hermitian form of signature $(1,1)$, preserved
by $Sp(1,1)$, whence $PSp(1,1)$ after projectivization.
\end{proof}

Geometrically, if $[A]$ commutes with $j$ then
$[A](\PP(\Span_\C\{z,Jz\}))=\PP(\Span_\C\{Az,JAz\})$,
so $[A]$ maps twistor fibres to twistor fibres. Thus $PSp(1,1)$ is exactly the group
of projective automorphisms of $\Qtwotwo$ preserving the twistor fibration
\cite{SalamonViaclovsky2009,Porter2021,Altavilla2025}.

\begin{remark}
In matrix terms, $[A]$ belongs to this subgroup if and only if it has the form
\eqref{eq:quaternionic-normal-form-section} with
\begin{align*}
\overline{a}\,c+b\,\overline{d}-\overline{e}\,g-f\,\overline{h}&=0,\\
\overline{a}\,d-b\,\overline{c}-\overline{e}\,h+f\,\overline{g}&=0,\\
|a|^2+|b|^2-|e|^2-|f|^2&=-\bigl(|c|^2+|d|^2-|g|^2-|h|^2\bigr)\neq 0.
\label{eq:explicit-jQ-condition-3}
\end{align*}
Setting $\lambda=|a|^2+|b|^2-|e|^2-|f|^2$ and defining
\[
u=\begin{pmatrix}a\\-\overline{b}\\e\\-\overline{f}\end{pmatrix},\qquad
v=\begin{pmatrix}c\\-\overline{d}\\g\\-\overline{h}\end{pmatrix},
\]
the columns of $A$ are $[\,u\ \ Ju\ \ v\ \ Jv\,]$ and the conditions above become
$h(u,v)=0$, $h(u,Jv)=0$, $h(u,u)=-h(v,v)\neq 0$: the subgroup consists exactly of
the projectivized quaternionic-linear transformations preserving $h$ up to a real
nonzero factor.
\end{remark}

\begin{remark}
Any $[A]$ commuting with $j$ preserves twistor fibres and therefore descends to a
transformation of $S^4$. If it also preserves $\Qtwotwo=\pi^{-1}(S^3)$, the induced
transformation preserves $S^3\subset S^4$. Thus $PSp(1,1)$ may be viewed as the
twistor lift of the conformal symmetry group of the pair $(S^4,S^3)$.
\end{remark}
\section{Holomorphic curves contained in \texorpdfstring{$\Qtwotwo$}{Q22}}\label{sec:curves}
Holomorphic curves in $\Qtwotwo$ are closely related to the null directions of the
Levi form and to its split signature $(1,1)$; in the
projective model they also admit a concrete interpretation in terms of lines in $\CP^3$
and the twistor geometry of $S^3$.

We shall show that $\Qtwotwo$ contains exactly two families of projective lines, i.e.  the
twistor fibres, parametrized by points of $S^3$, and a complementary family of
transverse lines projecting to round $2$-spheres in the base.

\subsection{The two rulings by projective lines}
We now describe the projective lines contained in $\Qtwotwo$. \par

It is convenient to use the block decomposition
\[
\C^4=\C^2_+\oplus\C^2_-,
\qquad
[z_0:z_1:z_2:z_3]=[z:w],
\qquad
z=\begin{pmatrix}z_0\\z_1\end{pmatrix},\quad
w=\begin{pmatrix}z_2\\z_3\end{pmatrix}.
\]
In these coordinates $h((z,w),(z,w))=\|z\|^2-\|w\|^2$, so that
\[
\Qtwotwo=\bigl\{[z:w]\in\PP(\C^2_+\oplus\C^2_-)\;\big|\;\|z\|^2=\|w\|^2\bigr\}.
\]
For $A\in U(2)$, set
\[
L_A:=\{(z,Az)\mid z\in\C^2\}\subset\C^2_+\oplus\C^2_-,
\qquad
\ell_A:=\PP(L_A)\subset\CP^3.
\]
Since $A$ is unitary, every vector $(z,Az)$ is $h$-null, hence $\ell_A\subset\Qtwotwo$.
We now show that every projective line in $\Qtwotwo$ arises this way.

\begin{theorem}\label{thm:all-lines-Q22}
A projective line \(\ell\subset \CP^3\) is contained in \(\Qtwotwo\) if and only if
\[
\ell=\ell_A
\qquad\text{for a unique }A\in U(2).
\]
Equivalently, the projective lines contained in \(\Qtwotwo\) are in one-to-one
correspondence with the unitary group \(U(2)\).
\end{theorem}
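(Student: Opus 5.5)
The plan is to work at the level of the $2$-dimensional subspace $L\subset\C^4$ with $\ell=\PP(L)$, and to use the block decomposition $\C^4=\C^2_+\oplus\C^2_-$ fixed above. The sufficiency direction ($\ell_A\subset\Qtwotwo$ for $A\in U(2)$) has already been observed. For the converse, we first note that $\ell\subset\Qtwotwo$ means $h(x,x)=0$ for every $x\in L$. By polarization, using the identity $h(x,y)=\tfrac14\sum_{k=0}^{3} i^{-k}\,h(x+i^k y,x+i^k y)$, this is equivalent to $h|_{L\times L}\equiv 0$. In other words, $L$ is a totally $h$-isotropic $2$-plane, that is, a maximal isotropic subspace for the signature $(2,2)$ form $h$.

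Next, we show that $L$ is a graph over $\C^2_+$. Let $p_\pm:\C^4\to\C^2_\pm$ be the two block projections. If $(0,w)\in L$, then $0=h((0,w),(0,w))=-\|w\|^2$, so $w=0$. Hence $p_+|_L$ is injective, and by dimension count it is an isomorphism $L\to\C^2_+$. It follows that there is a unique linear map $A:\C^2\to\C^2$ with $L=\{(z,Az)\mid z\in\C^2\}$, namely $A=p_-\circ(p_+|_L)^{-1}$.

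We then check that $A$ is unitary. The isotropy gives $\|z\|^2-\|Az\|^2=0$ for all $z$, so $A$ is an isometry and hence $A\in U(2)$. (Equivalently, polarizing gives $A^*A=\Id$.) This yields $\ell=\ell_A$.

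Uniqueness holds because $A$ is recovered from $L$ as above. Concretely, if $L_A=L_B$, then for every $z$ the vector $(z,Az)$ lies in $L_B$, so $(z,Az)=(z',Bz')$ for some $z'$. Comparing first components forces $z'=z$, and then $Az=Bz$. Since $\ell$ determines $L$, the map $A\mapsto\ell_A$ is therefore a bijection between $U(2)$ and the set of lines contained in $\Qtwotwo$.

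None of these steps is really hard. The only point requiring care is the passage from the quadratic condition $h(x,x)=0$ on $L$ to total isotropy via polarization, together with the observation that $L\cap\C^2_-=0$ because $h$ is negative definite on $\C^2_-$. This second observation is where the split signature enters, and it is what makes every such plane a graph over $\C^2_+$.
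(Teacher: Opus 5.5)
Your proposal is correct and follows essentially the same route as the paper: the kernel of $p_+|_L$ is trivial because $h$ is negative definite on $\C^2_-$, so $L$ is the graph of a unique $A$, nullity forces $\|Az\|=\|z\|$, and uniqueness comes from the graph determining the map. The polarization step to total isotropy is harmless but not actually needed, since only $h(x,x)=0$ is ever used.
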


\begin{proof}
Let \(\ell=\PP(L)\subset \Qtwotwo\), where \(L\subset \C^2_+\oplus \C^2_-\) is a
complex \(2\)-plane.  Since \(\ell\subset \Qtwotwo\), every vector of \(L\) is
\(h\)-null, so
\[
h|_L\equiv 0.
\]
Consider the projection
\[
p_+:L\longrightarrow \C^2_+,
\qquad
(z,w)\longmapsto z.
\]
We claim that \(p_+\) is injective.  Indeed, if \((0,w)\in L\), then
\[
0=h((0,w),(0,w))=-\|w\|^2,
\]
hence \(w=0\).  Since \(\dim_{\C}L=2=\dim_{\C}\C^2_+\), the map \(p_+\) is an
isomorphism.  Therefore \(L\) is the graph of a unique complex linear map
\[
A:\C^2_+\longrightarrow \C^2_-,
\qquad
L=L_A=\{(z,Az)\mid z\in \C^2\}.
\]
The nullity condition now gives, for every \(z\in \C^2\),
\[
0=h((z,Az),(z,Az))=\|z\|^2-\|Az\|^2,
\]
so \(\|Az\|=\|z\|\) for all \(z\), that is, \(A\in U(2)\).

Conversely, if \(A\in U(2)\), then
\[
h((z,Az),(z,Az))=\|z\|^2-\|Az\|^2=0
\qquad
\forall\,z\in \C^2,
\]
hence every point of \(\ell_A\) lies on \(\Qtwotwo\).  Uniqueness is immediate from the
fact that the graph of a linear map is uniquely determined by the map itself.
\end{proof}

We now determine which of these lines are twistor fibres.  Let
\[
J_0=
\begin{pmatrix}
0&-1\\
1&0
\end{pmatrix},
\]
so that the quaternionic anti-linear map \(J\) on \(\C^4=\C^2\oplus \C^2\) is given by
\[
J(z,w)=(J_0\overline z,\;J_0\overline w),
\]
and its projectivization is precisely the involution \(j\).  As observed earlier, the
twistor fibres are exactly the \(j\)-invariant projective lines.  This translates into a
simple condition on the matrix \(A\).

\begin{theorem}\label{thm:fibres-vs-lines}
Let \(A\in U(2)\). Then the following conditions are equivalent:
\begin{enumerate}
\item[\rm(i)] \(\ell_A\) is a twistor fibre;
\item[\rm(ii)] \(AJ_0=J_0\overline{A}\);
\item[\rm(iii)] \(A\in SU(2)\).
\end{enumerate}
\end{theorem}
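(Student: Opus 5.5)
The plan is to prove (i)$\Leftrightarrow$(ii) by direct linear algebra with the anti-linear map $J$, and then (ii)$\Leftrightarrow$(iii) by a $2\times 2$ computation in $U(2)$.

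For (i)$\Leftrightarrow$(ii), recall from the discussion following \eqref{eq:j-section2} that a line is a twistor fibre if and only if it is $j$-invariant. On the level of planes, this means $J(L_A)=L_A$, because $j$ is the projectivization of $J$ and $J$ is anti-linear, so $J(L_A)$ is again a complex $2$-plane. For $(z,Az)\in L_A$ we have
\[
J(z,Az)=(J_0\overline z,\;J_0\overline{A}\,\overline z).
\]
This vector lies in $L_A$ precisely when its second component equals $A$ applied to its first, i.e.\ when $J_0\overline A\,\overline z=AJ_0\overline z$. As $z$ ranges over $\C^2$, so does $\overline z$. Hence $J(L_A)\subset L_A$ holds if and only if $AJ_0=J_0\overline A$. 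Inclusion suffices, since both planes are $2$-dimensional. One could also argue via the graph uniqueness from Theorem~\ref{thm:all-lines-Q22}: $J(L_A)$ is the graph of $J_0\overline A J_0^{-1}$, so it equals $L_A$ exactly when this map is $A$.

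For (ii)$\Leftrightarrow$(iii), write a general $A\in U(2)$ as
\[
A=\begin{pmatrix}\alpha&-\mu\overline\beta\\ \beta&\mu\overline\alpha\end{pmatrix},
\qquad |\alpha|^2+|\beta|^2=1,\quad |\mu|=1,\quad \det A=\mu.
\]
Note also that $J_0\overline A J_0^{-1}$ is the adjugate-type matrix
\[
\begin{pmatrix}\overline{a_{22}}&-\overline{a_{21}}\\ -\overline{a_{12}}&\overline{a_{11}}\end{pmatrix},
\]
which for unitary $A$ equals $\overline{\det A}\,A$. This follows from the identity $A^{-1}=A^*$ combined with the adjugate formula $A^{-1}=\frac{1}{\det A}\operatorname{adj}A$. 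Condition (ii) therefore reads $\overline{\det A}\,A=A$, i.e.\ $\det A=1$ (here $\overline{\det A}=1$ and $|\det A|=1$ give $\det A=1$).

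The only real point requiring care is this adjugate identity $J_0\overline AJ_0^{-1}=\overline{\det A}\,A$ for $A\in U(2)$, together with the sign conventions in $J_0$. I would verify it entrywise with the parametrization above: the equation $J_0\overline AJ_0^{-1}=A$ becomes $\overline\mu\alpha=\alpha$ and $\overline\mu\beta=\beta$. Since $(\alpha,\beta)\neq 0$, this forces $\mu=1$, which is exactly $A\in SU(2)$.
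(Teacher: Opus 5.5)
Your proof is correct. The step (i)$\Leftrightarrow$(ii) is the same as in the paper: you check $J$-stability of the graph $L_A$. Your remark that $J(L_A)$ is the graph of $J_0\overline{A}J_0^{-1}$ is a clean way to phrase it.

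For (ii)$\Leftrightarrow$(iii) the routes differ. The paper solves $AJ_0=J_0\overline{A}$ entrywise for an arbitrary $2\times 2$ matrix and obtains the quaternionic form $\bigl(\begin{smallmatrix}\alpha&\beta\\-\overline\beta&\overline\alpha\end{smallmatrix}\bigr)$. Unitarity then gives $|\alpha|^2+|\beta|^2=1$, hence $\det A=1$; conversely, every $SU(2)$ matrix has this form. You instead use the identity $J_0\overline{A}J_0^{-1}=\overline{\det A}\,A$, valid for unitary $A$, which turns (ii) into $\overline{\det A}=1$ in one line. Your entrywise check of this identity with the parametrization by $(\alpha,\beta,\mu)$ is also right.

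Each approach buys something different. The paper's computation describes the solution set of (ii) in all of $M_2(\C)$: it is the quaternions, consistent with the normal form of Proposition~\ref{prop:commuting-with-j}. Condition (iii) then appears structurally as $U(2)\cap\HH=Sp(1)=SU(2)$. Your identity gives more information about the non-fibre lines: it yields $J(L_A)=L_{\overline{\det A}\,A}$ for every $A\in U(2)$. Taking $A=e^{i\theta}U$, this is exactly $j(\ell_A)=\ell_{e^{-i\theta}U}$, the formula of Corollary~\ref{cor:two-line-families}, with no further computation.
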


\begin{proof}
We already know that
$\ell_A$ is a twistor fibre if and only if it is $j$-invariant, i.e.\ if and only if
$L_A$ is $J$-stable. Since $L_A=\{(z,Az)\mid z\in\C^2\}$, $J$-stability requires
$J(z,Az)=(J_0\overline{z},J_0\overline{A}\,\overline{z})\in L_A$ for all $z$, which
gives $AJ_0\overline{z}=J_0\overline{A}\,\overline{z}$ for all $z$, i.e.\ condition
(ii). This proves (i)$\iff$(ii).

For (ii)$\iff$(iii), write $A=\bigl(\begin{smallmatrix}\alpha&\beta\\\gamma&\delta
\end{smallmatrix}\bigr)\in U(2)$. Then $AJ_0=J_0\overline{A}$ forces
$\gamma=-\overline{\beta}$ and $\delta=\overline{\alpha}$, so
\[
A=\begin{pmatrix}\alpha&\beta\\-\overline{\beta}&\overline{\alpha}\end{pmatrix}.
\]
Unitarity gives $|\alpha|^2+|\beta|^2=1$, hence $\det A=1$ and $A\in SU(2)$.
Conversely, every $A\in SU(2)$ has this form and satisfies $AJ_0=J_0\overline{A}$.
\end{proof}

The theorem shows that the twistor fibres form the distinguished subfamily
\[
\{\ell_A\mid A\in SU(2)\}\subset \{\ell_A\mid A\in U(2)\}.
\]
The remaining lines are the ones that are not preserved by \(j\).  Since every unitary
matrix admits a decomposition
\[
A=e^{i\theta}U,
\qquad
U\in SU(2),
\qquad
\theta\in \R/2\pi\Z,
\]
this second family is measured precisely by the phase \(e^{i\theta}\).

\begin{corollary}\label{cor:two-line-families}
The projective lines contained in \(\Qtwotwo\) split into two twistorially distinct
types:
\begin{enumerate}
\item[\rm (a)] the \(j\)-invariant lines, equivalently the twistor fibres, corresponding
to \(A\in SU(2)\);
\item[\rm (b)] the non-\(j\)-invariant lines, corresponding to matrices
\(A=e^{i\theta}U\in U(2)\) with \(U\in SU(2)\) and \(e^{i\theta}\neq \pm1\).
\end{enumerate}
Moreover, if \(A=e^{i\theta}U\), then
\[
j(\ell_A)=\ell_{e^{-i\theta}U}.
\]
In particular, every non-fibre line occurs in a \(j\)-conjugate pair.
\end{corollary}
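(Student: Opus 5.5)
The corollary has three parts. Parts (a) and (b) are a direct repackaging of Theorems~\ref{thm:all-lines-Q22} and~\ref{thm:fibres-vs-lines}. The formula for $j(\ell_A)$ is a short computation with $J(z,w)=(J_0\overline z,J_0\overline w)$. The last clause follows from that formula together with the criterion for fibres.

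\textbf{Dichotomy.} By Theorem~\ref{thm:all-lines-Q22}, every line in $\Qtwotwo$ is $\ell_A$ for a unique $A\in U(2)$. By Theorem~\ref{thm:fibres-vs-lines}, $\ell_A$ is $j$-invariant, equivalently a twistor fibre, exactly when $A\in SU(2)$. This gives (a). For (b), write $A=e^{i\theta}U$ with $U\in SU(2)$; this is possible since $\det A\in S^1$ admits square roots. Then $\det A=e^{2i\theta}$, so $A\in SU(2)$ if and only if $e^{2i\theta}=1$, i.e.\ $e^{i\theta}=\pm1$. The pair $(\theta,U)$ is only defined up to $(\theta+\pi,-U)$, but the condition $e^{i\theta}\neq\pm1$ is unaffected by this ambiguity. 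Hence the complementary family is exactly as described.

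\textbf{The conjugation formula.} Compute $j(\ell_A)=\PP(J(L_A))$. For $z\in\C^2$,
\[
J(z,Az)=(J_0\overline z,\;J_0\overline A\,\overline z).
\]
Set $z'=J_0\overline z$, which ranges over all of $\C^2$ as $z$ does. Since $J_0^{-1}=-J_0$, we get $\overline z=-J_0z'$, so the second component is $-J_0\overline A J_0\,z'$. Thus $J(L_A)=L_{A'}$ with $A'=-J_0\overline A J_0$.

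Now take $A=e^{i\theta}U$. Then $\overline A=e^{-i\theta}\overline U$. For $U\in SU(2)$, Theorem~\ref{thm:fibres-vs-lines}(ii) gives $UJ_0=J_0\overline U$, i.e.\ $\overline U=-J_0UJ_0$. Substituting,
\[
A'=-e^{-i\theta}J_0(-J_0UJ_0)J_0=e^{-i\theta}J_0^2UJ_0^2=e^{-i\theta}U,
\]
using $J_0^2=-\Id$. Hence $j(\ell_A)=\ell_{e^{-i\theta}U}$.

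\textbf{Pairing.} If $e^{i\theta}\neq\pm1$, then $e^{-i\theta}\neq e^{i\theta}$, so $e^{-i\theta}U\neq e^{i\theta}U$. By the uniqueness in Theorem~\ref{thm:all-lines-Q22}, $j(\ell_A)\neq\ell_A$. Since $j$ is an involution, the non-fibre lines split into $j$-conjugate pairs $\{\ell_A,j(\ell_A)\}$.

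The only delicate step is keeping track of signs in $A'=-J_0\overline AJ_0$. I would check it on $U=\Id$, where $A'=e^{-i\theta}\Id$ as expected.
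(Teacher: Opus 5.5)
Your proof is correct and follows essentially the paper's route: parts (a) and (b) are read off from Theorems~\ref{thm:all-lines-Q22} and~\ref{thm:fibres-vs-lines}, and the conjugation formula comes from applying $J$ to $(z,Az)$ and using $UJ_0=J_0\overline U$. You first derive the general identity $J(L_A)=L_{-J_0\overline A J_0}$ and then specialize, whereas the paper substitutes directly; likewise you get $j(\ell_A)\neq\ell_A$ from the uniqueness of $A$ in Theorem~\ref{thm:all-lines-Q22} rather than from Theorem~\ref{thm:fibres-vs-lines}, and both routes are fine.
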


\begin{proof}
Only the last statement remains to be checked.  Let \(A=e^{i\theta}U\) with
\(U\in SU(2)\).  Using \(UJ_0=J_0\overline U\), we compute
\[
J(z,Az)
=
(J_0\overline z,\;J_0\overline{Az})
=
(J_0\overline z,\;e^{-i\theta}J_0\overline U\,\overline z)
=
(J_0\overline z,\;e^{-i\theta}U J_0\overline z).
\]
Thus \(J(L_A)=L_{e^{-i\theta}U}\), and therefore
\[
j(\ell_A)=\ell_{e^{-i\theta}U}.
\]
If \(e^{i\theta}\neq \pm1\), then \(\ell_A\) is not \(j\)-invariant by
Theorem~\ref{thm:fibres-vs-lines}, so the two lines are distinct.
\end{proof}

The theorem shows that the twistor fibres form the distinguished subfamily
\[
\{\ell_A\mid A\in SU(2)\}\subset \{\ell_A\mid A\in U(2)\}.
\]
The remaining lines are those not preserved by $j$. Every unitary matrix admits a
unique decomposition $A=e^{i\theta}U$ with $U\in SU(2)$ and $\theta\in(0,\pi)$
after choosing the representative angle in $(0,\pi)$ (the cases $\theta=0$ and
$\theta=\pi$ give $A\in SU(2)$ and $A\in-SU(2)$, both of which yield twistor
fibres by Theorem~\ref{thm:fibres-vs-lines}). The second family of lines is
therefore parametrized by $\theta\in(0,\pi)$ together with $U\in SU(2)$, and the
phase $e^{i\theta}$ is the complete twistorial invariant distinguishing them from
the fibres.

\subsection{Incidence, intersections, and sphere geometry}

We now relate the incidence geometry of the lines \(\ell_A\subset \Qtwotwo\) to the
conformal geometry of the base \(S^3\).  The guiding idea is very much in the spirit of
Shapiro's twistor interpretation of sphere geometry: a line upstairs should be read
downstairs through the way it meets the family of twistor fibres \cite{Shapiro2013}.
In our setting this leads to a particularly simple description, because the fibres are
already parametrized by \(SU(2)\cong S^3\).

Using Theorem~\ref{thm:fibres-vs-lines}, we identify the base \(S^3\) with the family of
twistor fibres
\[
\{\ell_x\mid x\in SU(2)\},
\]
where \(\ell_x\) denotes the fibre corresponding to the matrix \(x\in SU(2)\).  Let
\[
A=e^{i\theta}U\in U(2),
\qquad
U\in SU(2),
\qquad
\theta\in \R/2\pi\Z.
\]
We wish to describe the image \(\pi(\ell_A)\subset S^3\).

\begin{theorem}\label{thm:line-to-sphere}
Let \(A=e^{i\theta}U\in U(2)\), with \(U\in SU(2)\).  Then
\begin{equation}\label{eq:SigmaA-def}
\pi(\ell_A)
=
\Sigma_A
:=
\{x\in SU(2)\mid \det(A-x)=0\}.
\end{equation}
Equivalently,
\begin{equation}\label{eq:SigmaA-conjugacy}
\Sigma_A
=
U\,C_\theta,
\qquad
C_\theta
:=
\{y\in SU(2)\mid tr(y)=2\cos\theta\}.
\end{equation}
In particular:
\begin{enumerate}
\item[\rm (i)] if \(A\in SU(2)\), then \(\Sigma_A=\{U\}\), so \(\ell_A\) is a twistor
fibre;
\item[\rm (ii)] if \(A\notin SU(2)\), then \(\Sigma_A\) is a round \(2\)-sphere in
\(S^3\).
\end{enumerate}
\end{theorem}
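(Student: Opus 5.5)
The plan is to reduce the statement to linear algebra inside the graph description of Theorem~\ref{thm:all-lines-Q22}. The base point $x\in SU(2)$ is identified with the twistor fibre $\ell_x=\PP(L_x)$ (Theorem~\ref{thm:fibres-vs-lines}). A point of $\ell_A$ projects to $x$ exactly when $\ell_A\cap\ell_x\neq\emptyset$, i.e.\ when $L_A\cap L_x\neq\{0\}$. A vector $(z,Az)$ lies in $L_x$ iff $Az=xz$, so the intersection is nontrivial iff $\ker(A-x)\neq 0$, i.e.\ $\det(A-x)=0$. This gives \eqref{eq:SigmaA-def} directly. The only care needed is that $\pi|_{\Qtwotwo}$ has fibres exactly the $\ell_x$, which is the earlier proposition on twistor fibres.

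For \eqref{eq:SigmaA-conjugacy}, I would write $x=Uy$ with $y\in SU(2)$. Then $\det(A-x)=\det U\det(e^{i\theta}-y)=\det(e^{i\theta}I-y)$. For $y\in SU(2)$ the characteristic polynomial is $\lambda^2-\operatorname{tr}(y)\lambda+1$, so the condition becomes $e^{2i\theta}-\operatorname{tr}(y)e^{i\theta}+1=0$. Dividing by $e^{i\theta}$ gives $\operatorname{tr}(y)=e^{i\theta}+e^{-i\theta}=2\cos\theta$. Hence $\Sigma_A=U C_\theta$. The decomposition $A=e^{i\theta}U$ is ambiguous up to $(\theta,U)\mapsto(\theta+\pi,-U)$, but this ambiguity is harmless: $-C_{\theta}=C_{\theta+\pi}$, since the trace changes sign.

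For the geometric conclusions, identify $SU(2)$ with the unit quaternions, i.e.\ with $S^3\subset\R^4$ via $y=\bigl(\begin{smallmatrix}\alpha&\beta\\-\overline\beta&\overline\alpha\end{smallmatrix}\bigr)\mapsto(\alpha,\beta)$. Then $\operatorname{tr}(y)=2\Re\alpha$, so $C_\theta$ is the intersection of $S^3$ with the affine hyperplane $\Re\alpha=\cos\theta$.
\begin{itemize}
\item If $\cos\theta=\pm1$, the intersection is the single point $\pm I$. Then $\Sigma_A=\{\pm U\}=\{A\}$, which is case (i) after absorbing the sign into $U$.
\item Otherwise $|\cos\theta|<1$, and $C_\theta$ is a round $2$-sphere of radius $|\sin\theta|$.
\end{itemize}
Left multiplication by $U$ is an isometry of the round $S^3$, so $U C_\theta$ is again a round $2$-sphere, which gives (ii).

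The main obstacle is conceptual rather than computational. One must check that the identification of the base $S^3\subset\HP^1$ with $SU(2)$ through $x\mapsto\ell_x$ is compatible with the round conformal structure, so that "round $2$-sphere" means the same thing on both sides. I would verify this by computing $\pi$ on $[z:xz]$ with $z=(1,0)$. The result should be $[1+j\cdot(\text{stuff}):\ldots]$, giving an explicit map $x\mapsto[1:q_x]$ with $q_x$ a unit quaternion depending linearly (up to conjugation) on the entries of $x$. Such a map is an isometry between $SU(2)$ and the unit quaternions, hence conformal.
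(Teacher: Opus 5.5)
Your proposal is correct and follows the paper's proof: you intersect the graphs $L_A$ and $L_x$ to get $\det(A-x)=0$, substitute $x=Uy$, and use the characteristic polynomial $\lambda^2-\mathrm{tr}(y)\lambda+1$ of $y\in SU(2)$ to reach $\mathrm{tr}(y)=2\cos\theta$. Your additional checks are refinements the paper leaves implicit: the $(\theta,U)\mapsto(\theta+\pi,-U)$ ambiguity, which shows that in case (i) one really has $\Sigma_A=\{A\}$ (equal to $\{-U\}$ when $\theta=\pi$), and the verification that $x\mapsto\pi[1:0:\alpha:-\overline\beta]=[1:\alpha-j\overline\beta]$ identifies the base isometrically with the unit quaternions, so that ``round'' has the same meaning on both sides.
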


\begin{proof}
A point \(x\in SU(2)\cong S^3\) belongs to \(\pi(\ell_A)\) if and only if the fibre
\(\ell_x\) meets the line \(\ell_A\). By Theorem~\ref{thm:all-lines-Q22}, these two
lines are the projectivizations of the graphs
\[
L_A=\{(z,Az)\mid z\in\C^2\},
\qquad
L_x=\{(z,xz)\mid z\in\C^2\}.
\]
Hence
\[
\ell_A\cap \ell_x\neq\varnothing
\]
if and only if there exists \(z\neq 0\) such that
\[
(z,Az)=(z,xz),
\]
that is, such that
\[
(A-x)z=0.
\]
Equivalently, \(\ell_A\) and \(\ell_x\) intersect if and only if \(A-x\) is singular,
namely
\[
\det(A-x)=0.
\]
This proves \eqref{eq:SigmaA-def}.

Now write \(A=e^{i\theta}U\).  Then
\[
\det(A-x)=0
\quad\Longleftrightarrow\quad
\det(e^{i\theta}I-U^{-1}x)=0.
\]
Set \(y=U^{-1}x\in SU(2)\).  Since \(\det y=1\), we compute
\[
0=\det(e^{i\theta}I-y)
=
e^{2i\theta}-e^{i\theta}tr(y)+1.
\]
Multiplying by \(e^{-i\theta}\) gives
\[
0=2\cos\theta-tr(y).
\]
Hence \(\det(A-x)=0\) if and only if \(tr(U^{-1}x)=2\cos\theta\), which is exactly
\eqref{eq:SigmaA-conjugacy}.

If \(A\in SU(2)\), then \(\theta=0\) modulo \(\pi\), and by
Theorem~\ref{thm:fibres-vs-lines} the line \(\ell_A\) is a fibre; therefore its image is a
single point.  If \(A\notin SU(2)\), then \(0<\theta<\pi\) after choosing a representative
angle, and the condition \(tr(y)=2\cos\theta\) cuts out a standard round \(2\)-sphere
inside the unit \(3\)-sphere \(SU(2)\).
\end{proof}

The theorem shows that every non-fibre line in \(\Qtwotwo\) projects to a round
\(2\)-sphere in \(S^3\).  Moreover, the involution \(j\) exchanges the two oriented lifts of
the same sphere.

\begin{corollary}
Let \(A=e^{i\theta}U\in U(2)\), with \(U\in SU(2)\).  Then
\[
\pi(\ell_A)=\pi(j(\ell_A))=\pi(\ell_{e^{-i\theta}U}).
\]
Thus the two \(j\)-conjugate lines \(\ell_{e^{i\theta}U}\) and \(\ell_{e^{-i\theta}U}\)
project to the same underlying round \(2\)-sphere in \(S^3\).  They may be regarded as
the two opposite twistor lifts of that sphere.
\end{corollary}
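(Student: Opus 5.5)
The plan is to combine Corollary~\ref{cor:two-line-families} with Theorem~\ref{thm:line-to-sphere}. Corollary~\ref{cor:two-line-families} already identifies $j(\ell_A)$ with $\ell_{e^{-i\theta}U}$. So the identity $\pi(\ell_A)=\pi(j(\ell_A))$ reduces to $\Sigma_{e^{i\theta}U}=\Sigma_{e^{-i\theta}U}$, and the remaining task is to compare these two sets directly via \eqref{eq:SigmaA-conjugacy}.

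\textbf{Main step.} By Theorem~\ref{thm:line-to-sphere}, $\Sigma_{e^{i\theta}U}=U\,C_\theta$ and $\Sigma_{e^{-i\theta}U}=U\,C_{-\theta}$, where $C_{\pm\theta}$ is the level set $\{y\in SU(2)\mid tr(y)=2\cos(\pm\theta)\}$. Since cosine is even, $C_\theta=C_{-\theta}$, and hence the two spheres coincide.

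One bookkeeping point needs attention. The decomposition $A=e^{i\theta}U$ is only defined up to $(\theta,U)\mapsto(\theta+\pi,-U)$, so I must check that the description $U\,C_\theta$ does not depend on this choice. This holds because $-U\,C_{\theta+\pi}=U\,C_\theta$: indeed $y\mapsto -y$ sends the level $2\cos\theta$ to the level $-2\cos\theta=2\cos(\theta+\pi)$. In any case the argument above never uses a normalized angle; it applies verbatim to whichever representative $(\theta,U)$ produced $j(\ell_A)=\ell_{e^{-i\theta}U}$.

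\textbf{Alternative argument.} A cleaner route avoids the formulas altogether. The twistor projection $\pi$ is constant on twistor fibres, and every fibre is $j$-invariant. Hence $\pi\circ j=\pi$ on $\CP^3$, and therefore $\pi(j(\ell_A))=\pi(\ell_A)$ immediately. I would present this as the main argument and keep the trace computation as an explicit check.

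\textbf{Interpretive statement.} The claim that the two lines are the ``two opposite lifts'' is descriptive. I would justify it by noting that for $e^{i\theta}\neq\pm1$ the lines are distinct (Corollary~\ref{cor:two-line-families}) while lying over the same sphere. The orientation reading is deferred to the later tangency discussion, so no real obstacle arises here; the only delicate point is the angle-representative ambiguity handled above.
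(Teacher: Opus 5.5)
Your proposal is correct. Your ``main step'' is exactly the paper's proof: it cites Corollary~\ref{cor:two-line-families} for $j(\ell_{e^{i\theta}U})=\ell_{e^{-i\theta}U}$, then observes that $\Sigma_{e^{i\theta}U}=U\,C_\theta=U\,C_{-\theta}=\Sigma_{e^{-i\theta}U}$ because cosine is even.

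The argument you say you would actually lead with is different. You use $\pi\circ j=\pi$ on $\CP^3$, which holds because every twistor fibre is $j$-invariant. Hence $\pi(j(X))=\pi(X)$ for any subset $X$, and Corollary~\ref{cor:two-line-families} is needed only to identify $j(\ell_A)$ with $\ell_{e^{-i\theta}U}$.

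This route is more conceptual and more general. It applies to any curve or surface in $\Qtwotwo$ and to its $j$-image, and it uses neither the $U(2)$-parametrization nor Theorem~\ref{thm:line-to-sphere} for the equality of images. The paper's computation, in exchange, exhibits the common image explicitly as the geodesic sphere $U\,C_\theta$ of radius $\theta$ centred at $U$. That is the form used in the subsequent tangency theorem. In both routes, the claim that this image is a round $2$-sphere (for $e^{i\theta}\neq\pm1$) still comes from Theorem~\ref{thm:line-to-sphere}(ii).

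Your check that $-U\,C_{\theta+\pi}=U\,C_\theta$ is right but unnecessary. The proof of Theorem~\ref{thm:line-to-sphere} never normalizes the angle, so its formula holds for any representative $(\theta,U)$, as you yourself note.
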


\begin{proof}
By Corollary~\ref{cor:two-line-families},
\(j(\ell_{e^{i\theta}U})=\ell_{e^{-i\theta}U}\).
On the other hand,
\[
\Sigma_{e^{i\theta}U}
=
U\,C_\theta
=
U\,C_{-\theta}
=
\Sigma_{e^{-i\theta}U},
\]
because \(\cos(-\theta)=\cos\theta\).
\end{proof}

We next compare intersections of lines upstairs with intersections of the associated
spheres downstairs.  For
\[
A=e^{i\theta}U,
\qquad
B=e^{i\phi}V,
\qquad
U,V\in SU(2),
\]
let \(\Sigma_A,\Sigma_B\subset S^3\) be the corresponding round \(2\)-spheres.  Since
\(SU(2)\) is the unit \(3\)-sphere with its round metric, the subsets
\[
C_\theta=\{y\in SU(2)\mid tr(y)=2\cos\theta\}
\]
are precisely the geodesic \(2\)-spheres of radius \(\theta\) centered at \(\pi(\ell_U)\).
Thus \(\Sigma_A=U C_\theta\) is the geodesic \(2\)-sphere of radius \(\theta\) centered at
\(U\).

\begin{theorem}\label{thm:incidence-sphere-geometry}
Let
\[
A=e^{i\theta}U,
\qquad
B=e^{i\phi}V,
\qquad
U,V\in SU(2),
\]
with \(0\leq \theta,\phi\leq \pi\).  Then the following hold.
\begin{enumerate}
\item[\rm (i)] A twistor fibre \(\ell_x\) intersects \(\ell_A\) if and only if
\(x\in \Sigma_A\).

\item[\rm (ii)] The lines \(\ell_A\) and \(\ell_B\) intersect if and only if
\begin{equation}\label{eq:line-intersection-trace}
tr(V^{-1}U)=2\cos(\theta-\phi).
\end{equation}
Equivalently, the associated oriented spheres \(\Sigma_A\) and \(\Sigma_B\) are tangent
with compatible orientation.

\item[\rm (iii)] The lines \(\ell_A\) and \(j(\ell_B)=\ell_{e^{-i\phi}V}\) intersect if and
only if
\[
tr(V^{-1}U)=2\cos(\theta+\phi).
\]
Equivalently, the underlying spheres \(\Sigma_A\) and \(\Sigma_B\) are tangent with
opposite orientation.
\end{enumerate}
\end{theorem}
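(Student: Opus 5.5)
Part (i) is a restatement of Theorem~\ref{thm:line-to-sphere}: the fibre $\ell_x$ is the graph $L_x$, and $\ell_x\cap\ell_A\neq\varnothing$ exactly when $\det(A-x)=0$, i.e.\ when $x\in\Sigma_A$. For (ii) the same graph argument applies. Two graphs $L_A$ and $L_B$ meet nontrivially if and only if $(A-B)z=0$ for some $z\neq0$, that is, iff $\det(A-B)=0$. Writing $A=e^{i\theta}U$, $B=e^{i\phi}V$, this is equivalent to $\det\bigl(e^{i(\theta-\phi)}I-V^{-1}U\bigr)=0$, after multiplying by $e^{-i\phi}V^{-1}$, which has determinant $e^{-2i\phi}\neq0$. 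Since $V^{-1}U\in SU(2)$, its characteristic polynomial is $\lambda^2-tr(V^{-1}U)\lambda+1$. The computation used in the proof of Theorem~\ref{thm:line-to-sphere} then turns the vanishing at $\lambda=e^{i(\theta-\phi)}$ into $tr(V^{-1}U)=2\cos(\theta-\phi)$. Part (iii) follows by replacing $\phi$ with $-\phi$, using Corollary~\ref{cor:two-line-families}, which gives $j(\ell_B)=\ell_{e^{-i\phi}V}$.

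Next I translate these trace identities into sphere geometry. On $SU(2)\cong S^3$ with the round metric, $tr(V^{-1}U)=2\cos d(U,V)$, where $d$ is the geodesic distance; this holds because $\tfrac12 tr(y)$ is the first quaternionic coordinate of $y$, and it is bi-invariant. By the discussion preceding the theorem, $\Sigma_A$ is the geodesic sphere of radius $\theta$ centred at $U$, and $\Sigma_B$ has radius $\phi$ centred at $V$. I orient $\Sigma_A$ by the sign of the phase: $\ell_{e^{i\theta}U}$ and $\ell_{e^{-i\theta}U}$ give the two orientations of the same sphere, say via the outward or inward normal relative to the centre $U$. Equivalently, $\Sigma_{e^{i\theta}U}$ can be viewed as the sphere of radius $\theta$ about $U$ or of radius $\pi-\theta$ about $-U$. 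Condition (ii) then reads $d(U,V)=|\theta-\phi|$, because $\cos$ is injective on $[0,\pi]$ and $|\theta-\phi|\in[0,\pi]$. This is exactly internal tangency of two geodesic spheres in $S^3$, or coincidence when $U=V$ and $\theta=\phi$. Internal tangency is the configuration in which the oriented normals agree at the contact point. Likewise, (iii) reads $d(U,V)=\min(\theta+\phi,\,2\pi-\theta-\phi)$. This is external tangency, either of the spheres about $U$ and $V$, or of the spheres about $U$ and $-V$ when $\theta+\phi>\pi$; in both cases the oriented normals are opposite.

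The main obstacle is not the linear algebra but fixing an orientation convention that makes the ``compatible/opposite orientation'' clauses precise and consistent. I would define the orientation of $\Sigma_{e^{i\theta}U}$, for $\theta\in(0,\pi)$, by the unit normal pointing away from $U$. I would then check that the choices $\theta\mapsto-\theta$ and $(U,\theta)\mapsto(-U,\pi-\theta)$ reverse and preserve this orientation as required, noting $e^{i\theta}U=e^{i(\theta-\pi)}(-U)$. I would also verify the elementary spherical-geometry fact that two geodesic spheres in $S^3$ are tangent exactly when the distance between their centres is $|\theta-\phi|$ or $\theta+\phi$ (mod the antipodal ambiguity). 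Finally, the degenerate cases $\theta$ or $\phi\in\{0,\pi\}$ should be treated separately; there (ii) and (iii) collapse to (i), a point lying on a sphere.
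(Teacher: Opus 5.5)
Your proposal is correct and follows the paper's proof: you reduce intersection of the graphs $L_A,L_B$ to $\det(A-B)=0$, use the characteristic polynomial $\lambda^2-tr(y)\lambda+1$ of $y=V^{-1}U\in SU(2)$ to get the trace condition, and obtain (iii) by $\phi\mapsto-\phi$; your identity $tr(V^{-1}U)=2\cos d(U,V)$ and the outward-normal orientation convention go further than the paper, which only asserts the tangency clauses without checking them. There are two harmless slips. First, left multiplication by $e^{-i\phi}V^{-1}$ actually gives $e^{i(\theta-\phi)}V^{-1}U-I$; this is equivalent to your matrix because the spectrum of an $SU(2)$ matrix is closed under complex conjugation. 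Second, for $\theta+\phi>\pi$ the relation $d(U,V)=2\pi-\theta-\phi$ is external tangency of the spheres of radii $\pi-\theta,\pi-\phi$ about $-U,-V$ (equivalently, internal tangency of the spheres about $U$ and $-V$), not external tangency about $U$ and $-V$; your conclusion that the oriented normals are opposite is unaffected.
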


\begin{proof}
Statement \((i)\) is just Theorem~\ref{thm:line-to-sphere}: by definition,
\[
x\in \Sigma_A
\iff
\det(A-x)=0
\iff
\ell_A\cap \ell_x\neq \varnothing.
\]

For \((ii)\), Theorem~\ref{thm:all-lines-Q22} gives
\[
\ell_A\cap \ell_B\neq \varnothing
\iff
\det(A-B)=0.
\]
Substituting \(A=e^{i\theta}U\) and \(B=e^{i\phi}V\), this becomes
\[
0=\det(e^{i\theta}U-e^{i\phi}V)
=
\det\bigl(e^{i(\theta-\phi)}V^{-1}U-I\bigr).
\]
Since \(V^{-1}U\in SU(2)\), the determinant vanishes if and only if \(1\) is an
eigenvalue of \(e^{i(\theta-\phi)}V^{-1}U\), equivalently if and only if
\(V^{-1}U\) has eigenvalues \(e^{\pm i(\theta-\phi)}\).  This is equivalent to
\eqref{eq:line-intersection-trace}.  Since \(\Sigma_A\) and \(\Sigma_B\) are geodesic
\(2\)-spheres of radii \(\theta\) and \(\phi\), centered at \(\pi(\ell_U)\) and \(\pi(\ell_V)\), the resulting condition
\eqref{eq:line-intersection-trace} is exactly the tangency condition with compatible
orientation.

For \((iii)\), replace \(B\) by \(e^{-i\phi}V\).  Then
\[
\ell_A\cap j(\ell_B)\neq \varnothing
\iff
\det(A-e^{-i\phi}V)=0,
\]
and the same computation yields
\(tr(V^{-1}U)=2\cos(\theta+\phi)\).
This is the tangency condition with the opposite choice of orientation on the second
sphere.
\end{proof}

\begin{example}
The correspondence described above is already visible in a few very simple cases.

\begin{enumerate}
\item[\rm(i)] \textbf{A twistor fibre.}
Take \(A=I\in SU(2)\). Then \(\ell_A\) is a twistor fibre, and
\[
\Sigma_A
=
\{x\in SU(2)\mid \det(I-x)=0\}
=
\{I\}.
\]
Indeed, if \(x\in SU(2)\) and \(\det(I-x)=0\), then \(1\) is an eigenvalue of \(x\);
since \(\det x=1\), both eigenvalues are equal to \(1\), hence \(x=I\).

\item[\rm(ii)] \textbf{A standard round \(2\)-sphere.}
Take
\[
A=e^{i\theta}I,
\qquad 0<\theta<\pi.
\]
Then
\[
\Sigma_A
=
\{x\in SU(2)\mid \det(e^{i\theta}I-x)=0\}
=
\{x\in SU(2)\mid tr(x)=2\cos\theta\}
=
C_\theta.
\]
Thus \(\ell_A\) projects to the geodesic \(2\)-sphere of radius \(\theta\) centered at
the identity. In particular, for \(\theta=\pi/2\),
\[
\Sigma_{iI}
=
\{x\in SU(2)\mid tr(x)=0\},
\]
which is the equatorial round \(2\)-sphere in \(S^3\).

\item[\rm(iii)] \textbf{Compatible tangency.}
Let
\[
A=e^{i\pi/3}I,
\qquad
B=e^{i\pi/6}
\begin{pmatrix}
e^{i\pi/6}&0\\
0&e^{-i\pi/6}
\end{pmatrix}.
\]
Then \(A=e^{i\theta}U\) and \(B=e^{i\phi}V\) with
\[
\theta=\frac{\pi}{3},\qquad \phi=\frac{\pi}{6},\qquad U=I,
\qquad
V=
\begin{pmatrix}
e^{i\pi/6}&0\\
0&e^{-i\pi/6}
\end{pmatrix}\in SU(2).
\]
Moreover,
\[
tr(V^{-1}U)=tr(V^{-1})=2\cos\frac{\pi}{6}
=
2\cos\!\left(\frac{\pi}{3}-\frac{\pi}{6}\right).
\]
Hence, by Theorem~\ref{thm:incidence-sphere-geometry}\rm(ii),
\[
\ell_A\cap \ell_B\neq\varnothing,
\]
so the corresponding spheres \(\Sigma_A\) and \(\Sigma_B\) are tangent with
compatible orientation.

\item[\rm(iv)] \textbf{Opposite tangency.}
Let
\[
A=e^{i\pi/4}I,
\qquad
B=e^{i\pi/4}
\begin{pmatrix}
i&0\\
0&-i
\end{pmatrix}.
\]
Then \(A=e^{i\theta}U\) and \(B=e^{i\phi}V\) with
\[
\theta=\phi=\frac{\pi}{4},\qquad U=I,
\qquad
V=
\begin{pmatrix}
i&0\\
0&-i
\end{pmatrix}\in SU(2).
\]
Since
\[
tr(V^{-1}U)=tr(V^{-1})=0
=
2\cos\frac{\pi}{2}
=
2\cos\!\left(\frac{\pi}{4}+\frac{\pi}{4}\right),
\]
Theorem~\ref{thm:incidence-sphere-geometry}\rm(iii) gives
\[
\ell_A\cap j(\ell_B)\neq\varnothing.
\]
Therefore \(\Sigma_A\) and \(\Sigma_B\) are tangent with opposite orientation.
\end{enumerate}
\end{example}

\subsection{No other holomorphic curves in \texorpdfstring{$\Qtwotwo$}{Q22}}

Every connected holomorphic curve in $\Qtwotwo$ is contained in one of the
projective lines classified above; there are no curves of higher degree.

\begin{theorem}
Let $C\subset\Qtwotwo$ be a connected complex curve. Then $C$ is contained in a
projective line $\ell_A\subset\Qtwotwo$, with $A\in U(2)$.
\end{theorem}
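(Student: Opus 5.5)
Let $C\subset\Qtwotwo$ be a connected complex curve. The plan is to show first that every tangent line of $C$ is a line contained in $\Qtwotwo$, and then to show that this tangent line is constant along $C$.

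\textbf{Step 1: tangent lines of $C$ are null.} Work locally near a smooth point of $C$. Choose a holomorphic lift $\gamma:\Delta\to\C^4\setminus\{0\}$ of a parametrization of $C$. Since $C\subset\Qtwotwo$, we have $h(\gamma,\gamma)\equiv 0$. Apply $\partial_t$ and $\partial_{\bar t}$ and use that $\gamma$ is holomorphic:
\[
h(\gamma,\gamma')=0,\qquad h(\gamma',\gamma')=0 .
\]
The second identity comes from $\partial_t\partial_{\bar t}$. Together with $h(\gamma,\gamma)=0$, this shows that $h$ vanishes identically on $L(t):=\Span_\C\{\gamma(t),\gamma'(t)\}$, since $h$ is sesquilinear and the mixed term vanishes as well. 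At smooth points $L(t)$ is $2$-dimensional, so $\PP(L(t))$ is a projective line contained in $\Qtwotwo$. This is the projective tangent line to $C$. By Theorem~\ref{thm:all-lines-Q22}, $L(t)=L_{A(t)}$ for a unique $A(t)\in U(2)$.

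\textbf{Step 2: the tangent line is constant.} The map $t\mapsto L(t)$ into the Grassmannian $G(2,4)$ is holomorphic, since it is the Gauss map of a holomorphic curve. In the chart of planes that are graphs over $\C^2_+$ (all null planes lie in this chart, by the injectivity argument in Theorem~\ref{thm:all-lines-Q22}), this map reads $t\mapsto A(t)$ with $A(t)$ holomorphic. However, $A(t)\in U(2)$. A holomorphic map into $U(2)$ is constant, because $U(2)$ is a totally real submanifold of $M_2(\C)$. Concretely, $A^*A=I$ gives $A^{-1}=A^*$; the left side is holomorphic and the right side antiholomorphic, so $A'=0$. Hence $L(t)\equiv L_A$ on the neighbourhood, and $\gamma(t)\in L_A$ for all $t$ there, so this local piece of $C$ lies in $\ell_A$.

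\textbf{Step 3: globalization.} The set of smooth points of $C$ near which $C$ lies in $\ell_A$ is open. It is also closed in the smooth locus, since $\ell_A$ is closed and the identity theorem applies: an analytic curve agreeing with a line on an open set lies in it on the whole connected component. The smooth locus of an irreducible curve is connected, so $C\subset\ell_A$ when $C$ is irreducible. If $C$ is reducible, each irreducible component lies in some line $\ell_{A_i}$, and each component is then an open subset of its line or the whole line. Two distinct lines meet in at most one point, so a connected reducible $C$ could be a chain of segments of distinct lines. In that case I expect the statement to require irreducibility, which I would add as a hypothesis or take as the intended meaning of ``curve''.

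\textbf{Main obstacle.} The main step is Step 2, namely checking that the Gauss map is holomorphic in the graph chart. Writing $\gamma=(z(t),w(t))$, Theorem~\ref{thm:all-lines-Q22} gives that $p_+$ restricted to $L(t)$ is an isomorphism. Then
\[
A(t)=\bigl(w\;\,w'\bigr)\bigl(z\;\,z'\bigr)^{-1},
\]
which is holomorphic because $\bigl(z\;\,z'\bigr)$ is invertible.

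Singular points need a separate argument. There I would use the local normalization of $C$ (a Puiseux parametrization) in place of $\gamma$: on the punctured disc the tangent plane extends holomorphically, so the same argument applies.
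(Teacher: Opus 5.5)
Your proof is correct, but Step~2 takes a different route from the paper's. The paper's Step~1 is the same as yours. It then argues intrinsically: because $h$ has neutral signature $(2,2)$, a totally isotropic $2$-plane is maximal isotropic, so $L(\zeta)^{\perp_h}=L(\zeta)$. Differentiating $h(F,F')=0$ and $h(F',F')=0$ once more in $\zeta$ gives $F''\perp_h F$ and $F''\perp_h F'$, hence $F''\in L(\zeta)$, and therefore the plane $\Span_\C\{F,F'\}$ is stationary.

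You instead pass the isotropic tangent plane to Theorem~\ref{thm:all-lines-Q22}. You write it as the graph of $A(t)=(w\;w')(z\;z')^{-1}$ and conclude from $A^{-1}=A^*$, which is both holomorphic and antiholomorphic, that $A$ is constant.

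The paper's version is coordinate-free; it is the usual osculating-plane argument and uses only the signature. Yours reuses the line classification and makes the rigidity mechanism explicit: the lines of $\Qtwotwo$ form a totally real copy of $U(2)$ in a chart of the Grassmannian, so they admit no non-trivial holomorphic families.

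Your treatment of singular points by normalization is more careful than the paper's. So is your caveat about reducible curves, which is justified. Take a twistor fibre $\ell_x$ together with a transverse line $\ell_A$ with $x\in\Sigma_A$, as in Theorem~\ref{thm:incidence-sphere-geometry}(i). Their union is a connected complex curve in $\Qtwotwo$ that is not contained in a single line. So the paper's final step ``by connectedness'' tacitly requires $C$ to be irreducible, or a connected immersed Riemann surface, exactly as you propose.
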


\begin{proof}
Choose a holomorphic lift $F:\Delta\to\C^4\setminus\{0\}$ of $C$ near any point.
The condition $[F(\zeta)]\in\Qtwotwo$ is $h(F,F)=0$. Differentiating twice and
using holomorphicity, one obtains $h(F',F)=0$ and $h(F',F')=0$, so the plane
$L(\zeta):=\Span_\C\{F,F'\}$ is totally isotropic whenever $F\wedge F'\neq 0$.
Since $h$ has signature $(2,2)$, a $2$-dimensional totally isotropic subspace
satisfies $L(\zeta)^\perp=L(\zeta)$; differentiating once more gives
$F''\in L(\zeta)^\perp=L(\zeta)$, so $L(\zeta)$ is locally constant. Hence $C$
is locally contained in the fixed projective line $\PP(L)$, and by connectedness
the whole curve lies in a single line $\ell\subset\Qtwotwo$.
\end{proof}

\begin{corollary}
$\Qtwotwo$ contains no irreducible complex projective curves of degree $>1$. The
connected holomorphic curves are exactly the open subsets of the lines $\ell_A$.
\end{corollary}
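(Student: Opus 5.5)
The corollary follows directly from the preceding theorem, which places every connected complex curve in $\Qtwotwo$ inside a single line $\ell_A$, $A\in U(2)$. I would use that theorem for both assertions.

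\emph{No curves of degree $>1$.} Let $C\subset\Qtwotwo$ be an irreducible complex projective curve. It is connected, so by the theorem $C\subset\ell_A$ for some $A\in U(2)$. An irreducible closed one-dimensional subvariety of the irreducible curve $\ell_A\cong\CP^1$ must equal $\ell_A$. Hence $C=\ell_A$, which has degree $1$, and no curve of higher degree exists. In particular this rules out conics, twisted cubics, and nonlinear plane sections.

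\emph{Characterization of connected holomorphic curves.} Let $C$ be a connected holomorphic curve. By the theorem, $C\subset\ell_A$. A one-dimensional complex submanifold of the one-dimensional manifold $\ell_A$ is open in it, since the inclusion is a holomorphic injective immersion between manifolds of equal dimension, and such a map is open. Conversely, every connected open subset of any $\ell_A$ is a holomorphic curve contained in $\Qtwotwo$, because Theorem~\ref{thm:all-lines-Q22} gives $\ell_A\subset\Qtwotwo$.

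\emph{Main obstacle.} If $C$ is allowed to have singular points, one must interpret the statement on the regular locus. Here the argument is: the regular part is dense and contained in $\ell_A$, so $C\subset\ell_A$ since $\ell_A$ is closed. The characterization "open subsets" should then be read for smooth (immersed) curves. Apart from this point, the proof is immediate.
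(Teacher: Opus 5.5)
Your proposal is correct and matches the paper. There the corollary is stated as an immediate consequence of the preceding theorem (every connected complex curve lies in some $\ell_A$) with no further argument, and your two deductions are exactly what is implicit: an irreducible curve inside $\ell_A\cong\CP^1$ must equal it, and a one-dimensional complex submanifold of $\ell_A$ is open in it. Your caveat about singular curves is unnecessary, because any pure one-dimensional analytic subset of the one-dimensional manifold $\ell_A$ is automatically open in $\ell_A$, and hence smooth.
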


This is consistent with Bryant's general theory \cite{Bryant1982},  in a Lorentzian \(CR\) \(5\)-manifold, i.e.\ a \(CR\) manifold whose Levi form has
signature \((n-1,1)\) with \(n=\mathrm{rk}_{\C}T^{0,1}M\), holomorphic curves depend on at most \(4\) real parameters
\cite[Thm.~3.11]{Bryant1982}, and the flat model $\Qtwotwo$ is precisely the one
that attains this maximum \cite[Thm.~4.1]{Bryant1982}.
\begin{remark}
The preceding theorem fits naturally into LeBrun's classification of foliated
Levi--nondegenerate \(CR\) manifolds \cite{LeBrun1985}. Indeed, LeBrun shows
that if a \(CR\) \((4m+1)\)-manifold with nondegenerate Levi form is foliated by
compact complex \(m\)-folds, when \(m=1\), then the leaves are
\(\CP^1\)'s, and the manifold arises from a twistor construction over a
\(3\)-manifold. More precisely, such a foliated \(CR\) manifold is determined by
its first and second fundamental forms, and conversely these data determine the
foliated \(CR\) structure.

Accordingly, \(\Qtwotwo\) should be viewed as the flat projective model for this
twistor class in dimension five. The fact established above, namely that every
connected complex curve in \(\Qtwotwo\) is contained in a projective line,
expresses in the flat model the general principle that the relevant compact
complex curves are precisely the twistor lines.
\end{remark}

\section{Hyperplane sections of \texorpdfstring{$\Qtwotwo$}{Q22}}\label{sec:hyperplane-sections}
We now study $\Qtwotwo$ by intersecting it with algebraic hypersurfaces in $\CP^3$.
From the twistor point of view this is natural: the behaviour of an algebraic surface
in $\CP^3$ with respect to the twistor fibration encodes geometric information on $S^4$,
as illustrated by the theory of real quadrics and their discriminant loci developed in
\cite{SalamonViaclovsky2009}. Our aim is to adapt this philosophy to the flat twistor
$CR$ geometry of $\Qtwotwo$.

The simplest case is hyperplane sections, which already exhibit the main phenomena, 
smooth spherical sections recovering the standard spherical $CR$ $3$-sphere, tangent
degenerations leading to Levi-flat behaviour, and the distinction between
$j$-compatible and $j$-incompatible slices. Quadric sections, treated in the next
section, form the natural continuation, $j$-invariant quadrics in $\CP^3$ have a
twistor meaning of their own, and their intersections with $\Qtwotwo$ produce branch
loci and discriminant phenomena readable on $S^3$. Together the two sections form
a single projective-twistor investigation, first at the linear and then at the quadratic
level.

\subsection{A first direct classification}

Let $\Pi=\PP(W)\subset\CP^3$ be a complex hyperplane, with $W\subset\C^4$ a
$3$-dimensional subspace. Then, from Formula \eqref{eq:Q22-homogeneous}
\[
\Pi\cap\Qtwotwo=\{[z]\in\PP(W)\mid h|_W(z,z)=0\},
\]
so the geometry of the section is governed entirely by the restriction of $h$
to $W$.

\begin{theorem}\label{thm:first-hyperplane-classification}
Let $\Pi=\PP(W)\subset\CP^3$ be a complex hyperplane. Since $h$ is nondegenerate
on $\C^4$ and $\dim_\C W=3$, the restriction $h|_W$ is either nondegenerate or has
a one-dimensional radical
\[
\mathrm{rad}(h|_W):=\{v\in W\mid h(v,w)=0\ \forall\,w\in W\}=W\cap W^{\perp_h}.
\]
Exactly two cases occur.

\begin{enumerate}
\item[\rm(i)] \textbf{Smooth case.} If $h|_W$ is nondegenerate, it has signature
$(2,1)$ or $(1,2)$, and in suitable coordinates on $\Pi\cong\CP^2$,
\begin{equation}\label{eq:smooth-section-model}
\Pi\cap\Qtwotwo
=\Bigl\{[\zeta_0:\zeta_1:\zeta_2]\in\CP^2\;\Big|\;
|\zeta_0|^2+|\zeta_1|^2=|\zeta_2|^2\Bigr\}.
\end{equation}
This is a smooth spherical $CR$ $3$-sphere, the standard projective model of
$\{|z_1|^2+|z_2|^2=1\}\subset\C^2$.

\item[\rm(ii)] \textbf{Tangent case.} If $h|_W$ is degenerate, the induced form on
$W/\mathrm{rad}(h|_W)$ is nondegenerate of signature $(1,1)$. In coordinates adapted
to the radical,
\begin{equation}\label{eq:tangent-section-model}
\Pi\cap\Qtwotwo
=\Bigl\{[\zeta_0:\zeta_1:\zeta_2]\in\CP^2\;\Big|\;
|\zeta_1|^2=|\zeta_2|^2\Bigr\},
\end{equation}
which is singular at $[1:0:0]$. On the affine charts $\zeta_1=1$ and $\zeta_2=1$ the smooth locus is
$\C\times S^1$, and the induced $CR$ structure is Levi-flat.
\end{enumerate}
\end{theorem}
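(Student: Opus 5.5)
The plan is to reduce everything to linear algebra of the restricted Hermitian form $h|_W$ and then apply Sylvester's law of inertia inside $W$. First I will show the radical is at most one-dimensional. Since $h$ is nondegenerate, $\dim_\C W^{\perp_h}=1$, so $\mathrm{rad}(h|_W)=W\cap W^{\perp_h}$ has dimension $0$ or $1$. It has dimension $1$ exactly when $W^{\perp_h}\subset W$, i.e.\ when the $h$-normal vector $n$ of $W$ is $h$-null. This is the criterion for $\Pi$ being tangent to $\Qtwotwo$ at $[n]$, and it explains the name ``tangent case''.

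\emph{Smooth case.} Here $\C^4=W\oplus W^{\perp_h}$ is an $h$-orthogonal splitting, and $h|_{W^{\perp_h}}$ is a nonzero real multiple of $|\cdot|^2$. Sylvester's law then forces $h|_W$ to have signature $(2,1)$ or $(1,2)$. Choosing an $h|_W$-orthonormal basis, and multiplying the form by $-1$ in the $(1,2)$ case (which does not change the null cone), gives the normal form \eqref{eq:smooth-section-model}. In the chart $\zeta_2=1$ this is $\{|\zeta_0|^2+|\zeta_1|^2=1\}$, which lies entirely in that chart because $\zeta_2=0$ would force $\zeta=0$. So the section is a smooth sphere, and it carries the spherical $CR$ structure of \S\ref{3sph}.

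\emph{Tangent case.} I will pick a spanning vector $r$ of the radical and complete it to a basis $(r,e_1,e_2)$ of $W$ with $\mathrm{span}(e_1,e_2)$ complementary to $\C r$. The form descends to $W/\C r$ and is nondegenerate there. Its signature is forced to be $(1,1)$ because $\C^4$ has signature $(2,2)$. Concretely, a hyperbolic pair $r,s$ with $s\notin W$ splits off, and $W/\C r$ is isometric to $\{r,s\}^{\perp_h}$, which has signature $(1,1)$. Diagonalizing on $e_1,e_2$ gives $h(\zeta_0r+\zeta_1e_1+\zeta_2e_2)=|\zeta_1|^2-|\zeta_2|^2$, which is \eqref{eq:tangent-section-model}. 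The defining function $|\zeta_1|^2-|\zeta_2|^2$ has vanishing differential in the $(\zeta_1,\zeta_2)$-directions only where $\zeta_1=\zeta_2=0$, i.e.\ at $[1:0:0]$, so the singular locus is that single point.

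The remaining step is the $CR$ structure on the smooth locus. In the chart $\zeta_2=1$ the equation becomes $|\zeta_1|=1$ with $\zeta_0$ free, so the set is $\C\times S^1$; the chart $\zeta_1=1$ is symmetric. On this set the complex tangent space is spanned by $\partial/\partial\zeta_0$ alone. Equivalently, the hypersurface is foliated by the complex lines $\zeta_1=e^{i t}$, so it is Levi-flat, since the defining function $|\zeta_1|^2-1$ does not depend on $\zeta_0$ and its Levi form has rank zero on $\ker\partial\rho$.

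The only real obstacle is the signature claim in the degenerate case, i.e.\ ruling out a definite quotient. I handle it with the hyperbolic-pair splitting above: it identifies $W/\C r$ with the $h$-orthogonal complement of a hyperbolic plane of signature $(1,1)$, and since $\C^4$ has signature $(2,2)$, that complement also has signature $(1,1)$.
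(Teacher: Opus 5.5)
Your proposal is correct and follows essentially the same route as the paper: you reduce to $h|_W$, bound the radical via $\dim_\C W^{\perp_h}=1$, get both normal forms from Sylvester's law, and read off $\C\times S^1$ with its foliation by the complex lines $\{\zeta_1=e^{it}\}$ in the chart $\zeta_2=1$. The paper only asserts the signatures $(2,1)/(1,2)$ and $(1,1)$ from the ambient signature $(2,2)$, whereas you actually justify them, via the orthogonal splitting $\C^4=W\oplus W^{\perp_h}$ and the hyperbolic-pair argument; for the latter, note that $W=r^{\perp_h}$ by dimension count, which gives $\{r,s\}^{\perp_h}\subset W$.
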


\begin{proof}
Since \(h\) is nondegenerate on \(\C^4\) and \(\dim_{\C}W=3\), the orthogonal complement
\(W^{\perp_h}\) is one-dimensional.  Moreover,
\[
\rad(h|_W)=W\cap W^{\perp_h},
\]
so \(\dim_{\C}\rad(h|_W)\leq 1\).  Hence \(h|_W\) is either nondegenerate or has corank
one.

If \(h|_W\) is nondegenerate, then its signature on the \(3\)-dimensional complex space
\(W\) must be either \((2,1)\) or \((1,2)\), since the ambient signature is \((2,2)\).
By the Hermitian version of Sylvester's law of inertia, after a linear change of
coordinates on \(W\) one may write
\[
h|_W
=
|\zeta_0|^2+|\zeta_1|^2-|\zeta_2|^2
\]
or its negative.  The null cone is therefore given by
\eqref{eq:smooth-section-model}.  This is the standard spherical \(CR\) \(3\)-sphere in
\(\CP^2\), equivalently the projective model of the usual sphere
\(\{|z_1|^2+|z_2|^2=1\}\subset \C^2\) (see Setion~\ref{3sph}).

If \(h|_W\) is degenerate, then \(\rad(h|_W)\) is a complex line.  The induced form on
the quotient \(W/\rad(h|_W)\) is nondegenerate of signature \((1,1)\).  Choosing
coordinates adapted to the radical, one obtains
\[
h|_W
=
|\zeta_1|^2-|\zeta_2|^2,
\]
where \(\zeta_0\) is the coordinate along the radical direction.  Hence the section is
given by \eqref{eq:tangent-section-model}.  
It is singular precisely at the point \([1:0:0]\), where both \(\zeta_1\) and \(\zeta_2\) vanish. 
Away from this point, the equation \(|\zeta_1|=|\zeta_2|\) implies that there exists a unique
\(\vartheta\in\R/2\pi\Z\) such that
\[
\zeta_2=e^{i\vartheta}\zeta_1.
\]
Hence the smooth locus can be written as
\[
\bigcup_{\vartheta\in\R/2\pi\Z}
\Bigl\{
[\zeta_0:\zeta_1:e^{i\vartheta}\zeta_1]\in\CP^2
\;\Big|\;
\zeta_1\neq 0
\Bigr\}.
\]
For each fixed \(\vartheta\), the set
\[
L_\vartheta:=
\Bigl\{
[\zeta_0:\zeta_1:e^{i\vartheta}\zeta_1]\in\CP^2
\Bigr\}
\]
is a projective line contained in \(\Pi\cap\Qtwotwo\), and the smooth locus is foliated by the
complex curves \(L_\vartheta\setminus\{[1:0:0]\}\).
On the affine chart \(\zeta_2=1\) (and similarly on \(\zeta_1=1)\)), writing
\[
u=\frac{\zeta_0}{\zeta_2},
\qquad
v=\frac{\zeta_1}{\zeta_2},
\]
the section becomes
\[
\{(u,v)\in\C^2\mid |v|=1\}\cong \C\times S^1.
\]
For fixed \(\vartheta\), the leaf \(L_\vartheta\setminus\{[1:0:0]\}\) is given by
\[
v=e^{-i\vartheta},
\qquad
u\in\C,
\]
hence is a complex affine line. Therefore the smooth locus is foliated by complex
one-dimensional submanifolds, and since it is locally a direct product, its induced \(CR\) structure is Levi-flat.
\end{proof}

The smooth case of Theorem~\ref{thm:first-hyperplane-classification} recovers exactly
the spherical $CR$ $3$-sphere of Section~\ref{sec:embedded-spheres}: the section
$\Pi\cap\Qtwotwo$ is projectively equivalent to
$\{[\zeta_0:\zeta_1:\zeta_2]\in\CP^2\mid|\zeta_0|^2=|\zeta_1|^2+|\zeta_2|^2\}$,
now appearing as a projective slice of the flat twistor model rather than as a
hypersurface of $\CP^2$.

This recovery is especially transparent when the hyperplane contains a twistor fibre.

\begin{corollary}\label{prop:hyperplane-containing-fibre}
Let $\ell_p=\pi^{-1}(p)$ be the twistor fibre over $p\in S^4$ and let
$\Pi\supset\ell_p$ be a hyperplane.
\begin{enumerate}
\item[\rm(i)] If $p\notin S^3$, then $\ell_p\cap\Qtwotwo=\varnothing$, the section
$\Pi\cap\Qtwotwo$ is a smooth spherical $CR$ $3$-sphere, and in the affine chart
$\Pi\setminus\ell_p\cong\C^2$ it is modeled by
$\{|z_1|^2+|z_2|^2=1\}\subset\C^2$.
\item[\rm(ii)] If $p\in S^3$, then $\ell_p\subset\Qtwotwo$ and $\Pi\cap\Qtwotwo$
is of tangent type with Levi-flat smooth locus.
\end{enumerate}
\end{corollary}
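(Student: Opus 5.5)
The plan is to reduce everything to Theorem~\ref{thm:first-hyperplane-classification}: compute the restriction $h|_W$ where $\Pi=\PP(W)$ and $W\supset L_p:=\Span_\C\{z,Jz\}$ with $\ell_p=\PP(L_p)$.

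The key identity, already noted in the paper, is $h(z,Jz)=0$ together with $h(Jz,Jz)=\overline{h(z,z)}=h(z,z)$. Consequently $h|_{L_p}$ is the scalar form $h(z,z)\,\Id$ in the basis $\{z,Jz\}$. Moreover $p\in S^3$ if and only if $h(z,z)=0$, since $\Qtwotwo=\pi^{-1}(S^3)$ by \eqref{eq:Q22-as-lift}.

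\textbf{Case (i), $p\notin S^3$.} Here $h(z,z)\neq 0$, so $h|_{L_p}$ is definite, and no point of $\ell_p$ is null; hence $\ell_p\cap\Qtwotwo=\varnothing$. Because $h|_{L_p}$ is nondegenerate, $\C^4=L_p\oplus L_p^{\perp_h}$, and $L_p^{\perp_h}$ has the complementary signature. Say $h(z,z)>0$; then $h|_{L_p^\perp}$ is negative definite. Write $W=L_p\oplus(W\cap L_p^{\perp_h})$, where the second summand is a line $\C v$ with $h(v,v)<0$. Then $h|_W$ is nondegenerate of signature $(2,1)$, and we are in the smooth case of Theorem~\ref{thm:first-hyperplane-classification}. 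The opposite sign gives signature $(1,2)$, which is the same after replacing $h$ by $-h$. For the affine model, take coordinates $\zeta_0,\zeta_1$ on $L_p$ (orthonormal up to scale) and $\zeta_2$ on $\C v$. The line at infinity $\{\zeta_2=0\}$ is exactly $\ell_p$, so on $\Pi\setminus\ell_p$ we may set $\zeta_2=1$ and obtain $|z_1|^2+|z_2|^2=1$.

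\textbf{Case (ii), $p\in S^3$.} Now $h|_{L_p}\equiv 0$, so $\ell_p\subset\Qtwotwo$. The form $h|_W$ cannot be nondegenerate, because a nondegenerate Hermitian form on a $3$-dimensional space of signature $(2,1)$ or $(1,2)$ has maximal isotropic subspaces of dimension $1$, while $L_p$ is a $2$-dimensional isotropic subspace. So $h|_W$ is degenerate, and Theorem~\ref{thm:first-hyperplane-classification}(ii) gives the tangent type with Levi-flat smooth locus.

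One can also identify the radical directly. Since $L_p^{\perp_h}=L_p$ by the signature-$(2,2)$ maximality argument used for curves, we have $\rad(h|_W)=W\cap W^{\perp}\subset W^\perp\subset L_p^\perp=L_p$. Thus the singular point lies on $\ell_p$.

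The main obstacle is only the signature bookkeeping in case (i), namely checking that $W\cap L_p^{\perp_h}$ is a line on which $h$ has the opposite sign. This follows from the dimension count $\dim W\cap L_p^{\perp}\ge 3+2-4=1$, together with $L_p\cap L_p^\perp=0$ forcing equality, and the fact that $L_p^\perp$ carries the complementary part $(0,2)$ or $(2,0)$ of the total signature $(2,2)$.
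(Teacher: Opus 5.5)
Your proof is correct. It reduces to Theorem~\ref{thm:first-hyperplane-classification}, as the paper does, but you decide which case occurs by a different mechanism.

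The paper argues by incidence and $CR$ properties of the two model sections:
\begin{itemize}
\item[(i)] Because $\ell_p$ misses $\Pi\cap\Qtwotwo$, the theorem is said to force the smooth case. This tacitly uses that, in the tangent case, every line of $\Pi$ meets the section. The paper then simply asserts that $\ell_p$ can be taken as the line at infinity.
\item[(ii)] The smooth case is excluded because a Levi-nondegenerate spherical hypersurface cannot contain the complex line $\ell_p$.
\end{itemize}

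You instead compute $h|_{L_p}=h(z,z)\,\Id$ in the basis $\{z,Jz\}$ and use only Hermitian linear algebra:
\begin{itemize}
\item[(i)] The $h$-orthogonal splitting $W=L_p\oplus(W\cap L_p^{\perp_h})$ gives the signature $(2,1)$ or $(1,2)$. The same splitting produces coordinates in which $\ell_p=\{\zeta_2=0\}$, which is exactly the justification of the affine model that the paper leaves implicit.
\item[(ii)] Degeneracy of $h|_W$ follows from the bound on isotropic subspaces, or more directly from $W^{\perp_h}\subset L_p^{\perp_h}=L_p\subset W$. This uses no $CR$ property of the model sections. As a by-product it locates the singular point $\PP(\rad(h|_W))$ on $\ell_p$, which the paper's argument does not record.
\end{itemize}

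The paper's version is shorter and stays at the level of the geometric dichotomy. Yours is self-contained and fills in the steps the paper takes for granted.
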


\begin{proof}
Since $\Qtwotwo=\pi^{-1}(S^3)$, the fibre $\ell_p$ meets $\Qtwotwo$ if and only if
$p\in S^3$. In case (i), $\ell_p$ is disjoint from the section, so
Theorem~\ref{thm:first-hyperplane-classification} forces the smooth case; taking
$\ell_p$ as the line at infinity in $\Pi\cong\CP^2$ gives the standard affine sphere.
In case (ii), $\ell_p\subset\Pi\cap\Qtwotwo$, so the section contains a complex line
and cannot be Levi-nondegenerate; the only remaining case in the theorem is the
tangent one.
\end{proof}

Thus a hyperplane section of $\Qtwotwo$ recovers the spherical $CR$ geometry of
Section~\ref{sec:embedded-spheres} precisely when it contains a twistor fibre over a
point outside $S^3$.

\subsection{Classification up to \texorpdfstring{$j$}{j}-compatible symmetries}
We now revisit the hyperplane classification from the point of view of the
twistor-compatible symmetry group $\mathcal{G}_{j,\Qtwotwo}\simeq PSp(1,1)$ of
Corollary~\ref{cor:j-centralizer-Q22}.

A hyperplane $\Pi_v=\{[z]\in\CP^3\mid v_0z_0+v_1z_1+v_2z_2+v_3z_3=0\}$ has
$h$-normal vector
\[
n_v=(\overline{v_0},\overline{v_1},-\overline{v_2},-\overline{v_3})^T,
\]
so that $\Pi_v=(n_v)^{\perp_h}$. The key invariant is
\[
\Delta(\Pi_v):=h(n_v,n_v)=|v_0|^2+|v_1|^2-|v_2|^2-|v_3|^2.
\]

\begin{theorem}\label{thm:hyperplanes-Gj}
Under the action of \(\mathcal{G}_{j,\Qtwotwo}\), the orbit of a complex hyperplane \(\Pi\subset \CP^3\) is
completely determined by the sign of \(\Delta(\Pi)\).  More precisely, \(\mathcal{G}_{j,\Qtwotwo}\) has
exactly three orbits on the set of hyperplanes, represented by
\[
\Pi_+:=\{z_0=0\},
\qquad
\Pi_-:=\{z_2=0\},
\qquad
\Pi_0:=\{z_0-z_2=0\},
\]
corresponding respectively to
\[
\Delta(\Pi_+)>0,
\qquad
\Delta(\Pi_-)<0,
\qquad
\Delta(\Pi_0)=0.
\]
\end{theorem}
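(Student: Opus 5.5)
We will reduce the classification of hyperplanes to the classification of their \(h\)-normal vectors under the quaternionic group. Throughout we read \(\mathcal{G}_{j,\Qtwotwo}\) as the identity component \(PSp(1,1)\) of Corollary~\ref{cor:j-centralizer-Q22}, represented by matrices \(A\) with \(AS=S\overline A\) and \(A^*HA=\lambda H\), \(\lambda>0\). This restriction matters: the block swap \((z_0,z_1)\leftrightarrow(z_2,z_3)\) commutes with \(j\), has \(\lambda=-1\), and exchanges \(\Pi_+\) and \(\Pi_-\), so with it the theorem would list two orbits rather than three.

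\textbf{Step 1: reduction to normal vectors.} Since \(\Pi_v=(n_v)^{\perp_h}\), and \(h(Az,An)=\lambda\,h(z,n)\), we get \(A\cdot\Pi_v=(An_v)^{\perp_h}\). Hence the action on hyperplanes is equivalent to the linear action on \(h\)-normal lines \([n]\in\CP^3\). Because \(\lambda>0\), the identity \(h(An,An)=\lambda\,h(n,n)\) shows that the sign of \(\Delta\) is invariant. It therefore suffices to show transitivity on each of the three sets \(\{[n]\mid \operatorname{sign}h(n,n)=+,-,0\}\).

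\textbf{Step 2: transitivity via quaternions.} Pass to \(\C^4\simeq\HH^2\), \(n\leftrightarrow(q_0,q_1)=(n_0+jn_1,\,n_2+jn_3)\). Then \(h(n,n)=|q_0|^2-|q_1|^2\), and \(Sp(1,1)\) acts by right-\(\HH\)-linear maps preserving this form.

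In the non-null case, first rescale \(n\) by a positive real so that \(h(n,n)=\pm1\). We then need the orthogonal group of the quaternionic Hermitian form of signature \((1,1)\) to be transitive on vectors of a fixed nonzero norm. We argue this directly, or by the quaternionic Witt theorem. Given \(n\) with \(h(n,n)=1\), its \(h\)-orthogonal complement in \(\HH^2\) is a quaternionic line of norm \(-1\). Picking a unit vector \(m\) there, the matrix with quaternionic columns \((n,m)\) lies in \(Sp(1,1)\) and sends \(e_0=(1,0)\) to \(n\). Similarly, vectors of norm \(-1\) are sent to \(e_1=(0,1)\).

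In the null case, \(|q_0|=|q_1|=r>0\). We use the diagonal subgroup \(Sp(1)\times Sp(1)\), acting by unit quaternions on each entry, to bring \(n\) to \((r,r)\). We then apply a hyperbolic boost
\[
\begin{pmatrix}\cosh t&\sinh t\\ \sinh t&\cosh t\end{pmatrix}\in Sp(1,1),
\]
which multiplies \((1,1)\) by \(e^t\), to reach \((1,1)\). Transitivity on vectors implies transitivity on the complex lines \([n]\).

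\textbf{Step 3: identify representatives.} For \(\Pi_+=\{z_0=0\}\) we have \(v=(1,0,0,0)\), so \(\Delta=1\). For \(\Pi_-=\{z_2=0\}\) we have \(\Delta=-1\). For \(\Pi_0=\{z_0-z_2=0\}\) we have \(v=(1,0,-1,0)\), so \(\Delta=1-1=0\). This completes the list of three orbits.

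The main obstacle is Step 2, in particular the null case and making sure the maps built from quaternionic columns genuinely commute with \(j\). This is automatic once the maps are right-\(\HH\)-linear, by the normal form of Proposition~\ref{prop:commuting-with-j}.
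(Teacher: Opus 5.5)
Your proof is correct and has the same structure as the paper's. Both reduce to the $h$-normal vector $n_v$, get invariance of the sign of $\Delta$ from $h(An,An)=\lambda\,h(n,n)$ with $\lambda>0$, and then prove transitivity on normal lines of each sign. Your remark that the $\lambda<0$ element $R$ would merge $\Pi_\pm$ is exactly the paper's later comment on $\widehat G_j$.

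The genuine difference is in the non-null transitivity step. The paper first applies the block-diagonal map $D_n=\diag(U_x,U_y)$ to reach $(r,0,s,0)^T$, and then the explicit real boost $C_{r,s}$. You instead complete $n$ to a frame $[\,n\ Jn\ m\ Jm\,]$ with $m\in\{n,Jn\}^{\perp_h}$ and $h(m,m)=-1$. This matrix lies in $Sp(1,1)$ by the column description in the remark after Corollary~\ref{cor:j-centralizer-Q22}. Be explicit that the complement is taken for the quaternionic form, i.e.\ orthogonal to both $n$ and $Jn$. The complex complement $n^{\perp_h}\subset\C^4$ alone is $3$-dimensional and would not do.

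The paper's route is fully explicit and uniform across the three cases. Yours is the standard Witt-type argument: it avoids the normal-form computation and extends directly to higher-rank quaternionic unitary groups.

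In the null case your argument coincides with the paper's. Your final boost is superfluous there, since $(r,0,r,0)^T$ and $e_0+e_2$ already define the same hyperplane.
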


\begin{proof}
If \([A]\in \mathcal{G}_{j,\Qtwotwo}\), then \(A^*HA=H\), so
\[
h(An,An)=h(n,n)
\qquad\text{for all }n\in \C^4.
\]
Hence the sign of \(\Delta(\Pi)=h(n_\Pi,n_\Pi)\) is preserved under the action of
\(\mathcal{G}_{j,\Qtwotwo}\).  It remains to show that this sign is also a complete invariant.

Write
\(n=(x,y)\), for
\(x,y\in \C^2\),
so that
\[
h(n,n)=\|x\|^2-\|y\|^2.
\]
For a nonzero vector \(w=(\xi,\eta)\in \C^2\), define
\[
U_w
=
\frac{1}{\sqrt{|\xi|^2+|\eta|^2}}
\begin{pmatrix}
\overline{\xi}&\overline{\eta}\\
-\eta&\xi
\end{pmatrix}.
\]
Then \(U_w\in U(2)\), \(U_ww=(\|w\|,0)\), and \(U_w\) has quaternionic form, hence
commutes with \(J_0\).  Therefore the block-diagonal matrix
\[
D_n
=
\begin{pmatrix}
U_x&0\\
0&U_y
\end{pmatrix}
\]
satisfies
\[
D_n^*HD_n=H,
\qquad
D_nS=S\overline{D_n},
\]
so \([D_n]\in \mathcal{G}_{j,\Qtwotwo}\).

Assume first that \(h(n,n)>0\).  After rescaling \(n\), we may assume
\[
h(n,n)=1.
\]
Set
\(r:=\|x\|,
s:=\|y\|\),
so that \(r^2-s^2=1\).  Applying \(D_n\), we may suppose
\[
n=(r,0,s,0)^T.
\]
Now consider
\[
C_{r,s}
=
\begin{pmatrix}
rI_2&sI_2\\
sI_2&rI_2
\end{pmatrix}.
\]
Since \(r,s\in \R\) and \(r^2-s^2=1\), one has
\[
C_{r,s}^*HC_{r,s}=H,
\qquad
C_{r,s}S=S\overline{C_{r,s}},
\]
so \([C_{r,s}]\in \mathcal{G}_{j,\Qtwotwo}\).  Moreover,
\[
C_{r,s}e_0=(r,0,s,0)^T.
\]
Thus every positive vector is \(\mathcal{G}_{j,\Qtwotwo}\)-equivalent to \(e_0\), and hence every positive
hyperplane is \(\mathcal{G}_{j,\Qtwotwo}\)-equivalent to
\[
\Pi_+=e_0^{\perp_h}=\{z_0=0\}.
\]

The negative case is entirely analogous.  If \(h(n,n)<0\), normalize so that
\(h(n,n)=-1\), write
\(r:=\|y\|,
s:=\|x\|,
r^2-s^2=1\),
and after applying \(D_n\) reduce to \(n=(s,0,r,0)^T\).  Since
\(C_{r,s}e_2=(s,0,r,0)^T\),
every negative vector is \(\mathcal{G}_{j,\Qtwotwo}\)-equivalent to \(e_2\), so every negative hyperplane is
equivalent to
\[
\Pi_-=e_2^{\perp_h}=\{z_2=0\}.
\]

Finally, assume \(h(n,n)=0\).  Then \(\|x\|=\|y\|=:r>0\).  Applying \(D_n\), we reduce
to
\[
n=(r,0,r,0)^T,
\]
which is projectively equivalent to \(e_0+e_2\).  Thus every null hyperplane is
\(\mathcal{G}_{j,\Qtwotwo}\)-equivalent to
\[
\Pi_0=(e_0+e_2)^{\perp_h}=\{z_0-z_2=0\}.
\]

This proves that the sign of \(\Delta\) is a complete invariant for the \(\mathcal{G}_{j,\Qtwotwo}\)-action on
hyperplanes.
\end{proof}

The proposition refines the direct classification of the previous subsection.  Intrinsically,
the positive and negative cases both yield the same smooth spherical \(CR\)
\(3\)-sphere.  Extrinsically, however, they form two distinct \(\mathcal{G}_{j,\Qtwotwo}\)-orbits.

\begin{corollary}
Let \(\Pi\subset \CP^3\) be a complex hyperplane.
\begin{enumerate}
\item[\rm (i)] If \(\Delta(\Pi)\neq 0\), then \(\Pi\cap \Qtwotwo\) is a smooth spherical
\(CR\) \(3\)-sphere.
\item[\rm (ii)] If \(\Delta(\Pi)=0\), then \(\Pi\) is tangent to \(\Qtwotwo\), and
\(\Pi\cap \Qtwotwo\) is of the singular Levi-flat type.
\item[\rm (iii)] The smooth hyperplane sections split into two distinct \(\mathcal{G}_{j,\Qtwotwo}\)-orbits,
according to the sign of \(\Delta\).
\end{enumerate}
\end{corollary}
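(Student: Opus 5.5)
The plan is to derive the corollary by combining Theorem~\ref{thm:first-hyperplane-classification} with Theorem~\ref{thm:hyperplanes-Gj}. The link between them is that $\Delta(\Pi)$ detects whether $h|_W$ is degenerate. Write $\Pi=\PP(W)$ with $W=(n_v)^{\perp_h}$. By the proof of Theorem~\ref{thm:first-hyperplane-classification}, $\rad(h|_W)=W\cap W^{\perp_h}$, and $W^{\perp_h}=\C n_v$ because $h$ is nondegenerate. Therefore $h|_W$ is degenerate if and only if $n_v\in W=n_v^{\perp_h}$, that is, if and only if $h(n_v,n_v)=\Delta(\Pi)=0$.

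For (i), suppose $\Delta(\Pi)\neq0$. Then $h|_W$ is nondegenerate, and case (i) of Theorem~\ref{thm:first-hyperplane-classification} gives a smooth spherical $CR$ $3$-sphere. The signature matches as well: $\C^4=W\oplus\C n_v$ is an $h$-orthogonal splitting, so $W$ has signature $(1,2)$ when $\Delta>0$ and $(2,1)$ when $\Delta<0$. Either way the null cone has the form \eqref{eq:smooth-section-model}.

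For (ii), suppose $\Delta(\Pi)=0$. Then $n_v$ spans $\rad(h|_W)$, and $[n_v]\in\Qtwotwo$. Tangency should be checked directly. The real tangent space of $\Qtwotwo$ at $[n_v]$ corresponds to vectors $w$ with $\Re h(n_v,w)=0$ modulo $n_v$. Its maximal complex subspace is $n_v^{\perp_h}/\C n_v=W/\C n_v$, which is exactly $T_{[n_v]}\Pi$. So $\Pi$ is tangent to $\Qtwotwo$ at the point $[n_v]$, and this point is the singular point $[1:0:0]$ of the model \eqref{eq:tangent-section-model}. Case (ii) of Theorem~\ref{thm:first-hyperplane-classification} then gives the singular Levi-flat description.

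For (iii), Theorem~\ref{thm:hyperplanes-Gj} says the sign of $\Delta$ is a complete invariant. It is also genuinely invariant, since $h(An,An)=h(n,n)$ after normalizing $\lambda=1$ in the identity component. This normalization deserves a brief comment: an element with $\lambda<0$ would flip the sign, but the corollary concerns the group $PSp(1,1)$ with $A^*HA=H$, so this does not arise. The smooth sections have $\Delta\neq0$, so they fall into exactly the two orbits of $\Pi_+$ and $\Pi_-$.

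The only step needing care is the tangency claim in (ii), which amounts to identifying the complex tangent hyperplane to $\Qtwotwo$ at a null point $[n]$ with $\PP(n^{\perp_h})$. This is a short first-order computation from $h(z,z)=0$.
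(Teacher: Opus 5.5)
Your proposal is correct and follows the paper's route: the paper's proof simply cites Theorem~\ref{thm:first-hyperplane-classification} for (i)--(ii) and Theorem~\ref{thm:hyperplanes-Gj} for (iii). Your additional steps fill in links the paper leaves implicit. These are the equivalence $\rad(h|_W)=W\cap\C n_v\neq 0 \iff \Delta(\Pi)=h(n_v,n_v)=0$, the explicit identification of $T_{[n_v]}\Pi$ with the complex tangent space $n_v^{\perp_h}/\C n_v$ of $\Qtwotwo$ (which justifies the word ``tangent''), and the remark that only elements with $A^*HA=H$, not $\lambda<0$, preserve the sign of $\Delta$.
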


\begin{proof}
Statements \((i)\) and \((ii)\) are exactly the content of
Theorem~\ref{thm:first-hyperplane-classification}.  Statement \((iii)\) follows from
Theorem~\ref{thm:hyperplanes-Gj}.
\end{proof}

It is sometimes useful to allow projective transformations preserving $h$ up to sign.
Set
\[
\widehat{G}_j:=\{[A]\in\PGL(4,\C)\mid A^*HA=\pm H,\ AS=S\overline{A}\}.
\]
The matrix
\[
R=\begin{pmatrix}0&0&1&0\\0&0&0&1\\1&0&0&0\\0&1&0&0\end{pmatrix}
\]
satisfies $R^*HR=-H$ and $RS=S\overline{R}$, and sends $e_0$ to $e_2$, so it
interchanges the positive and negative orbits. Under $\widehat{G}_j$ the three
$\mathcal{G}_{j,\Qtwotwo}$-orbits therefore merge into two: $\Delta(\Pi)\neq 0$ and $\Delta(\Pi)=0$.

\begin{remark}
Every non-tangent hyperplane section of $\Qtwotwo$ is intrinsically a copy of the
spherical $CR$ $3$-sphere, but the $\mathcal{G}_{j,\Qtwotwo}$-classification records more, it records on
which side of the split Hermitian geometry the hyperplane lies. The same spherical $CR$
manifold thus appears in two extrinsically distinct positions relative to the twistor
structure, and this distinction collapses only after passing to $\widehat{G}_j$. This three-orbit structure mirrors
the real orbits decomposition of $\CP^3$ into two open $SU(2,2)$-orbits and the unique
closed orbit $\Qtwotwo$, viewed here from the dual perspective of hyperplanes, see
Remark~\ref{rem:orbits}.
\end{remark}

\subsection{Tangent versus non-tangent hyperplanes and their interpretation on the base}
We now reinterpret the hyperplane classification from the twistor point of view. The
key distinction of tangency  corresponds geometrically to whether
the section is a global twistor section over $S^3$ or whether one fibre becomes
exceptional.

Let $M_\Pi:=\Pi\cap\Qtwotwo$. Since every twistor fibre is a projective line, the
geometry of $M_\Pi$ is controlled by whether $\Pi$ contains an entire twistor fibre.

\begin{proposition}
Let $\Pi\subset\CP^3$ be a complex hyperplane.
\begin{enumerate}
\item[\rm(i)] \textbf{Non-tangent case.} $\Pi$ contains no twistor fibre over $S^3$,
every fibre of $\pi|_{\Qtwotwo}$ meets $M_\Pi$ in exactly one point, and
\[
\pi|_{M_\Pi}:M_\Pi\longrightarrow S^3
\]
is a diffeomorphism. The induced $CR$ structure on $S^3$ is the standard spherical one.

\item[\rm(ii)] \textbf{Tangent case.} $\Pi$ is tangent to $\Qtwotwo$ at some
$n\in\Qtwotwo$, and contains the entire twistor fibre $\ell_n$ over $x=\pi(n)\in S^3$.
Every fibre over $S^3\setminus\{x\}$ meets $M_\Pi$ in one point, so
\[
\pi\colon M_\Pi\setminus\ell_n\longrightarrow S^3\setminus\{x\}
\]
is a diffeomorphism, but the section fails to extend globally across $x$.
\end{enumerate}
\end{proposition}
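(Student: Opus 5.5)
The plan is to reduce everything to linear algebra of the restriction $h|_W$, where $\Pi=\PP(W)$, $W=n^{\perp_h}$, and $\Delta(\Pi)=h(n,n)$. The twistor fibre over $x\in S^3$ is $\ell_x=\PP(F_x)$ with $F_x=\Span_\C\{z,Jz\}$ a $J$-stable, totally $h$-isotropic $2$-plane. By the description of fibres as $j$-invariant lines, together with Theorem~\ref{thm:fibres-vs-lines}, these are the graphs $L_x$ with $x\in SU(2)$. Since $\dim W=3$ and $\dim F_x=2$, the intersection $W\cap F_x$ has dimension at least $1$. So every fibre meets $\Pi$ either in exactly one point or entirely, and the fibre is contained in $\Pi$ iff $F_x\subset n^{\perp_h}$, i.e. iff $n\in F_x^{\perp_h}=F_x$, using that a maximal isotropic plane in signature $(2,2)$ satisfies $F^{\perp_h}=F$ (the same fact used in the proof on holomorphic curves). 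Thus $\Pi$ contains the fibre $\ell_x$ iff $[n]\in\ell_x$. In particular, this can happen for some $x\in S^3$ iff $[n]\in\Qtwotwo$, i.e. iff $\Delta(\Pi)=0$. In that case $x=\pi([n])$ is unique, because distinct fibres are disjoint.

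For (i), $\Delta\neq0$, so $[n]\notin\Qtwotwo$ and each fibre $\ell_x$, $x\in S^3$, meets $\Pi$ in exactly one point. That point lies in $\Qtwotwo$, so $\pi|_{M_\Pi}$ is a bijection onto $S^3$. To upgrade this to a diffeomorphism, I would write the intersection point explicitly as $\PP(F_x\cap W)$: the kernel of the linear functional $h(n,\cdot)$ restricted to $F_x$, which is nonzero on $F_x$. In the graph model, this is $z\mapsto h((z,xz),n)=\langle z,a\rangle-\langle xz,b\rangle$ with $n=(a,b)$. Its kernel line depends real-analytically on $x\in SU(2)$, which gives a smooth section $s:S^3\to M_\Pi$ inverse to $\pi$. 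The map $\pi|_{M_\Pi}$ is also smooth, since $M_\Pi$ is a smooth submanifold by Theorem~\ref{thm:first-hyperplane-classification}(i). Hence $\pi|_{M_\Pi}$ is a diffeomorphism. The induced $CR$ structure is transported from $M_\Pi$, which by Theorem~\ref{thm:first-hyperplane-classification}(i) is the standard spherical $3$-sphere, so it is spherical on $S^3$.

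For (ii), $\Delta=0$ means $[n]\in\Qtwotwo$, and $\Pi=n^{\perp_h}$ is the complex tangent hyperplane to $\Qtwotwo$ at $[n]$. This holds because $T_{[n]}\Qtwotwo$ is cut out by $\Re h(n,\cdot)=0$, so its maximal complex subspace is $n^{\perp_h}$. By the first paragraph, $\Pi$ contains exactly one fibre, namely $\ell_n$ over $x=\pi([n])$. Every other fibre over $S^3\setminus\{x\}$ meets $M_\Pi$ in one point, and the same explicit kernel-line formula gives a smooth inverse on $S^3\setminus\{x\}$. It remains to show that the section does not extend across $x$. As $x'\to x$, the functional $h(n,\cdot)|_{F_{x'}}$ tends to zero, so the limiting kernel line depends on the direction of approach. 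I would check this in the graph model with $n=(a,xa)$: linearizing $x'=x\exp(\epsilon\xi)$ gives the functional $-\epsilon\langle x\xi z,xa\rangle$, whose kernel varies with $\xi\in\mathfrak{su}(2)$. So different directions give different limit points on $\ell_n$, and no continuous extension exists. Alternatively, $M_\Pi\setminus\ell_n$ has closure meeting $\ell_n$ in more than one point, because the Levi-flat leaves $L_\vartheta$ from Theorem~\ref{thm:first-hyperplane-classification}(ii) pass through different points.

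I expect this non-extendability, together with the smoothness of the inverse near the degenerate fibre, to be the main obstacle. Everything else follows from the dimension count and $F_x^{\perp_h}=F_x$.
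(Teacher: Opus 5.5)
Your proof is correct, and it follows a different and more complete route than the paper's. For (i), the paper uses Corollary~\ref{prop:hyperplane-containing-fibre}: a fibre over $S^3$ inside $\Pi$ would put a complex line in the section, so the section could not be the Levi-nondegenerate spherical one. The diffeomorphism is then attributed to ``bijectivity together with compactness''. For (ii), the paper notes $h(n,Jn)=0$, so $Jn\in W$ and $\ell_n\subset\Pi$. It then tacitly uses that a plane cannot contain two disjoint lines. The failure of the section to extend across $x$ is stated without argument.

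Your single criterion $\ell_x\subset\Pi\iff[n]\in\ell_x$, which you derive from $F_x^{\perp_h}=F_x$, replaces both the detour through the Levi-form classification and the $h(n,Jn)=0$ computation. It also gives, at once, the equivalence of tangency with $\Delta(\Pi)=0$ and the uniqueness of the fibre contained in $\Pi$. The paper's route buys brevity and ties the statement to the $CR$ classification already established. Yours is self-contained linear algebra and proves strictly more.

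Your explicit kernel-line section supplies what compactness alone does not: a continuous bijection of compact spaces is only a homeomorphism, so one needs a smooth inverse or transversality of $\Pi$ to the fibres. Your linearization $-\epsilon\,a^*\xi z$ genuinely proves non-extendability. In fact, since the lines $[\xi a]$, $\xi\in\mathfrak{su}(2)$, exhaust $\CP^1$, it shows that the cluster set of the section at $x$ is the whole line $\ell_n$.

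Two small points should be tightened.
\begin{enumerate}
\item In (ii), before calling $\pi|_{M_\Pi\setminus\ell_n}$ smooth, note that the only singular point of $M_\Pi$ is $[n]$, because $\rad(h|_W)=\C n$, and $[n]$ lies on $\ell_n$. Alternatively, observe that your section is an injective immersion that is a homeomorphism onto its image.
\item In your alternative closure argument, all the leaves $L_\vartheta$ pass through the same singular point $[n]$. What you actually need is that $\ell_n=L_{\vartheta_0}$ for some $\vartheta_0$ and that $L_\vartheta\to L_{\vartheta_0}$ as $\vartheta\to\vartheta_0$.
\end{enumerate}
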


\begin{proof}
In case (i), if $\Pi$ contained a twistor fibre $\ell_x\subset\Qtwotwo$, then
$M_\Pi$ would contain a complex line, forcing the tangent case by
Corollary~\ref{prop:hyperplane-containing-fibre}. Hence no fibre is contained in
$\Pi$, every fibre meets $\Pi$ in exactly one point, and bijectivity together with
compactness gives the diffeomorphism.

In case (ii), write $\Pi=\PP(W)$ with $W=n^{\perp_h}$. Since $h(n,Jn)=0$, one has
$Jn\in W$, so $\ell_n=\PP(\Span_\C\{n,Jn\})\subset\Pi\cap\Qtwotwo$. For
$y\neq x$, the fibre $\pi^{-1}(y)$ is distinct from $\ell_n$ and hence meets $\Pi$
in exactly one point, giving the claimed diffeomorphism on the complement.
\end{proof}

The discriminant locus of the section is the set of points of $S^3$ over which the
fibre is not transverse to $\Pi$. In the non-tangent case it is empty; in the tangent
case it consists of the single point $\{x\}$, which plays the same role as the
discriminant locus in the theory of quadrics developed in
\cite{SalamonViaclovsky2009}, i.e. it is the base-point where the section ceases to be
a graph over the fibration.

The $CR$ geometry of the two cases is strikingly different. In the non-tangent case,
$M_\Pi$ is a smooth spherical $CR$ $3$-sphere, and $\pi|_{M_\Pi}$ transfers the
standard spherical $CR$ structure to $S^3$ , 
recovering the model of
Section~\ref{sec:embedded-spheres}.

In the tangent case the smooth locus of $M_\Pi$ is $\C\times S^1$, which via the
diffeomorphism $S^3\setminus\{x\}\cong\R^3$ equips $\R^3$ with a Levi-flat $CR$
structure. This is intrinsically different from the standard spherical $CR$ structure
on $\R^3\cong S^3\setminus\{\mathrm{pt}\}$ (the Heisenberg group model, which is
Levi-nondegenerate): the tangent section induces a $CR$ structure on the open dense
subset $S^3\setminus\{x\}$ that cannot be extended to a Levi-nondegenerate structure
across the missing point $x$, and is not locally equivalent to the flat model
$\Qtwotwo$ anywhere. This is the simplest instance of the general phenomenon whereby
projective slices of twistor spaces define natural $CR$ structures only on open dense
subsets of the base, rather than globally \cite{SalamonViaclovsky2009}.

\section{\texorpdfstring{$j$}{j}-invariant smooth quadrics and their quadratic sections}\label{sec:j-invariant-quadrics}
We now study smooth $j$-invariant quadrics in $\CP^3$ and their interaction with
$\Qtwotwo$. These are precisely the nondegenerate \emph{real quadrics} of
Salamon--Viaclovsky \cite{SalamonViaclovsky2009} (after translating their quaternionic
notation into the involution $j$ used here), and they form the natural class for
producing branched twistor geometry on the distinguished $3$-sphere $\Sigma\subset S^4$.

\subsection{Matrix description and identification with the quaternionic real form}

Recall from Section~\ref{sec:ambient-models} the anti-holomorphic involution \(j\)
on \(\CP^3\), induced by the anti-linear map \(J(z)=S\overline{z}\) with \(S^TS=I\)
and \(S^2=-I\) \eqref{eq:j-section2}.
A quadric in \(\CP^3\) is given by a nonzero symmetric matrix
\[
Q\in Sym_4(\C),
\qquad
Q^T=Q,
\]
through the equation
\[
\mathcal S_Q
=
\bigl\{
[z]\in \CP^3
\mid
z^TQz=0
\bigr\}.
\]
It is smooth if and only if \(Q\) is nondegenerate, equivalently \(\det Q\neq 0\).

\begin{proposition}
Let \(Q\in Sym_4(\C)\setminus\{0\}\).  Then the following are equivalent:
\begin{enumerate}
\item[\rm(i)] the quadric \(\mathcal S_Q\subset \CP^3\) is \(j\)-invariant;
\item[\rm(ii)] there exists \(\lambda\in\C^\times\) such that
\begin{equation}\label{eq:jinv-up-to-scale-smooth}
S^T\overline Q\,S=\lambda Q.
\end{equation}
\end{enumerate}
Moreover, after multiplying \(Q\) by a nonzero complex scalar, one may normalize
\eqref{eq:jinv-up-to-scale-smooth} to
\begin{equation}\label{eq:jinv-normalized-smooth}
S^T\overline Q\,S=Q.
\end{equation}
Under this normalization, \(Q\) has the form
\begin{equation}\label{eq:jinv-matrix-explicit-smooth}
Q=
\begin{pmatrix}
a& ib& c& d\\
ib& \overline a& -\overline d& \overline c\\
c& -\overline d& e& if\\
d& \overline c& if& \overline e
\end{pmatrix},
\qquad
a,c,d,e\in\C,
\quad
b,f\in\R.
\end{equation}
Hence the normalized \(j\)-invariant symmetric matrices form a real vector space of
dimension \(10\), and the smooth ones are exactly those of the form
\eqref{eq:jinv-matrix-explicit-smooth} with nonzero determinant.
\end{proposition}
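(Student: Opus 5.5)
The plan is to translate $j$-invariance of $\mathcal S_Q$ into an identity between two symmetric matrices, and then solve that identity entrywise.

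\textbf{Step 1: the image quadric.} Since $S$ is real, for $z\in\C^4$ we have
\[
(S\overline z)^TQ(S\overline z)=\overline{\,z^T\bigl(S^T\overline Q S\bigr)z\,}.
\]
Hence $j(\mathcal S_Q)=\mathcal S_{Q'}$ with $Q':=S^T\overline Q S$, which is again symmetric and nonzero. Therefore (ii)$\Rightarrow$(i) is immediate. For (i)$\Rightarrow$(ii) I need the fact that two nonzero quadratic forms on $\C^4$ with the same zero locus in $\CP^3$ are proportional. I will check this by rank, which is the only point needing care.
\begin{itemize}
\item If $\mathrm{rk}\,Q\ge 3$, the form is irreducible. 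Then $Q'$ lies in the radical of the prime ideal $(q)$, so $q\mid q'$, and equal degrees force $q'=\lambda q$.
\item If $\mathrm{rk}\,Q=2$, the zero set is a pair of distinct planes $\{\ell_1\ell_2=0\}$. Then $q'$ is divisible by both $\ell_1$ and $\ell_2$, so $q'=\lambda\ell_1\ell_2$.
\item If $\mathrm{rk}\,Q=1$, then $q=\ell^2$. We get $q'=\ell m$, and equality of zero sets forces $m\propto\ell$.
\end{itemize}
In every case this gives \eqref{eq:jinv-up-to-scale-smooth}.

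\textbf{Step 2: normalization.} Apply the relation twice. Since $S$ is real and $S^2=-I$,
\[
S^T\,\overline{S^T\overline QS}\,S=(S^2)^TQ\,S^2=Q .
\]
On the other hand, using \eqref{eq:jinv-up-to-scale-smooth} the same expression equals $S^T\overline{\lambda Q}S=\overline\lambda\lambda Q$. Hence $|\lambda|=1$; write $\lambda=e^{i\alpha}$. Replacing $Q$ by $\mu Q$ changes $\lambda$ into $\overline\mu\lambda/\mu$. Choosing $\mu=e^{i\alpha/2}$ therefore yields \eqref{eq:jinv-normalized-smooth}. This $\mu$ is unique up to a real factor, so the normalized matrices form a real vector space.

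\textbf{Step 3: explicit form.} Write $S=\diag(J_0,J_0)$ and $Q=\bigl(\begin{smallmatrix}X&Y\\Y^T&Z\end{smallmatrix}\bigr)$ with $X,Z$ symmetric. Then \eqref{eq:jinv-normalized-smooth} splits into three block equations:
\[
J_0^T\overline XJ_0=X,\qquad J_0^T\overline YJ_0=Y,\qquad J_0^T\overline ZJ_0=Z.
\]
A direct computation gives
\[
J_0^T\begin{pmatrix}p&q\\r&s\end{pmatrix}J_0=\begin{pmatrix}s&-r\\-q&p\end{pmatrix}.
\]
For symmetric $X$ this forces $x_{11}=a$ and $x_{22}=\overline a$, together with $x_{12}=-\overline{x_{12}}$, i.e.\ $x_{12}=ib$ with $b\in\R$; similarly $Z$ gives $e$ and $f$. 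For $Y$ the equation forces $Y=\bigl(\begin{smallmatrix}c&d\\-\overline d&\overline c\end{smallmatrix}\bigr)$. Together these give exactly \eqref{eq:jinv-matrix-explicit-smooth}, with parameters $a,c,d,e\in\C$ and $b,f\in\R$, i.e.\ real dimension $4\cdot2+2=10$.

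Smoothness is equivalent to $\det Q\ne0$ by the standard Jacobian criterion, because $\nabla(z^TQz)=2Qz$ vanishes at some $z\neq 0$ exactly when $Q$ is singular.

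The main obstacle is the proportionality step in Step 1 for degenerate $Q$. It is handled by the rank case split above; for the smooth quadrics of real interest, irreducibility alone already suffices.
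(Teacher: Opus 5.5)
Your proof is correct and follows the same route as the paper: you identify $j(\mathcal S_Q)$ with $\mathcal S_{S^T\overline{Q}S}$ and invoke proportionality of quadratic forms with the same zero locus, use that $Q\mapsto S^T\overline{Q}S$ is an involution to get $|\lambda|=1$ and rescale, and then solve entrywise (your $2\times 2$ block computation with $J_0^T\overline{M}J_0$ is a tidy version of the paper's entrywise check). Your rank case analysis supplies the proportionality step that the paper only asserts, and your rescaling $\mu=e^{i\alpha/2}$ is the correct one, whereas the paper's $\mu=e^{-i\theta/2}$ only works when $\lambda=\pm1$.
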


\begin{proof}
Set
\[
q(z):=z^TQz.
\]
The quadric \(\mathcal S_Q\) is \(j\)-invariant if and only if
\[
q(z)=0
\quad\Longleftrightarrow\quad
q(Jz)=0
\qquad
\forall\,z\in\C^4\setminus\{0\}.
\]
Since \(Jz=S\overline z\), one computes
\[
q(Jz)
=
(S\overline z)^TQ(S\overline z)
=
\overline z^{\,T}S^TQS\,\overline z
=
z^T\bigl(S^T\overline Q\,S\bigr)z.
\]
Therefore \(\mathcal S_Q\) is \(j\)-invariant if and only if the two nonzero quadratic
forms \(z^TQz\) and \(z^T(S^T\overline Q\,S)z\) define the same projective quadric,
which is equivalent to \eqref{eq:jinv-up-to-scale-smooth}.

Now define
\[
T:Sym_4(\C)\to Sym_4(\C),
\qquad
T(Q):=S^T\overline Q\,S.
\]
Since \(S\) is real and \(S^T S=I\), one has \(T^2=\Id\).  Thus, if \(T(Q)=\lambda Q\),
then
\[
Q=T^2(Q)=T(\lambda Q)=\overline\lambda\,T(Q)=|\lambda|^2Q,
\]
hence \(|\lambda|=1\).  Writing \(\lambda=e^{i\theta}\) and choosing
\(\mu=e^{-i\theta/2}\), one gets
\[
T(\mu Q)=\overline\mu\,T(Q)=\overline\mu\,\lambda Q=\mu Q,
\]
which proves the normalized form \eqref{eq:jinv-normalized-smooth}.

Finally, writing
\[
Q=(q_{rs})_{r,s=1}^4,
\qquad
q_{rs}=q_{sr},
\]
and imposing \(S^T\overline Q\,S=Q\), a direct entrywise computation yields
\[
q_{22}=\overline{q}_{11},
\qquad
q_{12}=-\overline{q}_{12},
\qquad
q_{24}=\overline{q}_{13},
\qquad
q_{23}=-\overline{q}_{14},
\]
\[
q_{44}=\overline{q}_{33},
\qquad
q_{34}=-\overline{q}_{34},
\]
which is exactly \eqref{eq:jinv-matrix-explicit-smooth}.
\end{proof}

It is often convenient to reorder the coordinates as
\[
(z_0,z_2,z_1,z_3),
\]
namely to conjugate by
\[
P=
\begin{pmatrix}
1&0&0&0\\
0&0&1&0\\
0&1&0&0\\
0&0&0&1
\end{pmatrix}.
\]
In this basis the quaternionic structure is represented by
\[
\mathbf J
=
P^{-1}SP
=
\begin{pmatrix}
0&-I_2\\
I_2&0
\end{pmatrix}.
\]
Then every normalized \(j\)-invariant symmetric matrix can be written uniquely in the
block form

\[Q=\{A\mid B\}
:=
\begin{pmatrix}
A&B\\
-\overline B&\overline A
\end{pmatrix},
\qquad
A^T=A,
\qquad
B^*=-B.
\]

\begin{remark}
After the above permutation of basis, the normalized \(j\)-invariant matrices are
exactly the nondegenerate ``real'' symmetric bilinear forms considered by
Salamon--Viaclovsky.  In particular, all the structural results that we use below
apply precisely to the class of quadrics treated in this section.
\end{remark}

\subsection{Classification under the group commuting with \texorpdfstring{$j$}{j}}

Let
\[
\mathcal G_j
:=
\mathbb P\bigl\{G\in GL(4,\C)\mid GS=S\overline G\bigr\}.
\]
This is the projective group of complex linear transformations commuting with the
quaternionic structure; equivalently,
\[
\mathcal G_j\simeq \PGL(2,\HH).
\]
It acts on the space of quadrics by
\[
Q\longmapsto G^TQG.
\]

\begin{theorem}
All smooth $j$-invariant quadrics in $\CP^3$ belong to a single $\mathcal{G}_j$-orbit.
\end{theorem}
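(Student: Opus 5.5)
The plan is to translate the statement into the classification of nondegenerate quaternionic skew-Hermitian forms on $\HH^2$, and to prove that all such forms are equivalent by a Gram--Schmidt argument.

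\textbf{Step 1: reformulate the invariance.} Let $Q$ be normalized as in \eqref{eq:jinv-normalized-smooth} and let $B(z,w)=z^TQw$ be the associated complex symmetric bilinear form. Since $S$ is real, \eqref{eq:jinv-normalized-smooth} is equivalent to $S^TQS=\overline Q$. This gives
\[
B(Jz,Jw)=\overline{B(z,w)}\qquad\text{for all } z,w\in\C^4.
\]
The $\mathcal G_j$-action $Q\mapsto G^TQG$ with $GS=S\overline G$ preserves the normalization. Indeed,
\[
S^T\overline{G^TQG}\,S=G^TS^T\overline QSG=G^TQG,
\]
using $\overline G S=S^{-1}GS\,S=\dots$, i.e.\ $S\overline G=GS$ and $\overline G S^{-1}\cdots$, which I will check carefully. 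So the statement reduces to showing that any two such nondegenerate normalized $Q$ are related by some $G$ commuting with $J$, up to a real scalar.

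\textbf{Step 2: build the quaternionic form.} Identify $\C^4\simeq\HH^2$ via $J$, as in the discussion after Proposition~\ref{prop:commuting-with-j}. Define
\[
K(z,w):=B(Jz,w)+B(z,w)\,\mathbf j \;\in\HH,
\]
with the order of the summands and the side of the $\mathbf j$ to be fixed by the computation. I expect $K$ to be right $\HH$-sesquilinear and skew-Hermitian, $K(w,z)=-\overline{K(z,w)}$, using the symmetry of $B$, the identity of Step 1, and $J^2=-\Id$. Moreover $K$ is nondegenerate exactly when $\det Q\neq0$, and $G\in GL(2,\HH)$ acts on $K$ by pullback compatibly with $Q\mapsto G^TQG$. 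This is the Salamon--Viaclovsky dictionary mentioned in the Remark after the block form $\{A\mid B\}$. With it, the theorem becomes: all nondegenerate skew-Hermitian forms on $\HH^2$ are $GL(2,\HH)$-equivalent, up to a real scale.

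\textbf{Step 3: normal form.} I would prove this by a Gram--Schmidt argument, which also works on $\HH^n$.

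First, some $v$ has $K(v,v)\neq0$. Otherwise polarization gives $K(v,w)+K(w,v)=0$ for all $v,w$. Replacing $w$ by $wq$ yields $\mathrm{Re}\bigl(K(v,w)\,q\bigr)=0$ for every $q\in\HH$, hence $K\equiv0$, contradicting nondegeneracy.

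Next, $K(v,v)$ is a nonzero pure imaginary quaternion, and $K(v\lambda,v\lambda)=\overline\lambda\,K(v,v)\,\lambda$. Since unit quaternions act transitively on the purely imaginary unit sphere and $|\lambda|$ rescales, I can normalize $K(v,v)=\mathbf i$.

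Finally, the $K$-orthogonal complement of $v$ is a right $\HH$-line on which $K$ is nondegenerate, so I repeat the step there. This yields a basis with Gram matrix $\mathbf i\,I_2$. Hence every smooth $j$-invariant quadric is $\mathcal G_j$-equivalent to a single model, for instance the one with $A=0$ and $B=iI_2$ in the block form, or equivalently $a=c=d=e=0$, $b=f=1$ in \eqref{eq:jinv-matrix-explicit-smooth}.

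\textbf{Main obstacle.} I expect the hardest part to be Step 2, namely getting the conventions right. One has to choose left versus right quaternionic scalars and the correct combination of $B(Jz,w)$ and $B(z,w)$ so that $K$ is genuinely sesquilinear and skew-Hermitian, and so that the matrix $G$ acts correctly. If this proves too fiddly, I would fall back on a direct matrix argument in the block form $Q=\{A\mid B\}$ with $B^*=-B$. There I would first use block matrices in $\mathcal G_j$ to make the skew-Hermitian block $B$ diagonal, and then use the transitivity of $Sp(1)$ on $S^2$, applied entrywise, to kill $A$. Nondegeneracy is what guarantees that the needed pivots are nonzero.
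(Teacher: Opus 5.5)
Your argument is correct, and it takes a different route from the paper. The paper proves the theorem in one line. It quotes Salamon--Viaclovsky's Proposition~4.2 (every real nondegenerate symmetric bilinear form is $GL(2,\HH)$-equivalent to the identity) and projectivizes, relying on the remark that the normalized $j$-invariant matrices are exactly their real forms after the permutation $P$. You instead reprove that classification. The map $Q\mapsto K$ turns $\mathcal G_j$-equivalence of normalized nondegenerate $Q$ into $GL(2,\HH)$-equivalence of nondegenerate quaternionic skew-Hermitian forms on $\HH^2$. Your Gram--Schmidt step then shows that all such forms have Gram matrix $\mathbf{i}\,I_2$. The citation buys brevity. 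Your route is self-contained and works verbatim on $\HH^n$. It also explains \emph{why} there is no modulus: skew-Hermitian quaternionic forms, unlike Hermitian ones, carry no signature. Your normal form ($b=f=1$, all other parameters zero) agrees with theirs. For $Q=I_4$ one gets $K(e_0,e_0)=K(e_2,e_2)=-\mathbf{j}$ and $K(e_0,e_2)=0$, and a unit-quaternion rescaling turns this into $\mathbf{i}\,I_2$.

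The convention you left open closes up as follows. With right scalars $v\mathbf{j}:=Jv$, put
\[
K(v,w):=B(Jv,w)-\mathbf{j}\,B(v,w)=B(Jv,w)-\overline{B(v,w)}\,\mathbf{j}.
\]
\begin{itemize}
\item Right $\HH$-linearity in $w$ reduces to $\overline{B(Jv,w)}=-B(v,Jw)$, which follows from $B(Jz,Jw)=\overline{B(z,w)}$ and $J^2=-\Id$.
\item The same identity, together with the symmetry of $B$, gives $K(w,v)=-\overline{K(v,w)}$.
\item The $\mathbf{j}$-component of $K$ recovers $B$, so for $G$ commuting with $J$, the relation $K(Gv,Gw)=K'(v,w)$ is equivalent to $G^TQG=Q'$.
\end{itemize}
Your first guess $B(Jz,w)+B(z,w)\mathbf{j}$ is not even right $\C$-linear in $w$.

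There are two small slips. In Step~1 the identity you need is $\overline G S=SG$, obtained by conjugating $GS=S\overline G$ since $S$ is real. In Step~3, for skew-Hermitian $K$ the polarized identity says that $K(v,w)$ is \emph{real}, not that $\Re(K(v,w)q)=0$; the conclusion $K\equiv0$ is unchanged.

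You will not need the fallback, and as sketched it would not suffice. Diagonalizing the complex block $B$ leaves the off-diagonal entries of $A$ untouched, and unit-quaternion rescalings of basis vectors cannot remove them. One really has to diagonalize the quaternionic Gram matrix, which is exactly what your Step~3 does.
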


\begin{proof}
By \cite[Proposition~4.2]{SalamonViaclovsky2009}, any real nondegenerate symmetric
bilinear form is $GL(2,\HH)$-equivalent to the identity. Projectivizing gives the claim.
\end{proof}

A convenient representative is the Segre-type quadric
\[
\mathcal S_{\mathrm{std}}
=
\bigl\{
[z_0:z_1:z_2:z_3]\in \CP^3
\mid
z_0z_3-z_1z_2=0
\bigr\},
\]
represented by the symmetric matrix
\[Q_{\mathrm{std}}
=
\begin{pmatrix}
0&0&0&1\\
0&0&-1&0\\
0&-1&0&0\\
1&0&0&0
\end{pmatrix}.\]

Thus the absolute classification is trivial: the moduli appear only after fixing
$\Qtwotwo=\pi^{-1}(\Sigma)$.

\subsection{The discriminant circle and its position relative to
\texorpdfstring{$\Sigma$}{Sigma}}
Let $\mathcal{S}\subset\CP^3$ be a smooth $j$-invariant quadric. 
Recall that the twistor projection
\(\pi:\CP^3\longrightarrow S^4\)
has fibres \(\pi^{-1}(p)\cong\CP^1\), the twistor lines. Since \(\mathcal S\) is a quadric, for each
\(p\in S^4\) the restriction of the defining equation of \(\mathcal S\) to the fibre
\(\pi^{-1}(p)\) is a homogeneous polynomial of degree \(2\). Accordingly, the fibre either
meets \(\mathcal S\) in two distinct points, or is tangent to \(\mathcal S\), or is entirely
contained in \(\mathcal S\). The \emph{discriminant locus} is therefore defined as
\[
D(\mathcal S):=
\{p\in S^4\mid \pi^{-1}(p)\cap\mathcal S
\text{ is not two distinct points}\}.
\]
Equivalently, \(D(\mathcal S)\) is the set of points over which the twistor fibre fails to
intersect \(\mathcal S\) transversely in two simple points. One writes
\[
D(\mathcal S)=D_0(\mathcal S)\cup D_1(\mathcal S),
\]
where \(D_0(\mathcal S)\) consists of the points \(p\in S^4\) such that
\(\pi^{-1}(p)\subset\mathcal S\), and \(D_1(\mathcal S)\) consists of the points over which the
intersection is a double point but the fibre is not entirely contained in \(\mathcal S\).
For smooth \(j\)-invariant quadrics, Salamon--Viaclovsky show that
\(D(\mathcal S)=D_0(\mathcal S)\), and that this set is a \emph{geometric circle} in \(S^4\).
Here, by a geometric circle we simply mean a round circle in the sphere, namely the
intersection of \(S^4\subset\R^5\) with a real affine \(2\)-plane in \(\R^5\). In particular,
it need not be a great circle, it is a great circle only when the affine \(2\)-plane passes
through the origin, and in general it is a small circle. Moreover, \(\mathcal S\) is uniquely
determined by this discriminant circle. Thus, once the distinguished round \(3\)-sphere
\(\Sigma\subset S^4\) with \(\Qtwotwo=\pi^{-1}(\Sigma)\) is fixed, the relative geometry of
\(\mathcal S\) is completely encoded by the position of \(D(\mathcal S)\) with respect to
\(\Sigma\)
(see \cite[Theorem~3.10 and Section~3.4]{SalamonViaclovsky2009})

\begin{lemma}\label{lem:relative-position-discriminant-circle}
Let $\Sigma\simeq S^3\subset S^4$ be the distinguished round $3$-sphere and let
$D\subset S^4$ be a geometric circle. Up to a conformal transformation of the pair
$(S^4,\Sigma)$, exactly one of the following four arrangements occurs:
\[
D\subset\Sigma,\qquad
|D\cap\Sigma|=2,\qquad
|D\cap\Sigma|=1,\qquad
D\cap\Sigma=\varnothing.
\]
After a conformal transformation preserving $\Sigma$, one may normalize
\[
S^4=\HH\cup\{\infty\},\qquad
\Sigma=\{q\in\HH:|q|=1\}\cup\{\infty\},\qquad
D\subset\widehat{\C}:=\C\cup\{\infty\}\subset\HH\cup\{\infty\},
\]
so that $D$ becomes a generalized circle in $\widehat{\C}$ and the four cases
above are the four possible relative positions of a generalized circle with respect
to the unit circle $S^1=\Sigma\cap\widehat{\C}$.
\end{lemma}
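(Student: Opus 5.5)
The plan is to work with the conformal group of $S^4$ stabilizing $\Sigma$, i.e.\ $\mathrm{Conf}(S^4,\Sigma)$, which via the quaternionic model contains the image of $PSp(1,1)$ from Corollary~\ref{cor:j-centralizer-Q22} together with the reflection in $\Sigma$. The proof has two parts. First, the four cases are mutually exclusive and exhaustive. Second, each case admits the stated normal form.

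\textbf{Exhaustiveness.} A geometric circle $D$ is the intersection of $S^4\subset\R^5$ with a real affine $2$-plane $P$. The sphere $\Sigma$ is the intersection of $S^4$ with an affine hyperplane $E$, so $D\cap\Sigma=S^4\cap(P\cap E)$. Either $P\subset E$, giving $D\subset\Sigma$, or $P\cap E$ is an affine line or empty. In the latter situation the intersection of a line with a round sphere has $0$, $1$ or $2$ points. This yields exactly the four listed cases. All four are invariant under conformal maps preserving $\Sigma$, since these maps are bijections of $S^4$ sending circles to circles and $\Sigma$ to itself, hence preserving the cardinality of $D\cap\Sigma$ and the containment $D\subset\Sigma$.

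\textbf{Normalization.} Use stereographic coordinates $S^4=\HH\cup\{\infty\}$ in which $\Sigma$ is the unit sphere $\{|q|=1\}$. Note that in this chart $\infty\notin\Sigma$ (the displayed union with $\{\infty\}$ should be read accordingly, or one uses a chart where $\Sigma$ is a hyperplane; I would follow the unit-sphere convention). A circle $D$ spans a $2$-dimensional round sphere together with a point of $\HH$ not on it, or more simply spans a real affine $2$-plane (or a $2$-sphere) in $\HH$. The key step is to show that the stabilizer of the unit sphere, which contains $Sp(1)\times Sp(1)$ acting by $q\mapsto aq\bar b$ (in fact $SO(4)$) together with the hyperbolic translations of the ball, can move $D$ into the complex line $\widehat{\C}=\C\cup\{\infty\}$.

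I would first apply a hyperbolic isometry of the unit ball $\{|q|<1\}$ to arrange that the $2$-sphere through $D$ orthogonal to $\Sigma$ passes through $0$ (or $\infty$). Here I use that $D$ lies in a unique round $2$-sphere (or $2$-plane) orthogonal to $\Sigma$ whenever $D\not\subset\Sigma$: take the $2$-sphere through $D$ and its inversion image. Such a $2$-sphere through $0$ orthogonal to $\Sigma$ is a real $2$-plane through $0$ in $\HH\cong\R^4$. Then $SO(4)$, which fixes $\Sigma$, rotates that plane onto $\C$. When $D\subset\Sigma$, $D$ spans a $2$-plane section of the unit sphere, and a ball isometry moves it to a great circle, which $SO(4)$ then rotates into $\C$.

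Once $D\subset\widehat{\C}$, we have $\Sigma\cap\widehat{\C}=S^1$, and the four cases become the four positions of a generalized circle relative to $S^1$. Identifying $D\cap\Sigma$ with $D\cap S^1$ gives the matching.

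\textbf{Main obstacle.} The existence of a $2$-sphere containing $D$ and orthogonal to $\Sigma$, uniformly across the cases including tangency, is where I expect the difficulty. I would handle it by the inversion trick: let $\iota$ be the reflection in $\Sigma$. The circles $D$ and $\iota(D)$ lie on a common $2$-sphere when they are distinct and cospherical; otherwise, including when $D=\iota(D)$, one takes any $2$-sphere through $D$ fixed by $\iota$. I would verify this by a Lorentzian linear-algebra argument in $\R^{5,1}$, where circles correspond to spacelike $3$-planes and $\Sigma$ corresponds to a spacelike vector.
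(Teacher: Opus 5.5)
Your proof is correct, and it departs from the paper's at the point where the paper's argument is thin.

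\textbf{Comparison with the paper.} The paper proves the lemma in one step. It invokes the transitivity of $SO(4)\subset\mathrm{Conf}(S^4,\Sigma)$ on oriented $2$-planes of $\R^4$ to place $D$ in $\widehat{\C}$, and then reads off the four cases from the planar picture. Taken literally, this does not suffice. $SO(4)$ fixes $0$, so it can carry a bounded circle into $\C$ only if the affine $2$-plane spanned by that circle already passes through $0$. That is a codimension-two condition on the $9$-dimensional family of circles in $S^4$.

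Your preliminary step supplies exactly the missing reduction. You choose a $2$-sphere $T\supset D$ orthogonal to $\Sigma$. A hyperbolic isometry of the ball then moves a point of $T\cap\{|q|<1\}$ to $0$, so that $T$ becomes a linear $2$-plane together with $\infty$. Only then do you rotate by $SO(4)$. In the case $D\subset\Sigma$ you use the transitivity of the M\"obius group of $S^3$ on circles; the paper itself invokes this fact later, in the proof of the relative classification theorem.

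Your exhaustiveness argument in $\R^5$ (affine $2$-plane against affine hyperplane, then a line against a sphere) is also independent of the normalization. The paper, by contrast, obtains the four cases only after normalizing. You are also right that $\infty\notin\Sigma$ in the unit-sphere chart. The paper's ``$\cup\{\infty\}$'' is a slip, as its later use of $\Sigma\cap\widehat{\C}=\{|z|=1\}$ confirms.

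\textbf{Two minor adjustments.} First, the $2$-sphere through $D$ orthogonal to $\Sigma$ is not unique when $\iota(D)=D$ and $D\not\subset\Sigma$. In that case every $2$-sphere through $D$ is orthogonal to $\Sigma$, so you should claim existence only, which is all you use.

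Second, the step you flag as the main obstacle is elementary. Normalize $\Sigma=S^4\cap n^{\perp}$ to a great sphere, as it is in the paper, and write $D=S^4\cap P$. The affine $3$-plane $P+\R n$ is invariant under the reflection in $n^{\perp}$.
\begin{itemize}
\item If $n$ is not parallel to $P$, then $T=S^4\cap(P+\R n)$ contains both $D$ and $\iota(D)$ and is orthogonal to $\Sigma$. This covers the tangent case and also $D\subset\Sigma$.
\item If $n$ is parallel to $P$, then $D$ is itself $\iota$-invariant, and any $2$-sphere through $D$ works.
\end{itemize}
Finally, $\iota$ exchanges the two components of $S^4\setminus\Sigma$, so the connected $\iota$-invariant sphere $T$ meets both. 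Hence $T\cap\{|q|<1\}\neq\varnothing$, as your isometry step requires.
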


\begin{proof}
Since $SO(4)\subset\mathrm{Conf}(S^4,\Sigma)$, i.e. the group of conformal transformations of \(S^4\) preserving \(\Sigma\), acts transitively on oriented
$2$-planes in $\R^4$, one may bring $D$ into the complex slice $\widehat{\C}$,
where $\Sigma\cap\widehat{\C}=S^1$. The four cases are the elementary
arrangements of two generalized circles in the Riemann sphere.
\end{proof}

\subsection{Relative classification with respect to
\texorpdfstring{$\Qtwotwo$}{Q22}}

Recall from Section~\ref{sec:symmetries} that
\[\mathcal{G}_{j,\Qtwotwo}\simeq PSp(1,1)\] 
is naturally identified with the conformal
automorphism group of the pair $(S^4,\Sigma)$. Since $\mathcal{S}$ is uniquely
determined by $D(\mathcal{S})$, two
quadrics are $\mathcal{G}_{j,\Qtwotwo}$-equivalent if and only if some conformal
automorphism of $(S^4,\Sigma)$ maps one discriminant circle to the other. The
classification therefore reduces to the conformal geometry of circles in $S^4$
relative to $\Sigma$.

The key invariant for this reduction is the \emph{inversive distance} between two
circles, introduced by Coxeter \cite{Coxeter1966}. After reducing to the planar model
of Lemma~\ref{lem:relative-position-discriminant-circle}, normalize
\[
D(\mathcal{S})=\{|z-a|=r\},
\qquad a\geq 0,\quad r>0,
\]
and
\[
\Sigma\cap\widehat{\C}=S^1=\{|z|=1\}.
\]
For two disjoint circles of radii \(\rho_1,\rho_2\) whose centres are at distance
\(d\), Coxeter's inversive distance \(\delta_{\mathrm{Cox}}\) is defined by first
sending the pair, by a Möbius transformation, to two concentric circles of radii
\(R>r>0\), and then setting
\[
\delta_{\mathrm{Cox}}=\log\frac{R}{r}.
\]
An equivalent formula is
\[
\cosh\delta_{\mathrm{Cox}}
=
\frac{|\rho_1^2+\rho_2^2-d^2|}{2\rho_1\rho_2},
\]
see \cite[formula~(4.2)]{Coxeter1966}. Applying this with \(\rho_1=r\), \(\rho_2=1\),
and \(d=a\) gives
\[
\cosh\delta_{\mathrm{Cox}}
=
\frac{|a^2-r^2-1|}{2r}.
\]

This same algebraic expression continues to make sense even when the circles
intersect. Indeed, if two circles of radii \(\rho_1,\rho_2\) meet at angle
\(\theta\), then at an intersection point the radii to that point are normal to the
circles, so the angle between the circles equals the angle between those radii. By
the cosine rule in the triangle formed by the two centres and an intersection point,
\[
\cos\theta
=
\frac{\rho_1^2+\rho_2^2-d^2}{2\rho_1\rho_2}.
\]
This suggests defining the relative invariant
\begin{equation}\label{eq:inversive-distance}
I\bigl(D(\mathcal{S}),\Sigma\bigr)
:=
\frac{|a^2-r^2-1|}{2r},
\end{equation}
which agrees with \(\cosh\delta_{\mathrm{Cox}}\) in the disjoint case and with
\(|\cos\theta|\) in the intersecting case.

In the present normalization, this quantity completely encodes the relative position
of \(D(\mathcal S)\) and \(S^1\). Indeed,
\[
I<1
\iff
|a^2-r^2-1|<2r
\iff
(r-1)^2<a^2<(r+1)^2
\iff
|r-1|<a<r+1,
\]
which is exactly the condition for two circles of radii \(r\) and \(1\) with centre
distance \(a\) to meet in two distinct points. Likewise,
\[
I=1
\iff
|a^2-r^2-1|=2r
\iff
a=r+1 \ \text{or}\ a=|r-1|,
\]
which is precisely the tangency condition, while
\[
I>1
\iff
a>r+1 \ \text{or}\ a<|r-1|,
\]
which means that the circles are disjoint. Finally, \(I=0\) if and only if
\(a^2=r^2+1\), i.e.\ the circles meet orthogonally. Thus
\begin{equation}\label{eq:I-cases}
I<1\Longleftrightarrow|D\cap\Sigma|=2,\qquad
I=1\Longleftrightarrow|D\cap\Sigma|=1,\qquad
I>1\Longleftrightarrow D\cap\Sigma=\varnothing.
\end{equation}

\begin{remark}
The quantity \(I(D(\mathcal{S}),\Sigma)\) is well defined as a relative invariant
because the inversive distance is preserved under Möbius transformations, two pairs
of circles can be mapped to one another by a Möbius transformation if and only if
they have the same inversive distance \cite{Coxeter1966}. In particular, \(I\) is
independent of the choice of planar normal form in
Lemma~\ref{lem:relative-position-discriminant-circle}.

Moreover, in the present setting \(I\) is a complete invariant for the relative
position of \(D(\mathcal S)\) with respect to \(\Sigma\): after reduction to the
planar model, two pairs \((D(\mathcal S),\Sigma)\) and \((D(\mathcal S'),\Sigma)\)
with the same value of \(I\) are Möbius-equivalent by a transformation preserving
\(S^1\), hence by a conformal transformation of \((S^4,\Sigma)\). Combined with the
fact that a smooth \(j\)-invariant quadric is uniquely determined by its discriminant
circle, this shows that two such quadrics with the same inversive distance from
\(\Sigma\) are \(\mathcal{G}_{j,\Qtwotwo}\)-equivalent. 
\end{remark}

\begin{theorem}
Let $\mathcal{S},\mathcal{S}'\subset\CP^3$ be smooth $j$-invariant quadrics. The
following are equivalent:
\begin{enumerate}
\item[\rm(i)] $\mathcal{S}$ and $\mathcal{S}'$ are $\mathcal{G}_{j,\Qtwotwo}$-equivalent;
\item[\rm(ii)] the pairs $(D(\mathcal{S}),\Sigma)$ and $(D(\mathcal{S}'),\Sigma)$ are
conformally equivalent in $S^4$.
\end{enumerate}
Consequently:
\begin{enumerate}
\item[\rm(a)] the quadrics with $D(\mathcal{S})\subset\Sigma$ form a single
distinguished orbit;
\item[\rm(b)] if $D(\mathcal{S})\not\subset\Sigma$, the
$\mathcal{G}_{j,\Qtwotwo}$-orbit is parametrized by
$I(D(\mathcal{S}),\Sigma)\in[0,+\infty)$.
\end{enumerate}
\end{theorem}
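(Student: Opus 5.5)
The proof of (i)$\iff$(ii) will pass through the twistor projection, using that $\mathcal G_{j,\Qtwotwo}\simeq PSp(1,1)$ descends to the conformal group of the pair $(S^4,\Sigma)$ (Corollary~\ref{cor:j-centralizer-Q22} and the remark following it), together with the Salamon--Viaclovsky facts recalled above: $D(\mathcal S)=D_0(\mathcal S)$ is a geometric circle, and $\mathcal S$ is uniquely determined by it.

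For (i)$\Rightarrow$(ii), let $[A]\in\mathcal G_{j,\Qtwotwo}$ with $[A](\mathcal S)=\mathcal S'$. Since $[A]$ commutes with $j$, it maps twistor fibres to twistor fibres. Hence it induces a map $\phi_A$ on $S^4$ with $\pi\circ[A]=\phi_A\circ\pi$. This $\phi_A$ is conformal, being the standard action of $PGL(2,\HH)$ on $\HP^1$, and it preserves $\Sigma$ because $[A]$ preserves $\Qtwotwo=\pi^{-1}(\Sigma)$. A fibre $\pi^{-1}(p)$ is contained in $\mathcal S$ if and only if $[A]\pi^{-1}(p)=\pi^{-1}(\phi_A(p))$ is contained in $\mathcal S'$. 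Therefore $\phi_A(D_0(\mathcal S))=D_0(\mathcal S')$, and so $\phi_A(D(\mathcal S))=D(\mathcal S')$.

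For (ii)$\Rightarrow$(i), let $\phi$ be a conformal map of $(S^4,\Sigma)$ with $\phi(D(\mathcal S))=D(\mathcal S')$. I would first replace $\phi$ by an orientation-preserving one. If $\phi$ reverses orientation, compose it with a reflection in a $3$-sphere orthogonal to $\Sigma$ that contains $D(\mathcal S)$. Such a sphere exists: in the planar normal form of Lemma~\ref{lem:relative-position-discriminant-circle}, take the hypersphere containing $\widehat{\C}$ and one further direction tangent to $\Sigma$.

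Next, I would use that orientation-preserving conformal maps of $S^4$ are exactly the images of $PGL(2,\HH)$. Lift $\phi$ to some $[A]\in\mathcal G_j$. Then $[A]$ preserves $\pi^{-1}(\Sigma)=\Qtwotwo$, so by Proposition~\ref{prop:full-stabilizer-Q22} we have $A^*HA=\lambda H$. Hence $[A]$ lies in the $j$-compatible stabilizer, and I must check that it lies in the component identified with $\mathcal G_{j,\Qtwotwo}$. If $\lambda<0$, $\phi$ swaps the two sides of $\Sigma$. In that case compose with a reflection-type element such as the inversion in $\Sigma$. Its lift via the matrix $R$ is anti-$h$ but orientation-reversing on $S^4$, so I would instead pair it with another reflection fixing $D(\mathcal S')$. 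I would handle this bookkeeping case by case in the planar model.

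Now $[A]\mathcal S$ is a smooth $j$-invariant quadric whose discriminant circle is $\phi(D(\mathcal S))=D(\mathcal S')$, by the computation in the first direction. Uniqueness of the quadric with a given discriminant circle gives $[A]\mathcal S=\mathcal S'$.

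For the consequences, I would work in the planar normalization of Lemma~\ref{lem:relative-position-discriminant-circle}. For (a), if $D\subset\Sigma$, then $D$ is a round circle in $\Sigma\cong S^3$. Since $\mathrm{Conf}(S^3)$ acts transitively on circles and extends to $(S^4,\Sigma)$, there is a single orbit. For (b), by the preceding remark, $I$ is a Möbius invariant of the pair $(D,S^1)$ and separates the relative positions via \eqref{eq:I-cases}. Two pairs with equal $I$ are Möbius-equivalent by a map preserving $S^1$, and such a map extends to $(S^4,\Sigma)$ by acting on $\widehat{\C}$ and rotating the orthogonal directions. Every value in $[0,\infty)$ is realized by suitable $a,r$ in \eqref{eq:inversive-distance}.

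The main obstacle is the orientation and component issue in (ii)$\Rightarrow$(i): conformal maps of $(S^4,\Sigma)$ include orientation-reversing and side-swapping ones that do not lift to $PSp(1,1)$. The plan is to exploit the reflection symmetries of the configuration, namely reflections fixing a given circle, to reduce to the identity component. One must also check that the conformal equivalence of pairs $(D,S^1)$ inside $\widehat{\C}$ really extends to all of $(S^4,\Sigma)$.
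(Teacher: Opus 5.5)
\textbf{There is a genuine gap in the side-swapping step, and it cannot be closed because the statement is false there.} Your direction (i)$\Rightarrow$(ii) is correct. So is your orientation fix: reflection in a linear hyperplane of $\HH$ containing $\C$ preserves $\Sigma$, each of its sides, and $D$.

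You have also located exactly the point the paper's one-line proof passes over. $\mathcal G_{j,\Qtwotwo}\simeq PSp(1,1)$ is not the whole conformal group of $(S^4,\Sigma)$. It is only the part preserving the orientation of $S^4$ and each side of $\Sigma$. Indeed, on the fibre over $q$ one has $h=(1-|q|^2)(|z_0|^2+|z_1|^2)$, so $A^*HA=H$ forces $[A]$ to preserve $\{h>0\}=\pi^{-1}\{|q|<1\}$.

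The gap is your plan for $\lambda<0$. To undo a side swap you need a side-swapping element of $\mathrm{Conf}(S^4,\Sigma)$ that preserves $D(\mathcal S')$. Such elements exist in two cases:
\begin{itemize}
\item when $D\subset\Sigma$, since the inversion in $\Sigma$ fixes $D$ pointwise;
\item when $I<1$: after sending the two crossing points to $0,\infty$, the half-turn $z\mapsto -z$ preserves both circles and swaps the sides of $S^1$.
\end{itemize}
When $I\ge 1$ this fails. The set $D(\mathcal S')\setminus\Sigma$ lies in a single component of $S^4\setminus\Sigma$, so every map preserving $\Sigma$ and $D(\mathcal S')$ preserves that component. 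Composing with the inversion in $\Sigma$ does not help either, since it moves $D(\mathcal S')$ to its mirror image.

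For $I\ge 1$, with $\mathcal G_{j,\Qtwotwo}=PSp(1,1)$ as in the paper, the statement is false. Take $r\neq 1$ and the quadrics $\mathcal S_{0,r}=\{z_0z_1-r^2z_2z_3=0\}$ and $\mathcal S_{0,1/r}$.
\begin{itemize}
\item Their discriminant circles are $\{p_1=0,\ |p_0|=1/r\}$ and $\{p_1=0,\ |p_0|=r\}$. These are exchanged by the inversion in $\Sigma$, so (ii) holds.
\item Both have $I=(1+r^2)/(2r)>1$.
\item The two circles lie on opposite sides of $\Sigma$, so no element of $PSp(1,1)$ carries one quadric to the other.
\item They are exchanged by $R$, since $F_{0,r}\circ R=-r^2F_{0,1/r}$, but $R$ has $\lambda=-1$.
\end{itemize}
Internally and externally tangent circles behave the same way, for example $\mathcal S_{2,1}$ and $R(\mathcal S_{2,1})=\mathcal S_{2/3,1/3}$, both with $I=1$.

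So the statement must be repaired, in one of two ways:
\begin{itemize}
\item Use the full $j$-compatible stabilizer $\widehat G_j$ ($\lambda\in\R^\times$) in (i). Then your proof is complete after the orientation fix, since every orientation-preserving conformal map of $(S^4,\Sigma)$ lifts to $\mathcal G_j$ and automatically satisfies $A^*HA=\lambda H$.
\item Keep $PSp(1,1)$ and require the map in (ii) to preserve the sides of $\Sigma$. Then (a) is unchanged. Part (b) becomes: one orbit for each $I\in[0,1)$, and two orbits for each $I\ge 1$, distinguished by which side of $\Sigma$ contains $D\setminus\Sigma$.
\end{itemize}
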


\begin{proof}
The equivalence (i)$\Leftrightarrow$(ii) follows from the uniqueness of the quadric
given its discriminant circle. For case (a), the conformal group of $\Sigma\simeq S^3$
acts transitively on circles in $\Sigma$. For case (b),
Lemma~\ref{lem:relative-position-discriminant-circle} reduces the problem to the
classification of generalized circles in $\widehat{\C}$ relative to $S^1$ under
$PSU(1,1)$, which is exactly the one-parameter classification by $I$.
\end{proof}

\begin{corollary}\label{cor:branch-locus-types-s3}
Let $\mathcal{S}\subset\CP^3$ be a smooth $j$-invariant quadric and set
\[\Gamma_{\mathcal{S}}:=D(\mathcal{S})\cap\Sigma.\] Exactly four possibilities occur,
characterized by \eqref{eq:I-cases}:
\begin{enumerate}
\item[\rm(i)] $\Gamma_{\mathcal{S}}$ is a circle $\Longleftrightarrow$
$D(\mathcal{S})\subset\Sigma$;
\item[\rm(ii)] $|\Gamma_{\mathcal{S}}|=2\Longleftrightarrow$
$D(\mathcal{S})\not\subset\Sigma$ and $I<1$;
\item[\rm(iii)] $|\Gamma_{\mathcal{S}}|=1\Longleftrightarrow$
$D(\mathcal{S})\not\subset\Sigma$ and $I=1$;
\item[\rm(iv)] $\Gamma_{\mathcal{S}}=\varnothing\Longleftrightarrow$
$D(\mathcal{S})\not\subset\Sigma$ and $I>1$.
\end{enumerate}
\end{corollary}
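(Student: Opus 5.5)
The plan is to reduce everything to the planar normal form of Lemma~\ref{lem:relative-position-discriminant-circle} and then apply the trichotomy \eqref{eq:I-cases}. First, by the Salamon--Viaclovsky description recalled above, $D(\mathcal S)$ is a geometric circle in $S^4$. Hence $\Gamma_{\mathcal S}=D(\mathcal S)\cap\Sigma$ is the intersection of a round circle with a round $3$-sphere. Lemma~\ref{lem:relative-position-discriminant-circle} says there are exactly four mutually exclusive arrangements, and up to a conformal transformation of $(S^4,\Sigma)$ we may assume $D(\mathcal S)\subset\widehat{\C}$ and $\Sigma\cap\widehat{\C}=S^1$. Since conformal maps of the pair preserve cardinalities of intersections and the property ``$D\subset\Sigma$'', it suffices to prove the four equivalences in this normal form.

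Next we compute $\Gamma_{\mathcal S}$ in the normal form. Because $D(\mathcal S)\subset\widehat{\C}$, we have $\Gamma_{\mathcal S}=D(\mathcal S)\cap\Sigma=D(\mathcal S)\cap(\Sigma\cap\widehat{\C})=D(\mathcal S)\cap S^1$.

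Case (i): if $D(\mathcal S)\subset\Sigma$, then $\Gamma_{\mathcal S}=D(\mathcal S)$ is a circle. Conversely, if $\Gamma_{\mathcal S}$ is a circle, then it is a circle contained in the circle $D(\mathcal S)$, so it equals $D(\mathcal S)$, and hence $D(\mathcal S)\subset\Sigma$. In the normal form this case means $D(\mathcal S)=S^1$. If $D(\mathcal S)$ is a straight line through $\infty$ rather than a Euclidean circle, we first apply a Möbius map of $\widehat{\C}$ preserving $S^1$ (an element of $PSU(1,1)$, extended to $(S^4,\Sigma)$) to make it a bounded circle $\{|z-a|=r\}$ with $a\ge 0$, $r>0$.

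Cases (ii)–(iv): when $D(\mathcal S)\not\subset\Sigma$, the circle $\{|z-a|=r\}$ is different from $S^1$. Two distinct circles meet in at most two points, so $|\Gamma_{\mathcal S}|\in\{0,1,2\}$. The elementary computation preceding \eqref{eq:I-cases} shows that $|a^2-r^2-1|<2r$, $=2r$, $>2r$ is equivalent to $|r-1|<a<r+1$ (secant), $a\in\{r+1,|r-1|\}$ (tangent), or disjoint, respectively. This gives exactly $I<1$, $I=1$, $I>1$, and these conditions correspond to $|\Gamma_{\mathcal S}|=2,1,0$.

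Finally, $I$ is conformally invariant by the preceding remark, so these conditions do not depend on the chosen normalization. The four cases are exhaustive and mutually exclusive.

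The main obstacle is a degenerate subcase of tangency: $a=|r-1|$ with $r=1$, $a=0$. This is the coincident circle $D=S^1$, where $I=1$ even though $D\subset\Sigma$. I would handle it by noting explicitly that the hypothesis $D(\mathcal S)\not\subset\Sigma$ excludes $(a,r)=(0,1)$. With that exclusion, $I=1$ forces genuine single-point tangency. This is why conditions (ii)–(iv) carry the hypothesis $D(\mathcal S)\not\subset\Sigma$.
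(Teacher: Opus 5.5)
Your proposal is correct and follows the paper's own argument. Like the paper, you reduce via Lemma~\ref{lem:relative-position-discriminant-circle} to the planar pair $\{|z-a|=r\}$, $\{|z|=1\}$ and read off the four cases from \eqref{eq:I-cases}. Your extra observation that the coincident configuration $(a,r)=(0,1)$ gives $I=1$ even though $D(\mathcal S)=S^1\subset\Sigma$ is a correct refinement that \eqref{eq:I-cases} glosses over; it is exactly why the hypothesis $D(\mathcal S)\not\subset\Sigma$ is needed in (ii)--(iv).
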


\begin{proof}
After reduction to the planar model, this is the elementary classification of
the relative position of $\{|z-a|=r\}$ and $\{|z|=1\}$, encoded by \eqref{eq:I-cases}.
\end{proof}
We now record an explicit family of smooth $j$-invariant quadrics that realizes all
four cases of Corollary~\ref{cor:branch-locus-types-s3}.

\begin{proposition}\label{prop:explicit-family-smooth-j-invariant-quadrics}
For $a\in\R$ and $r>0$, let
\[\mathcal{S}_{a,r}=\bigl\{z_0z_1-a(z_0z_3+z_1z_2)+(a^2-r^2)z_2z_3=0\bigr\}
\subset\CP^3.\]
Then:
\begin{enumerate}
\item[\rm(i)] $\mathcal{S}_{a,r}$ is smooth and projectively $j$-invariant;
\item[\rm(ii)] it is represented by the symmetric matrix
\begin{equation}\label{eq:Qa-r-matrix}
Q_{a,r}=\begin{pmatrix}
0&1&0&-a\\1&0&-a&0\\0&-a&0&a^2-r^2\\-a&0&a^2-r^2&0
\end{pmatrix},
\end{equation}
with $S^T\overline{Q_{a,r}}S=-Q_{a,r}$ and $\det Q_{a,r}=r^4$;
\item[\rm(iii)] the discriminant circle, in the affine chart
$S^4\setminus\{\infty\}\simeq\HH$ with $q=p_0+jp_1$, is
\begin{equation}\label{eq:Da-r-discriminant}
D_{a,r}=\bigl\{p_1=0,\quad 1-2a\,\Re(p_0)+(a^2-r^2)|p_0|^2=0\bigr\}.
\end{equation}
\end{enumerate}
This family realizes all four cases of Corollary~\ref{cor:branch-locus-types-s3}:
\begin{enumerate}
\item[\rm(a)] $(a,r)=(0,1)$: $D_{0,1}=\{p_1=0,|p_0|=1\}=S^1\subset\Sigma$
\emph{(contained)};
\item[\rm(b)] $(a,r)=(1,1)$: $D_{1,1}=\{p_1=0,\Re(p_0)=\tfrac{1}{2}\}\cup\{\infty\}$,
meeting $\Sigma$ in two points, $I=\tfrac{1}{2}<1$ \emph{(transverse)};
\item[\rm(c)] $(a,r)=(2,1)$: $D_{2,1}=\{p_1=0,|p_0-\tfrac{2}{3}|=\tfrac{1}{3}\}$,
tangent to $S^1$ at $p_0=1$, $I=1$ \emph{(tangent)};
\item[\rm(d)] $(a,r)=(0,\tfrac{1}{2})$: $D_{0,1/2}=\{p_1=0,|p_0|=2\}$,
disjoint from $S^1$, $I=\tfrac{5}{4}>1$ \emph{(disjoint)}.
\end{enumerate}
\end{proposition}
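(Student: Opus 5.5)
The plan is to exploit a tensor-product structure of $Q_{a,r}$, which makes (i) and (ii) immediate and reduces (iii) to a $2\times 2$ computation on each twistor fibre.

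\textbf{Tensor structure, determinant and $j$-invariance.} Set
\[
K=\begin{pmatrix}1&-a\\-a&a^2-r^2\end{pmatrix},\qquad
M=\begin{pmatrix}0&1\\1&0\end{pmatrix}.
\]
In the block decomposition $z=(x,y)$ with $x=(z_0,z_1)$ and $y=(z_2,z_3)$, as in Section~\ref{sec:curves}, one has $Q_{a,r}=K\otimes M$. Expanding $z^TQ_{a,r}z$ gives twice the defining polynomial of $\mathcal S_{a,r}$, so $Q_{a,r}$ represents the quadric. The determinant is
\[
\det Q_{a,r}=(\det K)^2(\det M)^2=(-r^2)^2\cdot 1=r^4\neq 0,
\]
which gives smoothness. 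Since $S=I_2\otimes J_0$ and $K$ is real,
\[
S^T\overline{Q_{a,r}}S=K\otimes (J_0^TMJ_0).
\]
A direct $2\times2$ check gives $J_0^TMJ_0=-M$, hence $S^T\overline{Q_{a,r}}S=-Q_{a,r}$. This is condition \eqref{eq:jinv-up-to-scale-smooth} with $\lambda=-1$, so $\mathcal S_{a,r}$ is $j$-invariant, and $iQ_{a,r}$ satisfies the normalization \eqref{eq:jinv-normalized-smooth}.

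\textbf{Discriminant circle.} By the Salamon--Viaclovsky result recalled above, $D(\mathcal S)=D_0(\mathcal S)$, so it suffices to find the fibres contained in $\mathcal S_{a,r}$.

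1. In the affine chart $\HH\subset S^4$, the fibre over $q=p_0+jp_1$ is $\{(x,Px)\mid x\in\C^2\}$. Here $P$ is the quaternionic $2\times2$ complex matrix representing $q$, of the normal form $\bigl(\begin{smallmatrix}\alpha&\beta'\\ \gamma&\delta\end{smallmatrix}\bigr)$ built from $p_0,p_1$ as in Theorem~\ref{thm:fibres-vs-lines}. The left/right convention must be fixed consistently with \eqref{eq:Hopf-background}.

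2. Restricting the quadric to this fibre gives the quadratic form in $x$ with matrix
\[
N(P)=M-a\,(MP+P^TM)+(a^2-r^2)\,P^TMP .
\]
The fibre lies in $\mathcal S_{a,r}$ if and only if $N(P)=0$.

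3. Compute $P^TMP$ and $MP+P^TM$ entrywise. The diagonal entries of $N(P)$ should force the $p_1$-component of $P$ to vanish, since $r>0$ excludes degenerate cancellations.

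4. Once $p_1=0$, $P=\diag(p_0,\overline{p_0})$, and then $MP+P^TM=2\Re(p_0)M$ and $P^TMP=|p_0|^2M$. Therefore $N(P)=0$ reduces to
\[
1-2a\Re(p_0)+(a^2-r^2)|p_0|^2=0,
\]
which is \eqref{eq:Da-r-discriminant}.

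5. Check the point $\infty$ separately, using the fibre $\{(0,y)\}$, on which the restricted form is $(a^2-r^2)M$. Thus $\infty\in D_{a,r}$ exactly when $a^2=r^2$, consistent with the generalized-circle reading.

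\textbf{The four examples.} Substitute each $(a,r)$ into \eqref{eq:Da-r-discriminant}.

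1. $(0,1)$: the equation becomes $|p_0|=1$, the unit circle, so $D\subset\Sigma$.

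2. $(1,1)$: the equation becomes $1-2\Re(p_0)=0$, a line through $\infty$.

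3. $(2,1)$: the equation is $3|p_0|^2-4\Re(p_0)+1=0$, i.e.\ $|p_0-\tfrac23|=\tfrac13$.

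4. $(0,\tfrac12)$: the equation becomes $|p_0|=2$.

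For the value of $I$, use formula \eqref{eq:inversive-distance} with the centre/radius data of each circle, and the orthogonality criterion in the line case. In case (b) this means computing the intersection angle of the line $\Re p_0=\tfrac12$ with $S^1$ directly (intersection points $\tfrac12\pm\tfrac{\sqrt3}{2}i$) and reading off $|\cos\theta|$. The positions relative to $S^1$ then follow from \eqref{eq:I-cases}.

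\textbf{Main obstacle.} The delicate point is step 3 of the discriminant computation: showing that $N(P)=0$ forces $p_1=0$. This requires pinning down the exact identification of the fibre over $q$ with the graph of $P$, including the left/right quaternion convention. I would settle this first by checking the identification against the point $q=0$, whose fibre is $\{y=0\}$, and against $\infty$, whose fibre is $\{x=0\}$.
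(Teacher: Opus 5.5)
Your plan is correct in outline and reaches the paper's conclusions, but by a somewhat different route.

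\textbf{Comparison with the paper.} For (i)--(ii) you use the factorizations $Q_{a,r}=K\otimes M$ and $S=I_2\otimes J_0$. These give $\det Q_{a,r}=r^4$ and $S^T\overline{Q_{a,r}}S=K\otimes J_0^TMJ_0=-Q_{a,r}$ structurally; the paper only asserts a direct computation.

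For (iii) you import $D(\mathcal S)=D_0(\mathcal S)$ from Salamon--Viaclovsky and solve $N(P)=0$. The paper instead writes the restricted binary form $\alpha z_0^2+2\beta z_0z_1+\gamma z_1^2$ and shows $\beta^2-\alpha\gamma=\beta^2+|p_1|^2|(a^2-r^2)p_0-a|^2$ with $\beta$ real. It thereby proves $D=D_0$ for this family rather than citing it. Your matrix gives the same thing at no cost. Indeed $N(P)=2\bigl(\begin{smallmatrix}\alpha&\beta\\ \beta&\gamma\end{smallmatrix}\bigr)$ and $N_{11}N_{22}=-4|p_1|^2|(a^2-r^2)p_0-a|^2\le 0$. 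Hence $\det N(P)\le 0$, with equality iff $N(P)=0$; adding this one line makes your argument self-contained.

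Your separate check of the fibre $\{(0,y)\}$ over $\infty$ makes explicit a point the paper leaves implicit: it only remarks that the line $\Re z=\tfrac{1}{2a}$ compactifies through $\infty$.

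For the inversive distances, the paper uses one uniform device. The map $z=1/w$ preserves $S^1$ and sends $D_{a,r}$ to $\{|w-a|=r\}$, so $I=|a^2-r^2-1|/(2r)$ for the whole family, line case included. Your case-by-case values ($\tfrac12$ from the angle at $\tfrac12\pm\tfrac{\sqrt3}{2}i$, then $1$ and $\tfrac54$) are correct but less economical.

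\textbf{One step needs repair.} First, fix the fibre parametrization. Your proposed checks at $q=0$ and $q=\infty$ cannot fix the convention, since every graph parametrization with $P(0)=0$ passes them. Derive it instead from $q_1=q\,q_0$, using $jz=\bar z j$:
\[
(p_0+jp_1)(z_0+jz_1)=(p_0z_0-\bar p_1z_1)+j(p_1z_0+\bar p_0z_1).
\]
So the fibre over $q$ is the graph of $P=\bigl(\begin{smallmatrix}p_0&-\bar p_1\\ p_1&\bar p_0\end{smallmatrix}\bigr)$.

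With this $P$, the diagonal entries are $N_{11}=2p_1\bigl((a^2-r^2)p_0-a\bigr)$ and $N_{22}=-2\bar p_1\bigl((a^2-r^2)\bar p_0-a\bigr)$. Contrary to your step 3, they do not force $p_1=0$ by themselves. They also vanish on the branch $(a^2-r^2)p_0=a$ with $p_1$ arbitrary.

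That branch must be excluded using the off-diagonal entry $N_{12}=1-2a\Re p_0+(a^2-r^2)(|p_0|^2-|p_1|^2)$. Write $c=a^2-r^2$.
\begin{itemize}
\item If $c\neq 0$, substituting $p_0=a/c$ gives $N_{12}=-(r^2+c^2|p_1|^2)/c\neq 0$.
\item If $c=0$, the branch forces $a=0$, contradicting $a^2=r^2>0$.
\end{itemize}
This is exactly where $r>0$ is used, and it is the same exclusion the paper carries out.
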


\begin{proof}
Let
\[
F_{a,r}(z)=z_0z_1-a(z_0z_3+z_1z_2)+(a^2-r^2)z_2z_3
\]
and put \(c=a^2-r^2\). With the convention that the quadratic
form is represented by \(z^TQ_{a,r}z\), the matrix
\eqref{eq:Qa-r-matrix} represents \(2F_{a,r}\). Hence it defines the
same quadric. A direct computation gives
\[
\det Q_{a,r}=r^4.
\]
Since \(r>0\), the matrix has full rank, and therefore
\(\mathcal S_{a,r}\) is a smooth quadric in \(\CP^3\).

For \(j\)-invariance, recall that
$S=\left(\begin{smallmatrix}0&-1&0&0\\1&0&0&0\\0&0&0&-1\\0&0&1&0
\end{smallmatrix}\right).$
Since \(Q_{a,r}\) is real, a direct computation gives
\[
S^T\overline{Q_{a,r}}S=S^TQ_{a,r}S=-Q_{a,r}.
\]
Equivalently,
\[
F_{a,r}(S\overline z)=-\overline{F_{a,r}(z)}.
\]
Thus the zero locus of \(F_{a,r}\) is preserved by the projective
antiholomorphic involution induced by \(S\). This proves projective
\(j\)-invariance.

We now compute the discriminant locus. Write the twistor fibre over
\(q=p_0+jp_1\in\HH\) as
\[
\begin{cases}z_2=z_0p_0-z_1\overline{p}_1,\\
z_3=z_0p_1+z_1\overline{p}_0.
\end{cases}
\]
Substituting into \(F_{a,r}\) gives a binary quadratic form
\[
F_{a,r}|_{\ell_p}
=
\alpha(p)z_0^2+2\beta(p)z_0z_1+\gamma(p)z_1^2,
\]
where
\[
\alpha(p)=\bigl((a^2-r^2)p_0-a\bigr)p_1,\qquad
\gamma(p)=\bigl(a-(a^2-r^2)\overline p_0\bigr)\overline p_1,
\]
and
\[
2\beta(p)
=
1-a(p_0+\overline p_0)
+(a^2-r^2)\bigl(|p_0|^2-|p_1|^2\bigr).
\]
The discriminant is
\[
\Delta(p)=\beta(p)^2-\alpha(p)\gamma(p).
\]
Since \(a,r\in\R\), we obtain
\[
\Delta(p)
=
\frac14
\left(
1-2a\,\Re(p_0)
+(a^2-r^2)(|p_0|^2-|p_1|^2)
\right)^2
+
|p_1|^2
\left|
(a^2-r^2)p_0-a
\right|^2.
\]
This is a sum of non-negative real terms.

Hence \(\Delta(p)=0\) implies
\[
1-2a\,\Re(p_0)
+(a^2-r^2)(|p_0|^2-|p_1|^2)=0
\]
and either \(p_1=0\) or \((a^2-r^2)p_0=a\). The second possibility is
incompatible with the first equation when \(p_1\neq 0\): indeed, if
\(c=a^2-r^2\neq0\) and \(cp_0=a\), then 
by substituting \(p_0=a/c\in\R\)
in the first equation we get
\[
\Delta(p)=1-2\frac{a^2}{c}+c\left(\frac{a^2}{c^2}-|p_1|^2 \right)= 1-\frac{a^2}{c}-c|p_1|^2=-\frac{r^2}{c}-c|p_1|^2=-\frac{r^2+c^2|p_1|^2}{c}=0,
\]
which is impossible, since the two summands have the same sign. If
\(c=0\), then \((a^2-r^2)p_0=a\) would imply \(a=0\), impossible because
\(r>0\) and \(a^2=r^2\).

Therefore the discriminant locus is precisely given by \(p_1=0\) and
\[
1-2a\,\Re(p_0)+(a^2-r^2)|p_0|^2=0.
\]
This proves \eqref{eq:Da-r-discriminant}.

It remains to identify the four cases. In the affine coordinate
\(z=p_0\) on the slice \(\{p_1=0\}\simeq\C\), the equation of
\(D_{a,r}\) is
\(1-2a\,\Re z+(a^2-r^2)|z|^2=0\).
Put \(c=a^2-r^2\). If \(c\neq 0\), dividing by \(c\) and completing the square gives
\[
|z|^2-\frac{2a}{c}\Re z+\frac{1}{c}=0,
\]
hence
\[
\left|z-\frac{a}{c}\right|^2
=
\frac{a^2}{c^2}-\frac{1}{c}
=
\frac{a^2-c}{c^2}
=
\frac{r^2}{c^2}.
\]
Therefore
\[
D_{a,r}
=
\left\{
\left|z-\frac{a}{a^2-r^2}\right|
=
\frac{r}{|a^2-r^2|}
\right\}.
\]
If \(a^2\neq r^2\), this is the circle
\[
\left|z-\frac{a}{a^2-r^2}\right|
=
\frac{r}{|a^2-r^2|}.
\]
If \(a^2=r^2\), it is the affine line
\[
\Re z=\frac{1}{2a},
\]
whose compactification in \(\widehat{\C}\) is a circle through
\(\infty\).

For \((a,r)=(0,1)\), this gives
\[
D_{0,1}=\{|z|=1\}=S^1\subset\Sigma.
\]
For \((a,r)=(1,1)\), we get
\[
D_{1,1}=\{\Re z=\tfrac12\}\cup\{\infty\},
\]
which meets \(S^1\) transversely in two points. For \((a,r)=(2,1)\),
we obtain
\[
D_{2,1}=\left\{\left|z-\frac23\right|=\frac13\right\},
\]
which is tangent to \(S^1\) at \(z=1\). Finally, for
\((a,r)=(0,\tfrac12)\), we obtain
\[
D_{0,1/2}=\{|z|=2\},
\]
which is disjoint from \(S^1\).

For the inversive distances, note that the circle \(D_{a,r}\) is the
image under inversion \(z=1/w\) of the Euclidean circle
\[
C_{a,r}=\{|w-a|=r\}.
\]
Indeed,
\[
\left|\frac1z-a\right|^2=r^2
\]
is equivalent, after multiplying by \(|z|^2\), to
\[
1-2a\,\Re z+(a^2-r^2)|z|^2=0.
\]
Since inversion preserves \(S^1\) and preserves inversive distance, we
may compute \(I\) using \(C_{a,r}\). Thus
\[
I=\frac{|a^2-r^2-1|}{2r}.
\]
Therefore
\[
(a,r)=(1,1)\colon\quad I=\frac{|1-1-1|}{2}=\frac12<1,
\]
\[
(a,r)=(2,1)\colon\quad I=\frac{|4-1-1|}{2}=1,
\]
and
\[
(a,r)=\left(0,\frac12\right)\colon\quad
I=\frac{\left|0-\frac14-1\right|}{1}=\frac54>1.
\]
This gives respectively the transverse, tangent, and disjoint cases.
\end{proof}

A picture of the four behaviours described in Proposition~\ref{prop:explicit-family-smooth-j-invariant-quadrics} is contained in Figure~\ref{fig:four-circle-positions}.

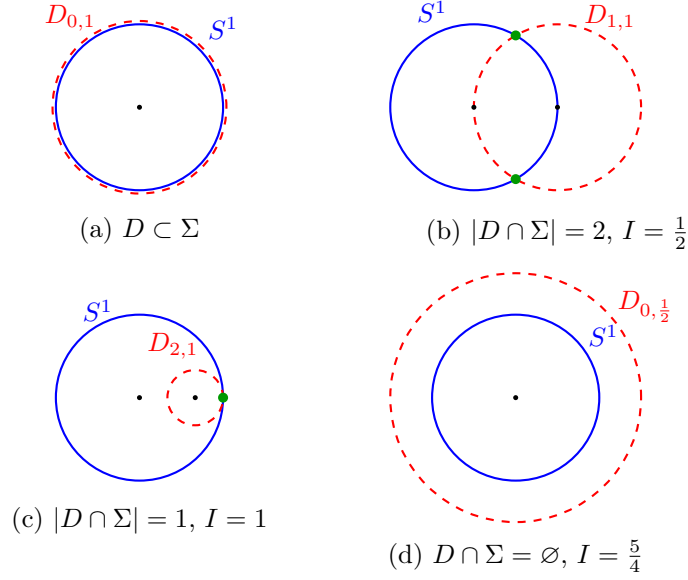
\begin{figure}[h]
\centering
\begin{tikzpicture}[scale=1.1]

\begin{scope}[xshift=0cm, yshift=3.5cm]
  \draw[thick, blue] (0,0) circle (1);
  \draw[thick, red, dashed] (0,0) circle (1.04);
  \fill (0,0) circle (0.03);
  \node[below] at (0,-1.2) {\small (a) $D\subset\Sigma$};
  \node[above right, blue, font=\small] at (0.7,0.7) {$S^1$};
  \node[above left, red, font=\small] at (-0.4,0.8) {$D_{0,1}$};
\end{scope}

\begin{scope}[xshift=4.5cm, yshift=3.5cm]
  \draw[thick, blue] (-0.5,0) circle (1);
  \draw[thick, red, dashed] (0.5,0) circle (1);
  \fill[green!60!black] (0, 0.866) circle (0.06);
  \fill[green!60!black] (0,-0.866) circle (0.06);
  \fill (-0.5,0) circle (0.03);
  \fill (0.5,0) circle (0.03);
  \node[below] at (0.5,-1.2) {\small (b) $|D\cap\Sigma|=2$, $I=\tfrac{1}{2}$};
  \node[above right, blue, font=\small] at (-1.3,0.9) {$S^1$};
  \node[above right, red, font=\small] at (0.7,0.8) {$D_{1,1}$};
\end{scope}

\begin{scope}[xshift=0cm, yshift=0cm]
  \draw[thick, blue] (0,0) circle (1);
  \draw[thick, red, dashed] (0.667,0) circle (0.333);
  \fill[green!60!black] (1,0) circle (0.06);
  \fill (0,0) circle (0.03);
  \fill (0.667,0) circle (0.03);
  \node[below] at (0,-1.2) {\small (c) $|D\cap\Sigma|=1$, $I=1$};
  \node[above right, blue, font=\small] at (-0.8,0.8) {$S^1$};
  \node[above, red, font=\small] at (0.4,0.35) {$D_{2,1}$};
\end{scope}

\begin{scope}[xshift=4.5cm, yshift=0cm]
  \draw[thick, blue] (0,0) circle (1);
  \draw[thick, red, dashed] (0,0) circle (1.5);
  \fill (0,0) circle (0.03);
  \node[below] at (0,-1.6) {\small (d) $D\cap\Sigma=\varnothing$, $I=\tfrac{5}{4}$};
  \node[right, blue, font=\small] at (0.75,0.75) {$S^1$};
  \node[right, red, font=\small] at (1.1,1.1) {$D_{0,\frac{1}{2}}$};
\end{scope}

\end{tikzpicture}
\caption{The four relative positions of the discriminant circle $D_{a,r}$ (red, dashed) with
respect to $\Sigma\cap\widehat{\C}=S^1$ (blue) in the planar model, for the explicit
family of Proposition~\ref{prop:explicit-family-smooth-j-invariant-quadrics}.
Green dots mark the branch points $\Gamma_{\mathcal{S}}=D_{a,r}\cap S^1$.
The inversive distance $I$ of \eqref{eq:inversive-distance} distinguishes the four
cases.}
\label{fig:four-circle-positions}
\end{figure}

Let $M_{\mathcal{S}}:=\mathcal{S}\cap\Qtwotwo$. Over $\Sigma\setminus\Gamma_{\mathcal{S}}$
each twistor fibre meets $\mathcal{S}$ in exactly two distinct points, so $\pi_{|M_{\mathcal{S}}\cap\pi^{-1}(\Sigma\setminus\Gamma_{\mathcal{S}})}$
is a two-sheeted unbranched covering over \(\Sigma\setminus\Gamma_{\mathcal{S}}\).
\begin{remark}
The explicit formulas of Section~\ref{sec:explicit-CR} show that on the smooth
locus of \(M_{\mathcal S}\), the induced \(CR\) bundle is simply
\[
T^{0,1}M_{\mathcal S}=T^{0,1}\Qtwotwo\cap \C TM_{\mathcal S},
\]
and  the Levi form of \(M_{\mathcal S}\) is studied relative to the fixed
split Levi form of \(\Qtwotwo\).
\end{remark}
\begin{proposition}\label{prop:local-branches-cr}
Let $U\subset\Sigma\setminus\Gamma_{\mathcal{S}}$ be simply connected. Then
$M_{\mathcal{S}}\cap\pi^{-1}(U)$ splits into two real-analytic sections
$s_\pm:U\to\mathcal{S}$ with $\pi\circ s_\pm=\Id_U$ and $s_-=j\circ s_+$. Each
image $s_\pm(U)\subset\mathcal{S}$ is a real-analytic hypersurface in the complex
surface $\mathcal{S}$, and induces on $U$ a real-analytic $CR$ structure of
hypersurface type
\[
H^{0,1}_\pm U:=s_\pm^{*}\bigl(T^{0,1}\mathcal{S}\cap\C T(s_\pm(U))\bigr).
\]
The two induced $CR$ structures are conjugate.
\end{proposition}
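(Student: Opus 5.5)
Over $U$ the restricted projection $\pi\colon M_{\mathcal S}\cap\pi^{-1}(U)\to U$ is a two-sheeted unbranched covering (as noted just before the statement). A covering of a simply connected, locally path-connected base is trivial. Hence $M_{\mathcal S}\cap\pi^{-1}(U)=s_+(U)\sqcup s_-(U)$ for two continuous sections $s_\pm$.

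\textbf{Real-analyticity.} I would use the local description of the fibres from the proof of Proposition~\ref{prop:explicit-family-smooth-j-invariant-quadrics}. In a chart $q=p_0+jp_1$ of $S^4$, the fibre $\ell_q$ is parametrized by $[z_0:z_1]$, with $(z_2,z_3)$ real-analytic (indeed real-linear) in $(p_0,p_1)$. Restricting the defining quadric of $\mathcal S$ to $\ell_q$ gives a binary quadratic form
\[
\alpha(q)z_0^2+2\beta(q)z_0z_1+\gamma(q)z_1^2 ,
\]
whose coefficients are real-analytic, indeed polynomial, in $q$. Its discriminant $\Delta(q)$ is nonzero off $D(\mathcal S)$, hence on $U$. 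The two roots $[z_0:z_1]$ are therefore given by the quadratic formula with a branch of $\sqrt{\Delta}$. On simply connected $U$ such a branch exists globally and is real-analytic, because $\Delta$ is nonvanishing and real-analytic. So $s_\pm$ are real-analytic and agree with the covering sections.

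\textbf{The relation $s_-=j\circ s_+$.} Since $\mathcal S$ and $\Qtwotwo$ are $j$-invariant and $j$ preserves each fibre, $j$ maps $M_{\mathcal S}\cap\ell_x$ to itself for each $x\in U$. Because $j$ has no fixed points on $\CP^3$, it must swap the two points. Hence $j\circ s_+=s_-$ pointwise.

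\textbf{Hypersurface property.} Each $s_\pm$ is an injective immersion (since $\pi\circ s_\pm=\Id$) and a homeomorphism onto its image. So $s_\pm(U)$ is a real-analytic embedded $3$-dimensional submanifold of the complex surface $\mathcal S$, that is, a real hypersurface. I expect this step to be the main obstacle: one must check that $\mathcal S$ is smooth along $M_{\mathcal S}$ (automatic, since $\mathcal S$ is a smooth quadric) and that $s_\pm(U)$ really lies in $\mathcal S$ with real codimension one. The dimension count settles this, since $\dim_{\R}\mathcal S=4$ and $\dim s_\pm(U)=3$.

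\textbf{Induced CR structures.} A real hypersurface in a complex surface carries the induced CR structure $T^{0,1}\mathcal S\cap\C T(s_\pm(U))$, which has rank one and is real-analytic because both the hypersurface and the complex structure are. Pulling back by the real-analytic diffeomorphism $s_\pm\colon U\to s_\pm(U)$ gives $H^{0,1}_\pm U$, of hypersurface type on the $3$-manifold $U$.

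\textbf{Conjugacy.} The map $j$ is anti-holomorphic and preserves $\mathcal S$, so $dj$ maps $T^{0,1}\mathcal S$ to $T^{1,0}\mathcal S$. It also maps $s_+(U)$ diffeomorphically onto $s_-(U)$. Therefore
\[
dj\bigl(T^{0,1}\mathcal S\cap\C Ts_+(U)\bigr)=\overline{T^{0,1}\mathcal S\cap\C Ts_-(U)} .
\]
Pulling back via $s_-=j\circ s_+$ gives $H^{0,1}_-U=\overline{H^{0,1}_+U}$.
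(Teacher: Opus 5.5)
Your proposal is correct and follows the paper's proof step for step. You trivialize the two-sheeted covering over simply connected $U$, let the fixed-point-free $j$ swap the two points of each fibre, use $\pi\circ s_\pm=\Id_U$ to obtain embedded real hypersurfaces of $\mathcal S$, and deduce conjugacy from the anti-holomorphicity of $j$. The only difference is the real-analyticity step. There you take a branch of $\sqrt{\Delta}$ in the roots of the restricted binary quadratic; this is best written homogeneously, e.g.\ $[-\beta+\sqrt{\Delta}:\alpha]$ or $[\gamma:-\beta-\sqrt{\Delta}]$, since $\alpha$ can vanish on $U$. That argument makes explicit the simple-root (transversality) fact that the paper leaves implicit when it appeals to real-analytic local inverse branches of $\pi|_{M_{\mathcal S}}$.
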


\begin{proof}
By the preceding discussion, the restriction
\[
\pi:M_{\mathcal{S}}\cap\pi^{-1}(U)\longrightarrow U
\]
is a two-sheeted unbranched covering. Since $U$ is simply connected, this covering
is trivial. Hence there exist two global sections
\[
s_\pm:U\longrightarrow M_{\mathcal{S}}\cap\pi^{-1}(U)\subset\mathcal{S},
\qquad
\pi\circ s_\pm=\Id_U,
\]
unique up to permutation. Because $\pi$ and $M_{\mathcal{S}}$ are real-analytic,
the local inverse branches are real-analytic, and therefore so are $s_\pm$.

Since $\mathcal{S}$ is $j$-invariant and $j$ preserves each twistor fibre, $j$
maps the two-point set $\mathcal{S}\cap\pi^{-1}(x)$ to itself for every $x\in U$.
As $j$ has no fixed points on a twistor fibre, it exchanges the two points. After
relabeling if necessary, one obtains
\[
s_-=j\circ s_+.
\]

Each $s_\pm$ is an immersion, because $\pi\circ s_\pm=\Id_U$. Thus $s_\pm(U)$ is
a real-analytic embedded submanifold of $\mathcal{S}$ of real dimension $3$.
Since $\dim_\C\mathcal{S}=2$, this is a real hypersurface in the complex surface
$\mathcal{S}$. It therefore carries its standard induced hypersurface $CR$
structure, and pulling it back by $s_\pm$ gives the bundles $H^{0,1}_\pm U$ above.

Finally, $j$ is anti-holomorphic on $\CP^3$, hence also on $\mathcal{S}$, so its
differential interchanges $T^{1,0}\mathcal{S}$ and $T^{0,1}\mathcal{S}$.
Therefore the two induced $CR$ structures are conjugate.
\end{proof}

\begin{remark} The Levi-nondegeneracy cannot be asserted in the generality of
Proposition~\ref{prop:local-branches-cr}. The Levi-nondegeneracy of the induced
hypersurface \(CR\) structure is not automatic, i.e. transversality to the twistor
fibres does not exclude the 
degeneracy of the Levi form.

Indeed, consider the explicit quadric $\mathcal{S}_{2,\sqrt3}$, for which
$a^2-r^2=1$. In the affine chart $U_0=\{z_0\neq0\}$, with coordinates
$u_j=z_j/z_0$, its equation is
\[
u_1(1-2u_2)=u_3(2-u_2).
\]
Near the point
\[
p=[1:1:-1:1]\in M_{\mathcal{S}_{2,\sqrt3}},
\]
which lies over the base point $q=j\in\Sigma\setminus\Gamma_{\mathcal{S}_{2,\sqrt3}}$,
one may solve for $u_1$ as
\[
u_1=u_3\,\frac{2-u_2}{1-2u_2}.
\]
Hence $M_{\mathcal{S}_{2,\sqrt3}}\subset\mathcal{S}_{2,\sqrt3}$ is locally defined by
\[
\rho(u_2,u_3)
=
1+\left|\frac{2-u_2}{1-2u_2}\right|^2|u_3|^2-|u_2|^2-|u_3|^2=0.
\]
At the point $(u_2,u_3)=(-1,1)$ one has
\[
\rho=0,\qquad
\frac{\partial\rho}{\partial u_2}=\frac{4}{3},\qquad
\frac{\partial\rho}{\partial u_3}=0.
\]
Therefore $T^{1,0}_pM_{\mathcal{S}_{2,\sqrt3}}$ is spanned by
$\partial/\partial u_3$, and the Levi form vanishes on this nonzero complex tangent
vector, because
\[
\frac{\partial^2\rho}{\partial u_3\,\partial\overline{u}_3}(p)
=
\left(\left|\frac{2-u_2}{1-2u_2}\right|^2-1\right)\Bigg|_{u_2=-1}
=0.
\]
So the induced $CR$ structure is Levi-degenerate at $p$. 
\end{remark}

\begin{theorem}

Let \(\mathcal S\subset\CP^3\) be a smooth \(j\)-invariant quadric, and set
\[
M_{\mathcal S}:=\mathcal S\cap Q^{2,2}.
\]
Let \(p\in M_{\mathcal S}\) be a point at which the intersection is transverse.
Then \(M_{\mathcal S}\) is a smooth \(CR\) hypersurface in the complex surface
\(\mathcal S\), and its \(CR\) structure is given by
\[
T^{1,0}_pM_{\mathcal S}
=
T^{1,0}_p\mathcal S\cap T^{1,0}_pQ^{2,2}.
\]
Moreover, up to the usual sign convention, the Levi form of \(M_{\mathcal S}\) at
\(p\) is the restriction of the ambient Levi form of \(Q^{2,2}\) to this complex
distribution. In particular, \(M_{\mathcal S}\) is Levi-nondegenerate at \(p\) if and only
if the line \(T^{1,0}_pM_{\mathcal S}\) is non-isotropic with respect to the 
Levi form of \(Q^{2,2}\).
\end{theorem}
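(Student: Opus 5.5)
The plan is to work locally at $p$ with a defining function for $\Qtwotwo$ and to restrict it to the complex surface $\mathcal S$. Near $p$, choose an affine chart, say $U_0$, in which $\Qtwotwo=\{\rho_0=0\}$ with $\rho_0=1+|u_1|^2-|u_2|^2-|u_3|^2$. The quadric $\mathcal S$ is a smooth complex surface. Transversality of $\mathcal S$ and $\Qtwotwo$ at $p$ means $T_p\mathcal S\not\subset T_p\Qtwotwo$. Since $T_p\mathcal S$ is a complex subspace, this is equivalent to $\partial\rho_0|_{T^{1,0}_p\mathcal S}\neq 0$, and hence to $d(\rho_0|_{\mathcal S})(p)\neq 0$. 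The implicit function theorem then shows that $M_{\mathcal S}=\{\rho:=\rho_0|_{\mathcal S}=0\}$ is, near $p$, a smooth real hypersurface of $\mathcal S$, with defining function $\rho$.

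For the $CR$ structure, I will use the standard description $T^{1,0}_pM_{\mathcal S}=\C T_pM_{\mathcal S}\cap T^{1,0}_p\mathcal S$. A vector $L\in T^{1,0}_p\mathcal S\subset T^{1,0}_p\CP^3$ is tangent to $M_{\mathcal S}$ if and only if $L\rho=\partial\rho_0(L)=0$, that is, if and only if $L\in T^{1,0}_p\Qtwotwo$. This gives $T^{1,0}_pM_{\mathcal S}=T^{1,0}_p\mathcal S\cap T^{1,0}_p\Qtwotwo$. By transversality this space is the kernel of a nonzero functional on the $2$-dimensional space $T^{1,0}_p\mathcal S$, so it is a complex line. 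This matches the remark preceding Proposition~\ref{prop:local-branches-cr}.

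For the Levi form, take the contact form $\theta=\tfrac{i}{2}(\partial\rho-\bar\partial\rho)$ on $M_{\mathcal S}$. Because the inclusion $\iota:\mathcal S\hookrightarrow\CP^3$ is holomorphic, pullback commutes with $\partial$ and $\bar\partial$. Hence $\theta=\iota^*\theta_0$ and $d\theta=\iota^*d\theta_0=\iota^*(i\partial\bar\partial\rho_0)$. For $L\in T^{1,0}_pM_{\mathcal S}$ this gives
\[
\mathcal L_{M_{\mathcal S}}(L,\overline L)=-\tfrac{i}{2}d\theta(L,\overline L)=\mathcal L_{\Qtwotwo}(L,\overline L),
\]
which by \eqref{eq:Levi-U0-general} equals the restriction to $L$ of the ambient Hermitian form with coefficients $(1,-1,-1)$. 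The Levi form therefore agrees with that of $\Qtwotwo$ on the line, up to the normalisation of the contact form, which changes it only by a nonzero real factor; this is the sign convention referred to in the statement. The rank-one Hermitian form on the line $T^{1,0}_pM_{\mathcal S}$ is nondegenerate exactly when $\mathcal L_{\Qtwotwo}(L,\overline L)\neq0$ for a generator $L$, which is the non-isotropy condition. As a consistency check, in the example of the preceding remark this quantity is $\partial^2\rho/\partial u_3\partial\overline u_3=0$.

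The delicate point is independence of choices: the chart and the defining function. Rescaling $\rho_0$ by a positive function $e^{f}$ changes $d\theta$ on the complex tangent space only by the factor $e^{f}$, since the extra terms involve $\rho$ or $\partial\rho(L)$, and both vanish there. Moving to another chart multiplies $\rho_0$ by $|z_0/z_k|^2>0$, which is a rescaling of the same kind. So the identification holds globally up to a positive factor, and the nondegeneracy criterion is intrinsic.
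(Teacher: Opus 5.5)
Your proposal is correct and is essentially the paper's proof: restrict a defining function of $\Qtwotwo$ to $\mathcal S$, use transversality to make it a defining function of $M_{\mathcal S}$, and use holomorphicity of $\iota:\mathcal S\hookrightarrow\CP^3$ so that $\partial$ and $\bar\partial$, and hence $T^{1,0}$ and the Levi form, commute with restriction; your checks on the transversality criterion and on independence of chart and defining function add useful detail. One cosmetic point: a direct computation gives $d\theta_0=-i\,\partial\bar\partial\rho_0$, so the sign in your $\iota^*(i\,\partial\bar\partial\rho_0)$, inherited from the paper's displayed $d\theta_0$, is off, which is harmless under the ``up to sign'' clause.
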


\begin{proof}
Choose a local defining function \(\rho\) for \(Q^{2,2}\) near \(p\), so that
\(Q^{2,2}=\{\rho=0\}\) and \(d\rho\neq0\) along \(Q^{2,2}\). Since \(\mathcal S\)
is a complex surface and the intersection is transverse, the restriction
\[
\rho_{\mathcal S}:=\rho|_{\mathcal S}
\]
is a defining function for \(M_{\mathcal S}\) as a real hypersurface in
\(\mathcal S\). Hence
\[
T^{1,0}_pM_{\mathcal S}
=
\ker(\partial\rho_{\mathcal S})_p
\subset T^{1,0}_p\mathcal S.
\]
Because \(\rho_{\mathcal S}\) is the restriction of \(\rho\), this is exactly
\[
T^{1,0}_p\mathcal S\cap T^{1,0}_pQ^{2,2}.
\]

Now let \(v\in T^{1,0}_pM_{\mathcal S}\). Up to sign, the Levi form of the
hypersurface \(M_{\mathcal S}\subset\mathcal S\) is given by
\[
\mathcal L^{M_{\mathcal S}}_p(v,\bar v)
=
i\,\partial\bar\partial(\rho_{\mathcal S})_p(v,\bar v).
\]
Since \(\mathcal S\) is a complex submanifold, restricting \(\rho\) to
\(\mathcal S\) does not change the \((1,1)\)-form \(i\,\partial\bar\partial\rho\)
on vectors tangent to \(\mathcal S\). Therefore
\[
\mathcal L^{M_{\mathcal S}}_p(v,\bar v)
=
\mathcal L^{Q^{2,2}}_p(v,\bar v)
\]
up to sign, where on the right-hand side \(v\) is viewed as an element of
\(T^{1,0}_pQ^{2,2}\). This proves the claim.
\end{proof}

\begin{corollary}
In the setting of Proposition~\ref{prop:local-branches-cr}, let
\(s_\pm:U\to\mathcal S\) be one of the local branches. Then the induced
\(CR\) structure on \(U\) is Levi-nondegenerate at \(x\in U\) if and only if the
complex line
\[
T^{1,0}_{s_\pm(x)}\mathcal S\cap T^{1,0}_{s_\pm(x)}Q^{2,2}
\]
is non-isotropic with respect to the  Levi form of \(Q^{2,2}\).
\end{corollary}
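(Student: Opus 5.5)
The plan is to transport the pointwise criterion of the preceding theorem to the base $U$ along the branch $s_\pm$, using that $s_\pm$ is a real-analytic diffeomorphism onto its image $s_\pm(U)$. By Proposition~\ref{prop:local-branches-cr}, $s_\pm$ is an embedding with left inverse $\pi$. The induced $CR$ structure $H^{0,1}_\pm U$ is by definition the pull-back of the hypersurface structure of $s_\pm(U)\subset\mathcal S$. Levi-nondegeneracy is invariant under $CR$ diffeomorphisms, since the Levi form transforms tensorially under pull-back: $d(s_\pm^*\theta)=s_\pm^*d\theta$ and $s_\pm^*$ maps $T^{1,0}$ to $T^{1,0}$. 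So the structure on $U$ is Levi-nondegenerate at $x$ if and only if $s_\pm(U)$ is Levi-nondegenerate at $s_\pm(x)$.

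Next we need $s_\pm(U)$ to coincide locally with $M_{\mathcal S}$, with $M_{\mathcal S}$ transverse there, so that the theorem applies. Over $U\subset\Sigma\setminus\Gamma_{\mathcal S}$ the map $\pi:M_{\mathcal S}\cap\pi^{-1}(U)\to U$ is a two-sheeted unbranched covering, and $M_{\mathcal S}\cap\pi^{-1}(U)$ is the disjoint union $s_+(U)\sqcup s_-(U)$. Hence near $s_\pm(x)$ the set $M_{\mathcal S}$ equals the smooth $3$-manifold $s_\pm(U)$.

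For transversality, the fibre $\ell$ through $p=s_\pm(x)$ lies in $Q^{2,2}$ and meets $\mathcal S$ simply at $p$, because $x\notin D(\mathcal S)$. Therefore $T_p\ell\not\subset T_p\mathcal S$. Since $T_p\ell\subset T_pQ^{2,2}$, we get $T_pQ^{2,2}\not\subset T_p\mathcal S$. As $T_pQ^{2,2}$ has real codimension one, it follows that $T_p\mathcal S+T_pQ^{2,2}=T_p\CP^3$, which is the transversality required.

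Applying the theorem at $p$, nondegeneracy of $s_\pm(U)$ at $p$ is equivalent to the line $T^{1,0}_p\mathcal S\cap T^{1,0}_pQ^{2,2}$ being non-isotropic for the Levi form of $Q^{2,2}$. Composing this with the first step gives the corollary. The only delicate point is the transversality argument above. There the simplicity of the intersection point (no tangency along the fibre) is what rules out $T_pQ^{2,2}=T_p\mathcal S\oplus\text{(something inside)}$ degenerating.
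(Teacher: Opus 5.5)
Your plan is the right one, and it is how the corollary follows in the paper, which states it without separate proof as an immediate consequence of the preceding theorem. You pull the $CR$ structure back along $s_\pm$, identify $s_\pm(U)$ locally with $M_{\mathcal S}$, and apply that theorem at $p=s_\pm(x)$. However, the step you single out as delicate, transversality, does not work as written. From $T_p\ell\subset T_pQ^{2,2}$ and $T_p\ell\not\subset T_p\mathcal S$ you conclude $T_pQ^{2,2}\not\subset T_p\mathcal S$. That is automatic by dimension: a real $5$-dimensional subspace cannot sit inside a real $4$-dimensional one, so the statement carries no information. Since $T_pQ^{2,2}$ is a real hyperplane, $T_p\mathcal S+T_pQ^{2,2}=T_p\CP^3$ is equivalent to the \emph{opposite} non-inclusion $T_p\mathcal S\not\subset T_pQ^{2,2}$. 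Your argument never establishes this, and the closing sentence about ``$T_pQ^{2,2}=T_p\mathcal S\oplus(\text{something inside})$'' does not supply it either.

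The missing idea is to use that $T_p\mathcal S$ and $T_p\ell$ are both \emph{complex} subspaces. Suppose $T_p\mathcal S\subset T_pQ^{2,2}$. Being $J$-stable, $T_p\mathcal S$ then lies in $H_p:=T_pQ^{2,2}\cap J\,T_pQ^{2,2}$, the real $4$-dimensional complex tangent space of $Q^{2,2}$ at $p$. By dimension this forces $T_p\mathcal S=H_p$. On the other hand, the twistor fibre $\ell$ over $x\in\Sigma$ is a complex curve contained in $Q^{2,2}$, so $T_p\ell\subset H_p=T_p\mathcal S$. Then the restriction of the quadratic equation of $\mathcal S$ to $\ell$ vanishes to order at least two at $p$. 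That means $x\in D(\mathcal S)$, contradicting $x\in U\subset\Sigma\setminus\Gamma_{\mathcal S}$.

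With this replacement the remaining steps go through. Transversality makes $M_{\mathcal S}$ a smooth hypersurface of $\mathcal S$ near $p$, coinciding there with $s_\pm(U)$. The theorem identifies its Levi form at $p$ with the restriction of the Levi form of $Q^{2,2}$ to the line $T^{1,0}_p\mathcal S\cap T^{1,0}_pQ^{2,2}$. Finally, pulling back by the $CR$ diffeomorphism $s_\pm$ transfers (non)degeneracy to $x$.
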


\begin{corollary}\label{cor:global-cr-cases}
The branch locus \(\Gamma_{\mathcal S}\) determines the global domain of the
induced \(CR\) geometry as follows:
\begin{enumerate}
\item[\rm(i)] If \(\Gamma_{\mathcal S}=\varnothing\), then, since
\(\Sigma\simeq S^3\) is simply connected, the two-sheeted covering splits
globally, and one obtains two globally defined conjugate real-analytic
\(CR\) structures on \(S^3\).

\item[\rm(ii)] If \(\Gamma_{\mathcal S}=\{p\}\), then one obtains two globally
defined conjugate real-analytic \(CR\) structures on
\[
\Sigma\setminus\{p\}\cong\R^3.
\]

\item[\rm(iii)] If \(\Gamma_{\mathcal S}=\{p,q\}\), then one obtains two globally
defined conjugate real-analytic \(CR\) structures on
\[
\Sigma\setminus\{p,q\}\cong \R^3\setminus\{0\}\cong S^2\times\R
\cong \CP^1\times(0,+\infty).
\]

\item[\rm(iv)] If \(\Gamma_{\mathcal S}\) is a circle, then the induced
\(CR\) structures are defined locally on the two branches of the covering over
\(\Sigma\setminus\Gamma_{\mathcal S}\), but there is no canonical global
splitting.
\end{enumerate}
\end{corollary}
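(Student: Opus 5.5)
The plan is to derive each of the four cases from Proposition~\ref{prop:local-branches-cr}, upgrading its local splitting to a global one through covering-space theory on the open set $\Sigma\setminus\Gamma_{\mathcal S}$. The common first step fixes the setup. Over $\Sigma\setminus\Gamma_{\mathcal S}$, the restriction
\[
\pi\colon M_{\mathcal S}\cap\pi^{-1}(\Sigma\setminus\Gamma_{\mathcal S})\to\Sigma\setminus\Gamma_{\mathcal S}
\]
is a two-sheeted unbranched covering, by the discussion preceding Proposition~\ref{prop:local-branches-cr}. By the classification of coverings, such a covering is trivial whenever its base is simply connected. When the covering is trivial, the two global sections $s_\pm$ are real-analytic and satisfy $s_-=j\circ s_+$. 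The argument of Proposition~\ref{prop:local-branches-cr} carries over verbatim with $U$ replaced by the whole base: the immersion property, the induced hypersurface $CR$ structure in $\mathcal S$, and conjugacy via the anti-holomorphic map $j$ are all pointwise or local statements.

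\textbf{Cases (i)--(iii).} It therefore suffices to identify the base and check simple connectivity.
\begin{itemize}
\item[(i)] $\Gamma_{\mathcal S}=\varnothing$, so the base is $\Sigma\simeq S^3$, which is simply connected.
\item[(ii)] $\Gamma_{\mathcal S}=\{p\}$, and stereographic projection from $p$ gives $\Sigma\setminus\{p\}\cong\R^3$, which is contractible.
\item[(iii)] $\Gamma_{\mathcal S}=\{p,q\}$. Stereographic projection from $p$ sends $q$ to a point of $\R^3$, which we translate to the origin, giving $\R^3\setminus\{0\}$. Polar coordinates then give $\R^3\setminus\{0\}\cong S^2\times(0,\infty)$, and $S^2\cong\CP^1$. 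Since $\pi_1(S^2)=0$, the base is simply connected.
\end{itemize}
In each case the covering splits and the two conjugate real-analytic structures are globally defined.

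\textbf{Case (iv).} Here $\Gamma_{\mathcal S}$ is a round circle in $\Sigma\simeq S^3$. Its complement is an open solid torus, so $\pi_1(\Sigma\setminus\Gamma_{\mathcal S})\cong\Z$ and the covering-space argument no longer applies. The local statement still follows directly from Proposition~\ref{prop:local-branches-cr} on simply connected open sets $U$. To justify that there is no canonical global splitting, I would show the covering is nontrivial, i.e. has nontrivial monodromy around a small meridian loop linking $\Gamma_{\mathcal S}$.

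\textbf{Main obstacle: the monodromy in case (iv).} Near a point of $\Gamma_{\mathcal S}=D(\mathcal S)=D_0(\mathcal S)$, I would use the explicit model $\mathcal S_{0,1}$ of Proposition~\ref{prop:explicit-family-smooth-j-invariant-quadrics}, whose discriminant is $D_{0,1}\subset\Sigma$. For $(a,r)=(0,1)$ the fibre restriction $\alpha z_0^2+2\beta z_0z_1+\gamma z_1^2$ has
\[
\alpha=-p_1,\qquad \gamma=\overline p_1,\qquad 2\beta=1-|p_0|^2+|p_1|^2 .
\]
The two roots of this binary form are distinct off $D$, and along a small loop in $\Sigma$ around a point of $D$ I would check that the pair $(2\beta,p_1)$ winds once around $0\in\R\times\C$ in a suitable transverse slice. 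This should force the two roots to be interchanged, giving the $\Z/2$ monodromy. If this computation becomes cumbersome, a fallback is to argue that a global section would give a global choice of root, i.e. a nowhere-vanishing square root of the discriminant; the discriminant, however, vanishes to order two transversally along $D$ in a way incompatible with such a square root.
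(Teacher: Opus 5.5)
\textbf{Cases (i)--(iii).} These match the paper: $\Sigma\setminus\Gamma_{\mathcal S}$ is simply connected, so the two-sheeted covering is trivial, and Proposition~\ref{prop:local-branches-cr} applies with $U=\Sigma\setminus\Gamma_{\mathcal S}$.

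\textbf{Case (iv).} Here there is a genuine problem. The paper's proof only asserts that the covering carries no preferred trivialization; it does not claim nontrivial monodromy. The claim you set out to prove, $\Z/2$ monodromy around a meridian of $\Gamma_{\mathcal S}$, is false. So your ``main obstacle'' step would not merely be laborious; it would fail.

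\textbf{Why the covering is trivial.} $D(\mathcal S)$ is a circle in $S^4$, of codimension $3$, so $S^4\setminus D(\mathcal S)$ is simply connected. Since $D_1(\mathcal S)=\varnothing$, the map $\pi\colon\mathcal S\setminus\pi^{-1}(D(\mathcal S))\to S^4\setminus D(\mathcal S)$ is already a two-sheeted covering, hence trivial. Its restriction to $\Sigma\setminus\Gamma_{\mathcal S}\subset S^4\setminus D(\mathcal S)$ is therefore trivial too, even though $\pi_1(\Sigma\setminus\Gamma_{\mathcal S})\cong\Z$.

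\textbf{Check in your own model.} For $(a,r)=(0,1)$ one has $\alpha=-p_0p_1$ (not $-p_1$) and $\gamma=\overline p_0\overline p_1=-\overline\alpha$, with $\beta$ real. Set $R:=\sqrt{\beta^2+|\alpha|^2}$, which is positive off $D_{0,1}$. Then the root
\[
[z_0:z_1]=[\overline\alpha:R+\beta]=[R-\beta:\alpha]
\]
is well defined everywhere off $D_{0,1}$: the two expressions cannot degenerate simultaneously. It is therefore a real-analytic global section.

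\textbf{Where your two arguments break.}
\begin{itemize}
\item[(a)] Along the meridian $p_0=\sqrt{1-\varepsilon^2}$, $p_1=\varepsilon e^{it}$ in $\Sigma$, the two roots are $z_0/z_1\approx\mp e^{-it}$. Each makes a full turn and they are never exchanged. The ``winding'' of $(2\beta,\alpha)$ around $0\in\R\times\C\cong\R^3$ carries no $\pi_1$-information.
\item[(b)] The fallback fails because $\Delta=\beta^2+|\alpha|^2$ is positive off $D$ and has the global square root $R$. Its vanishing along $D$ is perfectly compatible with that square root.
\end{itemize}

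\textbf{Consequence.} Statement (iv) can only be established in the weak sense the paper's proof intends: no preferred labeling of the sheets. In fact the argument above shows that the two conjugate structures are globally defined on $\Sigma\setminus\Gamma_{\mathcal S}$ in case (iv) as well. It also yields (i)--(iii) uniformly, without the case-by-case check of simple connectivity.
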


\begin{proof}
Cases \rm(i)--\rm(iii) follow from
Proposition~\ref{prop:local-branches-cr}, since the manifolds
\[
S^3,\qquad S^3\setminus\{p\}\cong\R^3,\qquad
S^3\setminus\{p,q\}\cong S^2\times\R
\]
are simply connected, and therefore the two-sheeted covering splits globally.
Case \rm(iv) is intrinsically only local, since the covering over
\(\Sigma\setminus\Gamma_{\mathcal S}\) does not come equipped with a preferred
global trivialization.
\end{proof}

\begin{remark}
The \(CR\) structures arising in case \rm(i) of
Corollary~\ref{cor:global-cr-cases} deserve particular attention. Each section
\(s_\pm:S^3\to\mathcal S\) realizes \(S^3\) as a real-analytic hypersurface in
the complex surface \(\mathcal S\cong\CP^1\times\CP^1\), and the induced
\(CR\) structure is therefore the hypersurface \(CR\) structure determined by
this embedding, rather than the flat \(CR\) structure of
\(Q^{2,2}\subset\CP^3\). The relative position of the discriminant circle, equivalently the parameter
\(I>1\), produces a natural one-parameter family of such embeddings. It would
be interesting to determine for which values of \(I\) the resulting
\(CR\) structures are spherical, everywhere Levi-nondegenerate, or mutually
\(CR\)-equivalent. These intrinsic questions appear subtler than the ambient
\(\mathcal G_{j,Q^{2,2}}\)-classification, and we do not pursued them here.
\end{remark}
\begin{remark}
    In the case \(\Gamma_{\mathcal S}=\varnothing\), the construction yields a
one-parameter family of real-analytic \(CR\) structures on the smooth manifold
\(S^3\). At present, however, we do not identify these structures intrinsically
with Cartan's homogeneous models. Such an identification would require, in
addition, either a computation of the induced Cartan curvature, or a proof that
the resulting \(CR\) structures are homogeneous together with a comparison of
their structure equations with Cartan's list. In the latter direction, Porter's
classification of homogeneous \(CR\) \(3\)-folds \cite{Porter2021} equivariantly embedded in the
  hyperquadric shows that, among Levi-nondegenerate \(\mathfrak{su}(2)\)-models,
at most a special member can occur in \(Q^{2,2}\).
\end{remark}
\section*{Acknowledgements}
The authors were partially supported by PRIN 2022MWPMAB --- ``Interactions between Geometric Structures and Function Theories'', CUP: H53D23002040006, and by the group G.N.S.A.G.A. of I.N.d.A.M.\ . Moreover, the second author was supported by University of Parma through the action ``Bando di Ateneo 2025 per la ricerca''

\end{document}